\newcommand{\EDIT}[1]{{\color{black}#1}}
\newcommand{\EEDIT}[1]{{\color{black}#1}}
\newif\ifcomment
\newcommand{\comment}[1]{\ifcomment#1\fi}
\newlength{\currentparindent}
\newcommand{\@minipagerestore}{\setlength{\parindent}{\currentparindent}}
\newcommand{\nospacepunct}[1]{\makebox[0pt][l]{#1}}
\DeclareSymbolFontAlphabet{\mathbb}{AMSb}
\DeclareSymbolFontAlphabet{\mathbbl}{bbold}
\newcommand{\Prism}{{\mathlarger{\mathbbl{\Delta}}}}
\newcommand{\prism}{\mathbbl{\Delta}}
\DeclareMathOperator{\Spf}{Spf}
\DeclareMathOperator{\Spec}{Spec}
\DeclareMathOperator{\Pic}{Pic}
\DeclareMathOperator{\Cone}{Cone}
\DeclareMathOperator{\Perf}{Perf}
\DeclareMathOperator{\gr}{gr}
\DeclareMathOperator{\Fil}{Fil}
\DeclareMathOperator{\Vect}{Vect}
\DeclareMathOperator{\fib}{fib}
\DeclareMathOperator{\cofib}{cofib}
\DeclareMathOperator{\Map}{Map}
\DeclareMathOperator{\Rees}{Rees}
\let\Gauge\gauge
\DeclareMathOperator{\Tot}{Tot}
\newcommand{\colim}{\operatornamewithlimits{colim}}
\newcommand{\sHom}{\underline{\mathrm{Hom}}}
\newcommand{\Der}{\mathcal{D}\kern -.5pt er}
\newcommand{\tensorL}{\otimes^\mathbb{L}}
\newcommand\A{\mathbb{A}}
\newcommand\F{\mathbb{F}}
\newcommand\G{\mathbb{G}}
\newcommand\V{\mathbb{V}}
\newcommand\Z{\mathbb{Z}}
\newcommand\T{\mathbb{T}}
\newcommand\D{\mathcal{D}}
\newcommand\DF{\mathcal{DF}}
\let\O\cO
\let\H\calH
\let\T\calT
\newcommand\dHod{\slashed{D}}
\newcommand\dR{\mathrm{dR}}
\newcommand\N{\mathrm{N}}
\newcommand\HT{\mathrm{HT}}
\newcommand\Hod{\mathrm{Hod}}
\newcommand\can{\mathrm{can}}
\newcommand\fl{\mathrm{fl}}
\newcommand\nilp{\mathrm{nilp}}
\let\d\derivative
\newcommand\conj{\mathrm{conj}}
\let\epsilon\varepsilon
\let\phi\varphi
\let\ol\overline
\let\tensor\otimes
\let\cal\mathcal
\newtheorem{thm}{Theorem}[subsection]
\newtheorem{prop}[thm]{Proposition}
\newtheorem{lem}[thm]{Lemma}
\newtheorem{cor}[thm]{Corollary}
\theoremstyle{definition}
\newtheorem{defi}[thm]{Definition}
\newtheorem{rem}[thm]{Remark}
\newenvironment{ex}
  {\pushQED{\qed}\exx}
  {\popQED\endexx}
\numberwithin{equation}{subsection}
\date{\today}
\renewcommand{\address}[1]{\gdef\@address{#1}}
\renewcommand{\email}[1]{\gdef\@email{\url{#1}}}
\newcommand{\@endstuff}{\par\vspace{\baselineskip}\noindent\small
\begin{tabular}{@{}l}\scshape\@address\\\textit{E-mail address:} \@email\end{tabular}}
\title{A stacky comparison of the Nygaard and Hodge filtrations}
\author{Maximilian Hauck}
\address{Max-Planck-Institut f\"ur Mathematik, Vivatsgasse 7, 53111 Bonn, Germany}
\email{max.hauck01@gmail.com}
\begin{document}

\begin{abstract}
We use the approach to $p$-adic cohomology theories via stacks recently developed by Drinfeld and Bhatt--Lurie to formulate a stacky version of a comparison result between the Nygaard filtration on prismatic cohomology and the Hodge filtration on de Rham cohomology by Bhatt--Lurie and thereby also obtain a generalisation in the case of smooth and proper $p$-adic formal schemes which allows for coefficients in an arbitrary gauge. In the process, we develop a stacky approach to diffracted Hodge cohomology as introduced by Bhatt--Lurie which also captures the conjugate filtration and the Sen operator. In the appendix, we also introduce a stack computing the conjugate filtration on absolute Hodge--Tate cohomology.
\end{abstract}

\maketitle

\tableofcontents

\section{Introduction}

While the idea that one may compute the value of a cohomology theory attached to a scheme $X$ by instead computing the coherent cohomology of a suitably defined stack attached to $X$ goes back, in the case of de Rham cohomology, to work of Simpson in the 1990s, see \cite{Simpson} and \cite{Simpson2}, this approach has only recently found entrance into the field of $p$-adic Hodge theory and has been starting to be fully exploited in the course of the last few years with the formulation of prismatic cohomology in terms of stacks independently developed by Bhatt--Lurie and Drinfeld in \cite{APC}, \cite{PFS}, \cite{FGauges} and \cite{Prismatization}. Roughly speaking, \EDIT{similarly to how one can attach to any $p$-adic formal scheme $X$ its \emph{de Rham stack} $X^\dR$, which has the property that coherent cohomology of the structure sheaf $\O_{X^\dR}$ agrees with the ($p$-completed) de Rham cohomology of $X$ if $X$ is smooth,} they functorially attach a stack $X^\prism$ to any \EDIT{such} $X$ with the feature that coherent cohomology of the structure sheaf $\O_{X^\prism}$ agrees with the (absolute) prismatic cohomology of $X$ \EEDIT{if $X$ is $p$-quasisyntomic in the sense of \cite[Def. C.6]{APC}}; correspondingly, the stack $X^\prism$ is called the \emph{prismatisation} of $X$. Moreover, similarly to how the de Rham stack $X^\dR$ admits a filtered refinement $X^{\dR, +}$ over $\A^1/\G_m$ computing the Hodge filtration, they also introduce a filtered refinement $X^\N$ over $\A^1/\G_m$ of $X^\prism$ computing the (absolute) Nygaard filtration on prismatic cohomology. The aspects of this approach most relevant to this paper are shortly reviewed in Section \ref{sect:stacks}.

The upshot of this picture is twofold: First, various statements about prismatic cohomology and related cohomology theories now admit a ``geometric'' formulation; \EDIT{for example}, the comparison between prismatic cohomology and de Rham cohomology from \cite[Thm. 5.4.2]{APC} can be reinterpreted as saying that, for any smooth $p$-adic formal scheme $X$, there is a functorial isomorphism
\begin{equation*}
(X_{p=0})^\prism\cong X^\dR\;.
\end{equation*}
Second, the stacky formulation immediately furnishes natural categories of coefficients for the respective cohomology theories: these should just be the categories of quasi-coherent complexes, or perhaps perfect complexes, on the corresponding stack. E.g., the category $\D(X^\N)$ of \emph{gauges} on $X$ is a category of coefficients for Nygaard-filtered prismatic cohomology and, for any $E\in\D(X^\N)$, we can define the Nygaard-filtered prismatic cohomology of $X$ with coefficients in $E$ as just the (derived) pushforward of $E$ to $\A^1/\G_m$.

In this paper, we want to further advocate this philosophy by showing that the following comparison theorem between the Hodge and Nygaard filtrations of Bhatt--Lurie, see \cite[Prop.\ 5.5.12]{APC}, admits a ``stacky'' version and thereby also generalising the theorem to gauge coefficients:

\begin{thm}
\label{thm:nygaardhodge-motivation}
Let $X$ be a smooth qcqs $p$-adic formal scheme. Then there is a natural filtered comparison map
\begin{equation*}
\Fil^\bullet_\N R\Gamma_\prism(X)\rightarrow\Fil^\bullet_\Hod R\Gamma_\dR(X)
\end{equation*}
with the property that the induced maps
\begin{equation*}
R\Gamma_\prism(X)/\Fil^i_\N R\Gamma_\prism(X)\rightarrow R\Gamma_\dR(X)/\Fil^i_\Hod R\Gamma_\dR(X)
\end{equation*}
are $p$-isogenies for all $i\geq 0$. If \EEDIT{$0\leq i\leq p$}, they are in fact already isomorphisms integrally.
\end{thm}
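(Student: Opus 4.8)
The plan is to move everything to the stacky side and then reduce to associated graded pieces: realise the filtered comparison map as the map on coherent cohomology induced by a morphism of stacks over $\A^1/\G_m$, use the Rees dictionary to turn the assertions about the quotients $(-)/\Fil^i$ into assertions about the induced maps $\gr^j_\N\to\gr^j_\Hod$, and settle those using the stacky model of diffracted Hodge cohomology developed above, where the mismatch between the Nygaard and Hodge filtrations is governed by the Sen operator and, ultimately, by the $p$-adic valuation of a factorial. Concretely, I would first produce --- functorially in the smooth qcqs $p$-adic formal scheme $X$ --- a morphism of stacks $X^{\dR,+}\to X^\N$ over $\A^1/\G_m$ refining the de Rham comparison: on generic fibres it should be compatible with the isomorphism $(X_{p=0})^\prism\cong X^\dR$, and on the special fibre $\{0\}/\G_m$ it should be the corresponding map of graded stacks (whose cohomologies are the associated gradeds of the Hodge, resp.\ Nygaard, filtration). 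Pushing the structure sheaves forward along the structure maps to $\A^1/\G_m$ --- equivalently, forming the associated filtered complexes --- then produces the filtered map $\Fil^\bullet_\N R\Gamma_\prism(X)\to\Fil^\bullet_\Hod R\Gamma_\dR(X)$, with functoriality in $X$ and compatibility with localisation automatic. Since all the stacks in sight satisfy flat descent in $X$ (cf.\ Section~\ref{sect:stacks}), for the remaining claims I may assume $X=\Spf R$ with $R$ smooth over $\Z_p$; in fact the torsion bounds below depend only on the filtration degree, so no finiteness hypothesis on $X$ is really used.

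Next, the reduction to graded pieces. A morphism of filtered complexes $\Fil^\bullet M\to\Fil^\bullet N$ induces on each quotient a map $M/\Fil^i M\to N/\Fil^i N$ whose cofibre carries a finite filtration with graded pieces $\cofib(\gr^j M\to\gr^j N)$ for $0\le j\le i-1$; and an iterated extension of finitely many complexes each killed by a power of $p$ is again killed by a power of $p$, and vanishes if all its graded pieces do. Hence, writing $C_j:=\cofib\bigl(\gr^j_\N R\Gamma_\prism(X)\to\gr^j_\Hod R\Gamma_\dR(X)\bigr)$, it suffices to show that $C_j$ is killed by a power of $p$ depending only on $j$, and that $C_j=0$ for $0\le j\le p-1$; the cofibre of $R\Gamma_\prism(X)/\Fil^i_\N\to R\Gamma_\dR(X)/\Fil^i_\Hod$ then vanishes precisely when $i\le p$.

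Now for the graded pieces, which is where the work is. Here $\gr^j_\Hod R\Gamma_\dR(X)=R\Gamma(X,\Omega^j_{X/\Z_p})[-j]$ since $X$ is smooth, and the crucial input is the stacky description of $\gr^j_\N R\Gamma_\prism(X)$ via the conjugate-filtered, Sen-operator-equipped model of diffracted Hodge cohomology: the $j$-th graded piece should carry a conjugate filtration whose graded pieces are Breuil--Kisin twists of $R\Gamma(X,\Omega^k_X)[-k]$, and the comparison map should be identified with the passage to its top (untwisted) conjugate-graded quotient $R\Gamma(X,\Omega^j_X)[-j]$. Then $C_j$ is built from the lower conjugate-graded pieces $R\Gamma(X,\Omega^k_X)[-k]\{j-k\}$ with $1\le j-k\le j$; on such a piece the Breuil--Kisin twist $\{n\}$, $n=j-k\ge 1$, is computed by the Sen operator $\Theta$, which acts there by the scalar $\pm n$, so the piece is $\fib\bigl(R\Gamma(X,\Omega^k_X)[-k]\xrightarrow{\,n\,}R\Gamma(X,\Omega^k_X)[-k]\bigr)$, killed by $p^{v_p(n)}$ and zero when $n$ is a $p$-adic unit. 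Summing over $k$ gives that $C_j$ is killed by $p^{v_p(j!)}$ and vanishes for $j\le p-1$, which yields the theorem (and matches the relative statements of \cite{APC}). The main obstacle is precisely this step: one must set up the stacky diffracted Hodge cohomology --- with its conjugate filtration and Sen operator --- carefully enough to pin down both the description of $\gr^j_\N R\Gamma_\prism(X)$ and the identification of the comparison map with the top-piece projection, and then verify that $\Theta$ acts by the stated scalar on each conjugate-graded piece. The scalar is the point: the bound $p^{v_p(j!)}$ must be uniform in $X$, which is why one argues one conjugate-graded piece at a time, where $\Theta$ is a scalar, rather than with absolute Hodge--Tate cohomology as a whole, on which $\Theta$ ranges over an interval of ``eigenvalues''. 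The construction of the comparison morphism and the reduction to graded pieces are, by contrast, formal once the requisite stacks and the diffracted Hodge machinery are available.
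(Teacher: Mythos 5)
Your proposal is correct and follows essentially the same route as the paper: the comparison map is induced by the Hodge-filtered de Rham map $i_{\dR,+}\colon X^{\dR,+}\to X^\N$, the statement is reduced to the graded pieces, and each $\cofib(\gr^j_\N\to\gr^j_\Hod)$ is resolved by the conjugate filtration on (stacky) diffracted Hodge cohomology, on whose $k$-th graded piece the relevant operator $\Theta+j$ (alias $uD-i$ in the paper's notation) acts by the integer scalar $j-k$, giving the $p$-isogeny bound $p^{v_p(j!)}$ and integral vanishing exactly for $j\le p-1$, hence isomorphisms for $i\le p$. The only organisational difference is that the paper first proves the statement for arbitrary perfect gauges (\cref{prop:nygaardhodge-graded}, \cref{thm:nygaardhodge-main}) and then specialises to the twisted structure gauge $\H_\N(X)(i)$, whereas you argue directly for the structure sheaf; the key identifications you flag as the ``main obstacle'' are precisely the content of \cref{prop:fildhod-comparisonsen} and \cref{cor:nygaardhodge-fibreseq}.
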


\begin{rem}
Note that, in loc.\ cit., the integral version is only stated for $i<p$, but the proof given there in fact also works for the case $i=p$. We also note that the smoothness assumption can be removed via Kan extensions by \cite[Rem.\ 5.5.10]{APC}.
\end{rem}

Namely, the result we are going to prove is the following:

\begin{thm}
\label{thm:nygaardhodge-main}
There is a commutative square of stacks
\begin{equation*}
\begin{tikzcd}
\Z_p^\dR\ar[r]\ar[d, "i_\dR", swap] & \Z_p^{\dR, +}\ar[d, "i_{\dR, +}"] \\
\Z_p^\prism\ar[r, "j_\dR"] & \Z_p^\N
\end{tikzcd}
\end{equation*}
which is an almost pushout up to $p$-isogeny. \EDIT{More precisely,} for any $E\in\Perf(\Z_p^\N)$, it induces a pullback diagram
\begin{equation*}
\begin{tikzcd}
R\Gamma(\Z_p^\N, E)[\frac{1}{p}]\ar[r]\ar[d] & R\Gamma(\Z_p^\prism, j_\dR^*E)[\frac{1}{p}]\ar[d] \\
R\Gamma(\Z_p^{\dR, +}, i_{\dR, +}^*E)[\frac{1}{p}]\ar[r] & R\Gamma(\Z_p^\dR, i_\dR^*E)[\frac{1}{p}]\nospacepunct{\;.}
\end{tikzcd}
\end{equation*}
If the Hodge--Tate weights of $E$ (see \cref{defi:nygaardhodge-htweights}) are all at least \EEDIT{$-p$}, then the statement already holds integrally.
\end{thm}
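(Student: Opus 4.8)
The plan is to build the square from constructions already available, reduce the ``almost pushout'' assertion to a pullback statement for coherent cohomology with perfect coefficients, reduce that statement by d\'evissage on the coefficients to the Breuil--Kisin twists of the structure sheaf, and finally settle those cases using the Nygaard--Hodge comparison of \cref{thm:nygaardhodge-motivation}. First I would assemble the square: the vertical maps are the de Rham comparison maps recalled in \cref{sect:stacks} in their unfiltered and filtered incarnations, $i_{\dR,+}$ being a map over $\A^1/\G_m$, whereas the horizontal maps are the inclusions of the fibres over the open point $\G_m/\G_m\hookrightarrow\A^1/\G_m$ of $\Z_p^\N$ and of $\Z_p^{\dR,+}$, using the identifications $\Z_p^\prism\cong\Z_p^\N\times_{\A^1/\G_m}(\G_m/\G_m)$ and $\Z_p^\dR\cong\Z_p^{\dR,+}\times_{\A^1/\G_m}(\G_m/\G_m)$; the square commutes because the filtered de Rham comparison is compatible with passing to this fibre.

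Since all four corners are pushed forward to a point, the commuting square of stacks yields, for every $E\in\Perf(\Z_p^\N)$, a commuting square of complexes, and the content is that this square is cartesian -- after inverting $p$ in general, and already integrally once the Hodge--Tate weights of $E$ are at least $-p$. Each of the four functors $E\mapsto R\Gamma(-,(-)^*E)$ is exact, so the collection of $E$ for which the square is cartesian after inverting $p$ is closed under shifts, retracts and (co)fibres, and likewise for the integral statement within the subcategory of coefficients whose Hodge--Tate weights are at least $-p$. By \cref{defi:nygaardhodge-htweights} and the description of $\Perf(\Z_p^\N)$ recalled in \cref{sect:stacks}, this reduces the claim to the Breuil--Kisin twists $E=\O_{\Z_p^\N}\{n\}$, with the integral range $0\leq i\leq p$ of \cref{thm:nygaardhodge-motivation} translating precisely into the condition on the Hodge--Tate weights.

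For $E=\O_{\Z_p^\N}\{n\}$ I would compose $\Z_p^\N\to\A^1/\G_m$ with pushforward to the point and use that global sections over $\A^1/\G_m$ recover the $0$-th filtered piece, identifying the four vertices with the $\Fil^0$, respectively the underlying object, of the Nygaard-filtered prismatic and Hodge-filtered de Rham cohomology of $\Z_p$, re-indexed by the Breuil--Kisin weight; up to an overall twist the square then becomes
\begin{equation*}
\begin{tikzcd}
\Fil^{i}_\N R\Gamma_\prism(\Z_p)\ar[r]\ar[d] & R\Gamma_\prism(\Z_p)\ar[d] \\
\Fil^{i}_\Hod R\Gamma_\dR(\Z_p)\ar[r] & R\Gamma_\dR(\Z_p)\nospacepunct{\;,}
\end{tikzcd}
\end{equation*}
with $i$ the filtration degree read off from $n$, and being cartesian exactly when the map $R\Gamma_\prism(\Z_p)/\Fil^{i}_\N\to R\Gamma_\dR(\Z_p)/\Fil^{i}_\Hod$ is an equivalence. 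For $i\leq 0$ both filtrations are trivial in the relevant range, so this is an equivalence integrally; for $i\geq 1$ it is \cref{thm:nygaardhodge-motivation} applied to $X=\Spf\Z_p$, together with the remark following it which supplies the case $i=p$, hence a $p$-isogeny always and an integral equivalence once $i\leq p$.

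The step I expect to be the main obstacle is not this final reduction -- which is essentially bookkeeping on top of the known comparison -- but the passage from $\O$-coefficients to arbitrary perfect coefficients: making the d\'evissage of the second paragraph rigorous needs the structure theory of $\Perf(\Z_p^\N)$ together with the Hodge--Tate weight formalism of \cref{defi:nygaardhodge-htweights}, which in practice is provided by the stacky treatment of the conjugate filtration and the Sen operator developed in this paper, and which must yield both that the twists $\O\{n\}$ suffice and that the range of $n$ (equivalently of the filtration index $i$) controlled by the integral case of \cref{thm:nygaardhodge-motivation} is exactly ``Hodge--Tate weights at least $-p$''. One must also bear in mind throughout that $R\Gamma_\prism(\Z_p)$ is absolute prismatic cohomology, hence itself highly non-trivial, so that its Nygaard filtration is not a formality, and track carefully how each Breuil--Kisin twist translates into a shift of filtration indices under the structure maps involved.
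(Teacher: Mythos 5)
Your assembly of the square and the reduction to a cartesianness statement for coherent cohomology match the paper, and your final computation for $E=\O\{n\}$ is essentially fine. The fatal step is the d\'evissage: you need $\Perf(\Z_p^\N)$ to be generated, under shifts, retracts and cofibres, by the Breuil--Kisin twists $\O\{n\}$ (and compatibly with the Hodge--Tate weight bound), and nothing in the paper supplies such a statement. It is almost certainly false: already the open substack $\Z_p^\prism$ carries perfect complexes (e.g.\ prismatic crystals of rank $>1$) that are not iterated extensions of twists of the structure sheaf, and the whole point of \cref{cor:nygaardhodge-coeffs} is to treat gauges such as $\H_\N(X)$ that are far from being sums of line bundles. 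You flag this as ``the main obstacle'' but then assert it ``must yield'' the reduction; that is precisely the missing idea, and \cref{defi:nygaardhodge-htweights} gives no generation statement, only a way to read off weights.

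The paper's proof needs no such generation result: for an arbitrary perfect $E$ it identifies $R\Gamma(\Z_p^\N,E)$, $R\Gamma(\Z_p^\prism,j_\dR^*E)$, $R\Gamma(\Z_p^{\dR,+},i_{\dR,+}^*E)$ and $R\Gamma(\Z_p^\dR,i_\dR^*E)$ with $M^0$, $M^{-\infty}$, $\Fil^0F$ and $F$ for the associated filtered complexes, uses perfectness only to stabilise the filtrations, and thereby reduces to the graded statement \cref{prop:nygaardhodge-graded}. That proposition is proved via the description of $\D((\Z_p^\N)_{t=0})$ as graded $\Z_p\{u,D\}/(Du-uD-1)$-modules from \cref{lem:fildhod-complexeszpntzero}: the relevant fibre is $\fib(uD+1\colon\Fil_{-1}V\to\Fil_{-1}V)$, and since $uD-i$ acts by the scalar $-i$ on $\gr_iV$, one gets a $p$-isogeny for all $i\le -1$ and an isomorphism for $-(p-1)\le i\le -1$; the d\'evissage happens along the conjugate filtration of $E|_{(\Z_p^\N)_{t=0}}$, not along a decomposition of $E$ into twists. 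Note also that the paper's logical order is the reverse of yours: \cref{thm:nygaardhodge-motivation} is \emph{deduced} from \cref{thm:nygaardhodge-main} applied to the structure gauge. Citing the Bhatt--Lurie result as external input would not be circular, but your route would in any case only establish the theorem for twists of $\O$, which is not enough for the stated result or for \cref{cor:nygaardhodge-coeffs}.
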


This will then not only allow us to obtain a new proof of \cref{thm:nygaardhodge-motivation}, but also to establish the following version of \cref{thm:nygaardhodge-motivation} with coefficients in perfect gauges in the case where $X$ is proper:

\begin{cor}
\label{cor:nygaardhodge-coeffs}
Let $X$ be a $p$-adic formal scheme which smooth and proper over $\Spf\Z_p$. For any perfect gauge $E\in\Perf(X^\N)$, there is a natural pullback square
\begin{equation*}
\begin{tikzcd}
\EDIT{\Fil^0_\N R\Gamma_\prism(X, E)[\frac{1}{p}]}\ar[r]\ar[d] & R\Gamma_\prism(X, j_\dR^*E)[\frac{1}{p}]\ar[d] \\
\EDIT{\Fil^0_\Hod R\Gamma_\dR(X, i_{\dR, +}^*E)[\frac{1}{p}]}\ar[r] & R\Gamma_\dR(X, i_\dR^*E)[\frac{1}{p}]\nospacepunct{\;.}
\end{tikzcd}
\end{equation*}
If the Hodge--Tate weights of $E$ (see \cref{defi:nygaardhodge-htweights}) are all at least \EEDIT{$-p$}, then the statement already holds integrally.
\end{cor}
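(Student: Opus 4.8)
The plan is to deduce \cref{cor:nygaardhodge-coeffs} from \cref{thm:nygaardhodge-main} by a proper base change argument along the structure map $X\to\Spf\Z_p$. Since the assignments $X\mapsto X^\dR,X^{\dR,+},X^\prism,X^\N$ are functorial (see \cref{sect:stacks}), the square of \cref{thm:nygaardhodge-main} receives a compatible map from the analogous square built from $X$; write $\pi^\N\colon X^\N\to\Z_p^\N$, $\pi^\prism\colon X^\prism\to\Z_p^\prism$, $\pi^\dR$ and $\pi^{\dR,+}$ for the four induced maps. The resulting commutative cube should have all four of its vertical faces Cartesian — this is the relative form of the comparison $(X_{p=0})^\prism\cong X^\dR$ together with its Nygaard- and Hodge-filtered refinements. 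Unwinding definitions (and using that coherent cohomology of a stack over $\A^1/\G_m$ computes the $\Fil^0$-part of the associated filtered complex), the four corners of the square we must produce are exactly $R\Gamma(X^\N,E)=\Fil^0_\N R\Gamma_\prism(X,E)$, $R\Gamma(X^\prism,j_\dR^*E)=R\Gamma_\prism(X,j_\dR^*E)$, $R\Gamma(X^{\dR,+},i_{\dR,+}^*E)=\Fil^0_\Hod R\Gamma_\dR(X,i_{\dR,+}^*E)$ and $R\Gamma(X^\dR,i_\dR^*E)=R\Gamma_\dR(X,i_\dR^*E)$.

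Because $X$ is smooth and proper over $\Spf\Z_p$, the map $\pi^\N$ is proper, so $F:=\pi^\N_*E$ is a perfect gauge on $\Z_p^\N$ for every $E\in\Perf(X^\N)$ (this is the finiteness of Nygaard-filtered prismatic cohomology with perfect coefficients; cf.\ \cite{APC}, \cite{FGauges}). Applying \cref{thm:nygaardhodge-main} to $F$ produces a pullback square with corners $R\Gamma(\Z_p^\N,F)[\frac{1}{p}]$, $R\Gamma(\Z_p^\prism,j_\dR^*F)[\frac{1}{p}]$, $R\Gamma(\Z_p^{\dR,+},i_{\dR,+}^*F)[\frac{1}{p}]$ and $R\Gamma(\Z_p^\dR,i_\dR^*F)[\frac{1}{p}]$. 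It remains to identify each of these with its $X$-counterpart. The source is immediate: $R\Gamma(\Z_p^\N,\pi^\N_*E)=R\Gamma(X^\N,E)$. For each of the remaining three corners one invokes proper base change along the appropriate Cartesian face of the cube; for instance $j_\dR^*\pi^\N_*E\cong\pi^\prism_*j_\dR^*E$, whence $R\Gamma(\Z_p^\prism,j_\dR^*F)=R\Gamma(\Z_p^\prism,\pi^\prism_*j_\dR^*E)=R\Gamma(X^\prism,j_\dR^*E)$, and symmetrically for $i_{\dR,+}$ and for the composite governing $i_\dR$. All the maps involved are functorial in $E$, so the resulting pullback square is natural, which gives the rational assertion.

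For the integral refinement one must check that the Hodge--Tate weights of $F=\pi^\N_*E$ are all $\geq-p$ whenever those of $E$ are. This should follow from smoothness and properness of $X$: if $X$ has relative dimension $d\geq0$, the Hodge--Tate comparison (the conjugate filtration, whose graded pieces are built from $R\Gamma(X,\Omega^i_{X/\Z_p})$ with $0\leq i\leq d$) shows that pushing forward along $\pi^\N$ shifts Hodge--Tate weights only by non-negative amounts, so the bound $\geq-p$ is preserved; \cref{thm:nygaardhodge-main} then applies to $F$ integrally.

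I expect the two compatibility inputs above to be the main obstacle: first, verifying that the vertical faces of the cube are genuinely Cartesian, so that proper base change for these stacks is available — this requires carefully unwinding the relative definitions of the prismatization and the de Rham stack and of their filtered refinements; and second, making the Hodge--Tate weight estimate for $\pi^\N_*E$ precise in terms of \cref{defi:nygaardhodge-htweights} and its sign conventions. Both should be essentially formal once correctly set up, but this is where the genuine content of the reduction to \cref{thm:nygaardhodge-main} lies.
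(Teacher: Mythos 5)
Your proposal is correct and follows essentially the same route as the paper: push $E$ forward along $X^\N\rightarrow\Z_p^\N$ (using smoothness and properness to ensure the pushforward is perfect), apply \cref{thm:nygaardhodge-main} to the resulting gauge, identify the four corners via base change for the Cartesian squares, and verify via the explicit Higgs-complex description of the pushforward from the Hodge stack (\cref{rem:fildrstack-hodstackcoh}) that pushforward does not decrease the smallest Hodge--Tate weight. The paper's proof is a condensed version of exactly this argument.
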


In fact, we not only upgrade the conclusion of \cref{thm:nygaardhodge-motivation} to the language of stacks, but also the proof given in \cite[Prop.\ 5.5.12]{APC}. This uses the theory of \emph{diffracted Hodge cohomology} that was introduced by Bhatt--Lurie in \cite[§4.7]{APC}: \EDIT{For any bounded $p$-adic formal scheme $X$, its diffracted Hodge cohomology $R\Gamma_\dHod(X)$ is a derived $p$-complete complex of $\Z_p$-modules which can be regarded as a deformation of the de Rham cohomology of $X$. Indeed, if $X$ is smooth, after modding out $p$, the two agree up to Frobenius twists, see \cite[Rem. 4.7.18]{APC}; moreover, the complex $R\Gamma_\dHod(X)$ is naturally equipped with an ascending filtration $\Fil^\conj_\bullet R\Gamma_\dHod(X)$ called the \emph{conjugate filtration} with the property that, for smooth $X$, the associated graded pieces of the conjugate filtration on the diffracted Hodge cohomology of $X$ agree with the ones of the Hodge filtration on the de Rham cohomology of $X$ -- more precisely, they both identify with \EEDIT{the} Hodge cohomology \EEDIT{of $X$}, i.e., for any $n\in\Z$, we have
\begin{equation*}
\gr^\conj_n R\Gamma_\dHod(X)\cong R\Gamma(X, \widehat{\Omega}_{X/\Z_p}^n)[-n]\cong \gr^n_\Hod R\Gamma_\dR(X)\;,
\end{equation*}
where $\widehat{\Omega}_{X/\Z_p}$ denotes the $p$-completed cotangent sheaf of $X$ over $\Spf\Z_p$, as usual. Finally, the complex $R\Gamma_\dHod(X)$ is also equipped with an endomorphism $\Theta$ called the \emph{Sen operator}, which can be used to prove an integral refinement of the Deligne--Illusie theorem for smooth $X$ of dimension less than $p$, see \cite[Ex. 4.7.17, Rem. 4.7.18]{APC}. A brief review of the theory of diffracted Hodge cohomology is given in Section \ref{subsect:review-dhod}.} 

Thus, for any bounded $p$-adic formal scheme $X$, we functorially construct stacks $X^\dHod, X^{\dHod, c}$ and $(X^\N)_{t=0}$ which geometrise \EDIT{the diffracted Hodge cohomology $R\Gamma_\dHod(X)$ of $X$ together with its conjugate filtration and the Sen operator.} More precisely, we show:

\begin{thm}
\label{thm:intro-fildhod}
Let $X$ be a bounded $p$-adic formal scheme and assume that $X$ is $p$-quasisyntomic and qcqs.
Then the following are true:
\begin{enumerate}[label=(\roman*)]
\item The pushforward of $\O_{X^\dHod}$ along the map $X^\dHod\rightarrow \Z_p^\dHod\cong\Spf\Z_p$ identifies with $R\Gamma_\dHod(X)$.
\item The pushforward of $\O_{X^{\dHod, c}}$ along the map $X^{\dHod, c}\rightarrow\Z_p^{\dHod, c}\cong\A^1/\G_m$ identifies with $\Fil_\bullet^\conj R\Gamma_\dHod(X)$ under the Rees equivalence.
\item Under the equivalence
\begin{equation*}
\D((\Z_p^\N)_{t=0})\cong \widehat{\D}_{\gr, D-\nilp}(\Z_p\{u, D\}/(Du-uD-1))
\end{equation*}
\EDIT{from \cref{prop:fildhod-zpntzero}}, the underlying graded \EEDIT{$\Z_p\langle u\rangle$-complex} of the pushforward of $\O_{(X^\N)_{t=0}}$ along $(X^\N)_{t=0}\rightarrow (\Z_p^\N)_{t=0}$ identifies with $\Fil_\bullet^\conj R\Gamma_\dHod(X)$ under the Rees equivalence \EEDIT{and, under this identification, the operator 
\begin{equation*}
uD-i: \Fil_i^\conj R\Gamma_\dHod(X)\rightarrow \Fil_i^\conj R\Gamma_\dHod(X)
\end{equation*}
identifies with the Sen operator on $\Fil_i^\conj R\Gamma_\dHod(X)$ for all $i\in\Z$.} 
\end{enumerate}
\end{thm}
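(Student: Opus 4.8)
The plan is to reduce the statement, by quasisyntomic descent, to the case of a (bounded) quasiregular semiperfectoid ring, and there to match the pushforwards in question with explicit Rees complexes, using the known description of Nygaard-filtered prismatic cohomology together with Bhatt--Lurie's comparison between the Nygaard and conjugate filtrations. The stacks $X^\dHod$, $X^{\dHod,c}$, $(X^\N)_{t=0}$ are functorial in $X$ and satisfy quasisyntomic descent (as do $X^\prism$ and $X^\N$), and $R\Gamma_\dHod(-)$ together with its conjugate filtration and Sen operator is likewise defined by quasisyntomic descent; since a $p$-quasisyntomic qcqs formal scheme is covered in the quasisyntomic topology by bounded quasiregular semiperfectoid affines, we may thus take $X=\Spf A$ with $A$ such a ring. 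This is what makes the argument clean: over such $A$ the complexes $R\Gamma_\dHod(A)$, $\Fil^\conj_\bullet R\Gamma_\dHod(A)$, the Sen operator, and the coherent cohomologies of $X^\N$ and $(X^\N)_{t=0}$ are all static, so the Rees modules below are honestly $t$-torsion-free and the relevant base changes --- pullback along $B\G_m\hookrightarrow\A^1/\G_m$ and along the open point $\Spf\Z_p=\G_m/\G_m\hookrightarrow\A^1/\G_m$ --- hold on the nose.

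For parts (i), (ii) and the underlying graded complex in (iii) I would begin from the identification $R\Gamma(X^\N,\O)\cong\Rees\big(\Fil^\bullet_\N R\Gamma_\prism(X)\big)$ recalled in the introduction. Base-changing this along $t=0$, the pushforward of $\O_{(X^\N)_{t=0}}$ to $(\Z_p^\N)_{t=0}$ has, as underlying graded $\Z_p\langle u\rangle$-complex, $\bigoplus_i\gr^i_\N R\Gamma_\prism(X)$ with the Nygaard-multiplication maps raising degree. Bhatt--Lurie's comparison identifies each $\gr^i_\N R\Gamma_\prism(X)$, $\G_m^\sharp$-equivariantly, with $\Fil^\conj_i R\Gamma_\HT(X)$ up to a Breuil--Kisin twist $\{i\}$; and, by the construction of diffracted Hodge cohomology reviewed in \cref{subsect:review-dhod}, the conjugate-filtered diffracted Hodge complex is obtained from Hodge--Tate cohomology precisely by pulling back along the atlas of $\Z_p^\HT$, i.e.\ by forgetting the $\G_m^\sharp$-equivariance, under which the Breuil--Kisin twists become invisible. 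Hence this underlying graded complex is $\bigoplus_i\Fil^\conj_i R\Gamma_\dHod(X)=\Rees\big(\Fil^\conj_\bullet R\Gamma_\dHod(X)\big)$, which under the equivalence of \cref{prop:fildhod-zpntzero} is the graded part of (iii); part (ii) follows by the same argument with the $D$-structure (equivalently, the $\G_m^\sharp$-equivariance) forgotten throughout, and part (i) by restricting (ii) to the open point $t=1$ of $\A^1/\G_m$, using that $X^\dHod$ is the generic fibre of $X^{\dHod,c}\to\A^1/\G_m$ and that the conjugate filtration is exhaustive, so $\colim_i\Fil^\conj_i R\Gamma_\dHod(X)=R\Gamma_\dHod(X)$.

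It remains to identify the operator in (iii). The generator $D$ of $\Z_p\{u,D\}/(Du-uD-1)$ encodes the infinitesimal $\G_m^\sharp$-action built into $(\Z_p^\N)_{t=0}$, and $uD$ is the associated weight operator; on the degree-$i$ piece it differs from the Sen operator of $\Fil^\conj_i R\Gamma_\HT(X)$ precisely by the shift coming from the twist $\{i\}$ above, so that after forgetting the equivariance --- that is, on $\Fil^\conj_i R\Gamma_\dHod(X)$ --- the residual operator is $uD-i$. On the other hand the Sen operator on $R\Gamma_\dHod(X)$ is, by its construction (cf.\ \cref{subsect:review-dhod}), exactly this residual infinitesimal automorphism transported through the atlas of $\Z_p^\HT$; tracing both through the explicit presentation of $(\Z_p^\N)_{t=0}$ from \cref{prop:fildhod-zpntzero} shows that they agree on $\Fil^\conj_i R\Gamma_\dHod(X)$ for every $i$. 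Finally, $Du=uD+1$ gives $(uD-(i+1))\circ u=u\circ(uD-i)$, so $uD-i$ commutes with the transition maps $u\colon\Fil^\conj_i\to\Fil^\conj_{i+1}$ and descends to an operator on $\colim_i\Fil^\conj_i R\Gamma_\dHod(X)=R\Gamma_\dHod(X)$, which is the Sen operator there, compatibly with (i). All identifications above are natural in $A$, hence glue along the quasisyntomic cover.

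The step I expect to be the main obstacle is this last one: reconciling the Sen operator of Bhatt--Lurie, which is extracted from the $\G_m^\sharp$-gerbe structure of the Hodge--Tate stack, with the purely algebraically defined operator $uD$ on $(\Z_p^\N)_{t=0}$, and in particular keeping the Breuil--Kisin twists and weight shifts correctly bookkept across the Nygaard--conjugate comparison and the passage from Hodge--Tate to diffracted Hodge cohomology. The cohomological base-change subtleties that would be delicate for a general bounded $p$-adic formal scheme are, by contrast, sidestepped by the reduction to a quasiregular semiperfectoid base, over which every complex in sight is concentrated in degree $0$.
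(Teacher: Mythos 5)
There is a genuine gap in the central step. You assert that base-changing $R\Gamma(X^\N,\O)\cong\Rees(\Fil^\bullet_\N R\Gamma_\prism(X))$ to $t=0$ shows that the \emph{underlying graded $\Z_p\langle u\rangle$-complex} of $\pi_{X^\N,*}\O_{(X^\N)_{t=0}}$ is $\bigoplus_i\gr^i_\N R\Gamma_\prism(X)$. This conflates two different operations on $(\Z_p^\N)_{t=0}\cong\G_{a,+}/(\G_a^\sharp\rtimes\G_m)$: the underlying graded $\Z_p\langle u\rangle$-complex in the sense of \cref{lem:fildhod-complexeszpntzero} is the pullback along the cover $\A^1_+/\G_m\rightarrow(\Z_p^\N)_{t=0}$ (i.e.\ forgetting $D$), whereas the base change of the Rees module along $B\G_m\hookrightarrow\A^1_-/\G_m$ computes the \emph{pushforward} to $B\G_m$. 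These do not agree: by \cref{lem:nygaardhodge-rgammazpntzero} and \cref{cor:nygaardhodge-fibreseq}, the global sections of the $i$-th twist over $(\Z_p^\N)_{t=0}$ are $\fib(D\colon\Fil_iV\rightarrow\Fil_{i-1}V)$, not $\Fil_iV$ itself --- equivalently, $\gr^i_\N R\Gamma_\prism(X)$ is the $\Theta$-fixed points of $\Fil^\conj_iR\Gamma_\dHod(X)$ (suitably twisted), not its de-equivariantisation; this is exactly how the paper recovers \cite[Rem.\ 5.5.8]{APC}. Since knowing the twisted global sections $\gr^i_\N$ does not determine the object of $\D((\Z_p^\N)_{t=0})$, your ``hence'' does not follow, and you never actually produce the comparison map between $\pi_{X^{\dHod,c},*}\O_{X^{\dHod,c}}$ and $\Rees(\Fil^\conj_\bullet\widehat{\Omega}^\dHod_X)$ that any identification of filtered objects requires, even over a quasiregular semiperfectoid base where everything is static.

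The missing ingredient is the paper's route through the smooth case: there the conjugate filtration is by definition the canonical filtration, so one only has to check that $\pi_{X^{\dHod,c},*}\O_{X^{\dHod,c}}$ is $u$-complete with $n$-th graded piece concentrated in degree $n$, which is done via the $\V(\T_{X/\Z_p}(1))^\sharp$-gerbe $X^{\dHod,c}\rightarrow X\times\A^1_+/\G_m$ (\cref{lem:fildhod-comparisonfilteredsmooth}). Left Kan extension from the smooth case then \emph{constructs} the natural map $\Rees(\Fil^\conj_\bullet\widehat{\Omega}^\dHod_R)\rightarrow\pi_{R^{\dHod,c},*}\O_{R^{\dHod,c}}$ for quasiregular semiperfectoid $R$, which is checked to be an isomorphism on the strata $\G_m/\G_m$ and $B\G_m$ (\cref{prop:fildhod-dhodconjqrsp}); only then does quasisyntomic descent finish (ii), so your reduction to the qrsp case is the right \emph{last} step but cannot be the first. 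Your treatment of (i) via restriction to $\G_m/\G_m$ and of the relation $\Theta+n\leftrightarrow uD$ is in the right spirit (the paper's own argument for the Sen operator reduces to the associated graded, where both $uD-i$ and $\Theta$ act by $-i$), but as you acknowledge, the operator identification is left unresolved, and in any case it presupposes the filtered identification whose proof is the actual content.
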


While the diffracted Hodge stack $X^\dHod$ has already been introduced in \cite[Constr. 3.8]{PFS}, albeit without explicit mention of the comparison result (i) from \cref{thm:intro-fildhod}, to our current knowledge, neither the filtered refinement $X^{\dHod, c}$ nor the further refinement $(X^\N)_{t=0}$ \EEDIT{appear} anywhere in the literature yet and our study of these might be of independent interest.

In an earlier version of the material in this paper that appeared in the author's master's thesis \cite{Thesis}, the stack $(X^\N)_{t=0}$ was dubbed the ``conjugate-filtered Hodge--Tate stack'' because the author felt like it was somehow related to the conjugate filtration on the (absolute) Hodge--Tate cohomology of $X$, despite the fact that there was not even a map from $(X^\N)_{t=0}$ to $\A^1/\G_m$. However, during the preparation of this manuscript, we found a variant $X^{\HT, c}$ of $(X^\N)_{t=0}$ which actually \emph{does} admit a map to $\A^1/\G_m$ such that the (derived) pushforward of the structure sheaf to $\A^1/\G_m$ computes the conjugate filtration on the Hodge--Tate cohomology of $X$ and we have chosen to include the construction of $X^{\HT, c}$ together with a short proof of the aforementioned fact in the appendix.

\subsection*{Notation and conventions}

We freely make use of the language of $\infty$-categories in the style of Lurie, see \cite{HTT}, and of the theory of derived algebraic geometry as laid out in \cite{DAG}. In particular, we work derived throughout: e.g., all our pullbacks and pushforwards are in the derived sense, i.e.\ when we write $f_*$ for a map $f: \cal{X}\rightarrow\cal{Y}$ of schemes/formal schemes/stacks, we really mean the derived pushforward $Rf_*: \cal{D}(\cal{X})\rightarrow\cal{D}(\cal{Y})$.

All the stacks occurring in this paper are going to be in the fpqc topology on $p$-nilpotent rings and we point out that, for the purposes of this paper, it does not make a difference whether one works in the setting of stacks in groupoids or stacks in $\infty$-groupoids. By a quasi-coherent complex on a stack $\cal{X}$, we mean an object of the derived category $\D(\cal{X})$, which is defined via Kan extension from the affine case as in \cite[§3.2]{DAG}; the same applies to the full subcategories $\Vect(\cal{X})$ and $\Perf(\cal{X})$ of vector bundles and perfect complexes, respectively. 

We frequently make use of the Rees equivalence between the category of quasi-coherent complexes on $\A^1/\G_m$ and the category $\widehat{\DF}(\Z_p)$ of filtered objects in the category of derived $p$-complete $\Z_p$-complexes, see \cite[§2.2.1]{FGauges} or \cite{Moulinos} for an introduction to the Rees equivalence and \cite[§A.1]{DeltaRings}, specifically Ex.\ A.14 in loc.\ cit., for how the restriction to $p$-complete complexes arises. However, beware that our sign convention slightly differs from the one in \cite{FGauges}: for decreasing filtrations, the $\G_m$-action on $\A^1$ we use is given by placing the coordinate $t$ on $\A^1$ in grading degree $-1$ and we indicate this by denoting the quotient by $\A^1_-/\G_m$; consequently, we denote the universal generalised Cartier divisor on $\A^1_-/\G_m$ by $t: \O(1)\rightarrow\O$. This choice has the pleasant effect of removing the change of sign in the passage to the associated graded in \cite[Prop.\ 2.2.6.(3)]{FGauges}. Similarly, in the case of increasing filtrations, we use the notation $\A^1_+/\G_m$ to denote the quotient of $\A^1$ by the $\G_m$-action given by placing the coordinate on $\A^1$, which will be called $u$ this time, in grading degree $1$ and the universal section of $\A^1_+/\G_m$ is denoted $u: \O\rightarrow\O(1)$.

From time to time, we need base change statements for cartesian squares of the form
\begin{equation*}
\begin{tikzcd}
X^?\ar[r]\ar[d] & X^{??}\ar[d] \\
Y^?\ar[r] & Y^{??}
\end{tikzcd}
\end{equation*}
for $?, ??\in\{\dR, \prism, \N, \dots\}$ which are induced by a map $X\rightarrow Y$ of formal schemes. We will usually use these without further justification and refer to \cite[App.\ A]{EtaleCrystalline} for details regarding how to prove such results.

Throughout, $p$ is a fixed prime and $X$ denotes a bounded $p$-adic formal scheme. If $X=\Spf R$ is affine, we also use the notation $R^\dR, R^\prism, \dots$ to denote the stacks $(\Spf R)^\dR, (\Spf R)^\prism, \dots$.

\subsection*{Acknowledgements}

Most of the results in this paper first appeared in my master's thesis and I heartily want to thank my advisor Guido Bosco for his continued support, for many long and fruitful discussions, his constant willingness to answer all of my questions and for lots of helpful comments on an earlier version of the material presented here. This paper was prepared during my time as a PhD student at the Max Planck Institute for Mathematics in Bonn and I wish to thank the institute for its hospitality.

\section{Recollections on stacks and $p$-adic cohomology theories}
\label{sect:stacks}

We briefly remind the reader of the essential input from \cite{Prismatization}, \cite{APC}, \cite{PFS} and \cite{FGauges} needed for our purposes. For a more thorough introduction, we advise the reader to consult these sources directly.

\subsection{De Rham stacks}
\label{subsect:derham}

The idea of the de Rham stack and its filtered refinement goes back to Simpson, see \cite[§5]{Simpson}; for a reference in modern language with more of a focus on the $p$-adic story relevant to us, see \cite[Ch.\ 2]{FGauges}.

\begin{defi}
Over $\A^1_-/\G_m$, the canonical map \EEDIT{$\V(\O(1))^\sharp\rightarrow\G_a^\sharp\rightarrow\G_a$}, where $(-)^\sharp$ denotes the PD hull of the zero section, defines a 1-truncated animated $\G_a$-algebra stack
\begin{equation*}
\EEDIT{
\G_a^{\dR, +}\coloneqq\Cone(\V(\O(1))^\sharp\xrightarrow{\can}\G_a)\;.
}
\end{equation*}
The \emph{Hodge-filtered de Rham stack} $X^{\dR, +}$ of $X$ is the stack $\pi_{X^{\dR, +}}: X^{\dR, +}\rightarrow\A^1_-/\G_m$ defined by
\begin{equation*}
X^{\dR, +}(\Spec S\rightarrow\A^1_-/\G_m)\coloneqq \Map(\Spec\G_a^{\dR, +}(S), X)\;,
\end{equation*}
where the mapping space is computed in derived algebraic geometry. The base change of $X^{\dR, +}$ to $\G_m/\G_m\subseteq \A^1_-/\G_m$ is called the \emph{de Rham stack} of $X$ and denoted $\pi_{X^\dR}: X^\dR\rightarrow \Spf\Z_p$ while the base change $\pi_{X^\Hod}: X^\Hod\rightarrow B\G_m$ of $X^{\dR, +}$ to $B\G_m\subseteq \A^1_-/\G_m$ is called the \emph{Hodge stack} of $X$.
\end{defi}

\begin{rem}
\label{rem:fildrstack-hodstackcoh}
If $X$ is smooth and qcqs over $\Spf\Z_p$, one can describe $X^\Hod$ explicitly as
\begin{equation*}
X^\Hod\cong B_{X\times B\G_m} \V(\T_{X/\Z_p}(1))^\sharp\;,
\end{equation*}
see \cite[Rem.\ 2.5.9]{FGauges}. In particular, by the $\G_m$-equivariant version of \cite[Prop.\ 2.4.5]{FGauges}, this implies that giving a quasi-coherent complex on $X^\Hod$ amounts to specifying a graded quasi-coherent complex $V=\bigoplus_i V_i$ on $X$ equipped with a Higgs field $\Phi: V\rightarrow V\tensor\Omega_{X/\Z_p}^1$ which is locally nilpotent mod $p$ and decreases degree by $1$, i.e.\ the restriction of $\Phi$ to $V_i$ is equipped with a factorisation through $V_{i-1}\tensor\Omega_{X/\Z_p}^1$. The pushforward to $B\G_m$ of such a complex then identifies with the graded complex whose degree $i$ term is given by
\begin{equation*}
R\Gamma(X, \Tot(V_i\xrightarrow{\Phi} V_{i-1}\tensor\Omega_{X/\Z_p}^1\xrightarrow{\Phi} V_{i-2}\tensor\Omega_{X/\Z_p}^2\xrightarrow{\Phi}\dots))\;.
\end{equation*}
\end{rem}

As expected, coherent cohomology on the Hodge-filtered de Rham stack of $X$ computes \EDIT{the} Hodge-filtered ($p$-completed) de Rham cohomology of $X$ in good cases:

\begin{thm}
\label{thm:fildrstack-comparison}
Let $X$ be a \EEDIT{smooth qcqs} $p$-adic formal scheme and consider its Hodge-filtered de Rham stack $\pi_{X^{\dR, +}}: X^{\dR, +}\rightarrow\A^1_-/\G_m$. Then $\H_{\dR, +}(X)\coloneqq \pi_{X^{\dR, +}, *}\O_{X^{\dR, +}}$ identifies with $\Fil^\bullet_{\Hod} R\Gamma_\dR(X)$ in $\widehat{\DF}(\Z_p)$ under the Rees equivalence.
\end{thm}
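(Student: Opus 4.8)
The plan is to reduce to the affine case by descent and then invoke a filtered refinement of the Poincar\'e lemma for the PD-de Rham complex; this is essentially the strategy of \cite[Ch.~2]{FGauges}.

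First I would note that, viewed as functors into $\widehat{\DF}(\Z_p)$ through the Rees equivalence, both $X\mapsto\pi_{X^{\dR,+},*}\O_{X^{\dR,+}}$ and $X\mapsto\Fil^\bullet_\Hod R\Gamma_\dR(X)$ satisfy \v{C}ech descent along Zariski covers of $X$. For the latter this is immediate, since $\Fil^\bullet_\Hod R\Gamma_\dR(X)=R\Gamma(X,\widehat{\Omega}_{X/\Z_p}^{\geq\bullet})$ is global sections of a complex of sheaves with its stupid filtration. For the former, I would use that the formation of $X^{\dR,+}$ is compatible with Zariski (indeed \'etale) localisation on $X$ --- an $S$-point of $X^{\dR,+}$ factors through an open $U\subseteq X$ precisely when the underlying map $\Spec S\to X$ does, as $\G_a^{\dR,+}(S)=\Cone(\V(\O(1))^\sharp\to\G_a)(S)$ agrees with $S$ up to a PD-nilpotent modification --- so that $\pi_{X^{\dR,+},*}\O$ commutes with restriction to opens and hence with \v{C}ech descent along a finite affine cover, which exists as $X$ is qcqs. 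Thus I may assume $X=\Spf R$ with $R$ formally smooth over $\Z_p$, and, after a further \'etale localisation, that $R$ is \'etale over $\Z_p\langle t_1,\dots,t_d\rangle$.

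Next I would produce a natural filtered comparison map. Unwinding the construction, $X^{\dR,+}$ is, Zariski-locally on $X$, the quotient of $\Spf R\times(\A^1_-/\G_m)$ by the formal groupoid whose simplicial object of jets is the PD-completion of the \v{C}ech nerve of the diagonal of $\Spf R$, with the $k$-th divided power of the diagonal ideal placed in the weight matching $\Fil^k$; this is the geometric incarnation of the statement that, for an $\A^1_-/\G_m$-point $(\Spec S,\mathcal L,t\colon\mathcal L\to\O_S)$, the ring $\G_a^{\dR,+}(S)$ corepresents PD-thickenings of $S$ with conormal module a twist of $\mathcal L$. Taking $\O$ and totalising exhibits $\O_{X^{\dR,+}}$ as a cosimplicial object of $\D(\A^1_-/\G_m)$; under Rees it becomes a complete filtered complex receiving a canonical map from the constant filtration on $R$, and the filtered PD-Poincar\'e lemma promotes this to a map from $\Rees(\widehat{\Omega}_{R/\Z_p}^{\geq\bullet})$ --- here the coordinates are used, since the symbols $dt_i$ contribute weight one, so the de Rham differential raises weight by exactly one and the weight filtration on the PD-de Rham complex is precisely the stupid $=$ Hodge filtration. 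Globalising via the first step yields the desired comparison map $\Fil^\bullet_\Hod R\Gamma_\dR(X)\to\pi_{X^{\dR,+},*}\O_{X^{\dR,+}}$.

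Finally I would check this map is an equivalence by passing to associated gradeds: the Rees-side filtration is automatically complete and $\Fil^\bullet_\Hod R\Gamma_\dR(X)$ is a finite filtration, so it suffices to work on $\gr^\bullet$. Restriction along $B\G_m\hookrightarrow\A^1_-/\G_m$ identifies $\gr^\bullet\pi_{X^{\dR,+},*}\O$ with $\pi_{X^\Hod,*}\O_{X^\Hod}$, which by \cref{rem:fildrstack-hodstackcoh} is $\bigoplus_n R\Gamma(X,\widehat{\Omega}_{X/\Z_p}^n)[-n]=\gr^\bullet_\Hod R\Gamma_\dR(X)$, and the comparison map restricts to the identity there because at $t=0$ the PD-groupoid degenerates to the Higgs complex with the $dt_i$ still the weight-one generators. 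The step I expect to be the main obstacle is exactly this: constructing the \emph{filtered} comparison map and checking it is the identity on associated gradeds, since matching generic fibres and associated gradeds separately is not enough, and producing the filtered map forces one to describe $X^{\dR,+}$ explicitly via the PD-completed diagonal with its divided-power filtration sitting in the correct weights. The remaining ingredients --- qcqs-ness of $\pi_{X^{\dR,+}}$, flat base change along $\G_m/\G_m\hookrightarrow\A^1_-/\G_m$, and the base change used in the reduction to the coordinatised affine case --- are routine and may be cited from \cite[App.~A]{EtaleCrystalline}.
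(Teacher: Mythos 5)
The paper itself does not prove this statement: its ``proof'' is the single citation \cite[Thm.~2.5.6]{FGauges}. Your argument is essentially a reconstruction of the proof in that reference --- Zariski-localise to a framed smooth affine, present $X^{\dR,+}$ as the quotient of $X\times\A^1_-/\G_m$ by the Rees construction of the PD-envelope groupoid of the diagonal (with $J^{[k]}$ in the weight of $\Fil^k$), and conclude with the filtered PD-Poincar\'e lemma --- and your emphasis on having to construct the \emph{filtered} comparison map, rather than matching the underlying object and the associated graded separately, is exactly the right point to stress.

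One step is wrong as stated, however: the filtration on $\pi_{X^{\dR,+},*}\O_{X^{\dR,+}}$ is \emph{not} ``automatically complete''. Under the Rees equivalence, completeness of a filtration corresponds to derived $t$-completeness of the corresponding quasi-coherent complex on $\A^1_-/\G_m$, and this is not automatic for pushforwards: for instance, the pushforward of $\O$ along $\G_m/\G_m\hookrightarrow\A^1_-/\G_m$ is the constant (hence non-complete) filtration. Nor can completeness be read off levelwise from your \v{C}ech--Alexander presentation, since the PD-filtration $J^{[\bullet]}$ on a $p$-completed divided power envelope is itself not complete (the $p$-completion of $\bigoplus_n \Z_p x^{[n]}$ is strictly smaller than $\prod_n \Z_p x^{[n]}$). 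So reducing the final check to associated gradeds alone is not justified. The repair is cheap: the loci $\{t\neq 0\}$ and $\{t=0\}$ jointly detect equivalences in $\D(\A^1_-/\G_m)$ (this is the stratification argument used in the proof of \cref{prop:fildhod-dhodconjqrsp}), so in addition to the associated-graded check via \cref{rem:fildrstack-hodstackcoh} you should also verify your map after inverting $t$, i.e.\ the unfiltered identification $R\Gamma(X^\dR,\O_{X^\dR})\cong R\Gamma_\dR(X)$ --- which your PD-Poincar\'e lemma argument already produces. Alternatively, prove $t$-completeness of the pushforward directly from the gerbe description of $X^{\dR,+}$ over $X\times\A^1_-/\G_m$, exactly as the analogous $u$-completeness is established in the proof of \cref{lem:fildhod-comparisonfilteredsmooth}.
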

\begin{proof}
This is \cite[Thm.\ 2.5.6]{FGauges}.
\end{proof}

\begin{rem}
In particular, the above statement also implies that the pushforward $\H_\dR(X)$ of $\O_{X^\dR}$ to $\Spf\Z_p$ identifies with the ($p$-completed) de Rham cohomology $R\Gamma_\dR(X)$ of $X$. Similarly, the pushforward of $\O_{X^\Hod}$ to $B\G_m$ identifies as a graded object with the Hodge cohomology of $X$.
\end{rem}

Motivated by this result, we make the following definition:

\begin{defi}
Let $X$ be a \EEDIT{smooth qcqs} $p$-adic formal scheme. For a quasi-coherent complex $E\in\D(X^\dR)$, we define the \emph{de Rham cohomology} of $X$ with coefficients in $E$ as
\begin{equation*}
R\Gamma_\dR(X, E)\coloneqq \pi_{X^\dR, *}(E)\;.
\end{equation*}
Similarly, the \emph{Hodge-filtered de Rham cohomology} of $X$ with coefficients in a quasi-coherent complex $E\in\D(X^{\dR, +})$ is defined by
\begin{equation*}
\Fil^\bullet_\Hod R\Gamma_\dR(X, E)\coloneqq \pi_{X^{\dR, +}, *}(E)\;.
\end{equation*}
\end{defi}

\subsection{Filtered prismatisation}
\label{subsect:prismsyn}
 
We now describe a stacky approach to prismatic cohomology and the Nygaard filtration. For more details, the reader may consult \cite[Ch.s 4, 5]{FGauges}, but we also want to point out the recent reformulation of the construction of the Nygaard-filtered prismatisation given by Gardner--Madapusi in \cite[§6.4]{GardnerMadapusi}, which avoids the use of the notion of admissible $W$-modules recalled below.

\begin{defi}
\label{defi:prismatisation-cwdiv}
For a $p$-nilpotent ring $S$, a \emph{Cartier--Witt divisor} on $S$ is a generalised Cartier divisor $\alpha: I\rightarrow W(S)$ on $W(S)$ satisfying the following two conditions:
\begin{enumerate}[label=(\roman*)]
\item The \EDIT{ideal generated by the} image of the map $I\xrightarrow{\alpha} W(S)\rightarrow S$ \EDIT{is nilpotent}.
\item The image of the map $I\xrightarrow{\alpha} W(S)\xrightarrow{\delta} W(S)$ generates the unit ideal.
\end{enumerate}
Here, $\delta: W(S)\rightarrow W(S)$ is the usual $\delta$-structure on $W(S)$.
\end{defi}

\begin{defi}
The \emph{prismatisation} $X^\prism$ is the stack over $\Spf\Z_p$ given by assigning to a $p$-nilpotent ring $S$ the groupoid of pairs 
\begin{equation*}
(I\xrightarrow{\alpha} W(S), \Spec \cofib(I\xrightarrow{\alpha} W(S))\rightarrow X)\;,
\end{equation*}
where the first entry is a Cartier--Witt divisor on $S$ and the second entry is a morphism of derived formal schemes.
\end{defi}

\begin{ex}
Any Cartier--Witt divisor $I\xrightarrow{\alpha} W(S)$ yields a generalised Cartier divisor $I\tensor_{W(S)} S\rightarrow S$. Thus, we obtain a morphism of stacks
\begin{equation*}
X^\prism\rightarrow \A^1_-/\G_m\;.
\end{equation*}
The preimage of $B\G_m$ under this map is called the \emph{Hodge--Tate stack} of $X$ and denoted $X^\HT$.
\end{ex}

Similarly to the case of de Rham cohomology, we will use $\H_\prism(X)$ to denote the pushforward of $\O_{X^\prism}$ along $X^\prism\rightarrow\Z_p^\prism$. As before, in good situations, coherent cohomology of the structure sheaf on $X^\prism$ agrees with \EDIT{the} prismatic cohomology of $X$ computed via the absolute prismatic site:

\begin{thm}
\label{thm:prismatisation-comparison}
Let $X$ be a bounded $p$-adic formal scheme and \EEDIT{furthermore} assume that $X$ is $p$-quasisyntomic and qcqs. Then there is a natural isomorphism
\begin{equation*}
R\Gamma(X^\prism, \O_{X^\prism})\cong R\Gamma_\prism(X)\;.
\end{equation*}
\end{thm}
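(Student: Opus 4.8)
The plan is to reduce to the affine, quasiregular semiperfectoid case by descent and then to identify both sides with the prism of such a ring.

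First I would note that both $X \mapsto R\Gamma(X^\prism, \O_{X^\prism})$ and $X \mapsto R\Gamma_\prism(X)$ are \'etale (indeed Zariski) sheaves in $X$ --- for prismatisation this follows from the base change results referenced in \cite[App.\ A]{EtaleCrystalline} --- which reduces the claim to $X = \Spf R$ affine. Next I would construct a natural comparison map. Writing absolute prismatic cohomology as $R\Gamma_\prism(R) = R\lim_{(A, I) \in (R)_\prism} A$, observe that every object $(A, I)$ of the absolute prismatic site determines a map $\Spf A \to R^\prism$: its underlying Cartier--Witt divisor is the one attached to the prism $(A, I)$, and its structure morphism is the composite $\Spec(A/I) \to \Spf R$ coming from $(A, I) \in (R)_\prism$. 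Pulling back $\O_{R^\prism}$ along this map and taking (derived $p$-complete) global sections gives $R\Gamma(R^\prism, \O_{R^\prism}) \to A$, functorially in $(A, I)$; passing to the limit yields a natural map
\[
R\Gamma(R^\prism, \O_{R^\prism}) \longrightarrow R\lim_{(A, I) \in (R)_\prism} A = R\Gamma_\prism(R),
\]
and the task is to show this is an equivalence.

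For that I would invoke quasisyntomic descent: the right-hand side is a quasisyntomic sheaf in $R$ by the construction of absolute prismatic cohomology, and the left-hand side is one too because prismatisation carries a quasisyntomic cover $R \to R'$ to a cover of stacks whose \v{C}ech nerve is the prismatisation of the ($p$-completed) \v{C}ech nerve of $R \to R'$ --- again using the base change results of \cite[App.\ A]{EtaleCrystalline} --- combined with fpqc descent for quasi-coherent complexes. It then suffices to treat $R = S$ quasiregular semiperfectoid. In that case $R\Gamma_\prism(S)$ is discrete, equal to the absolute prism $\prism_S$ of $S$; moreover $\prism_S$ is a prism equipped with a map $S \to \prism_S/I$, so it defines a map $\Spf \prism_S \to S^\prism$ as above, and one checks that $S^\prism$ is affine and that this map is an isomorphism --- generalising $(\Spf P)^\prism \cong \Spf A_\inf(P)$ for a perfectoid ring $P$. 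Hence $R\Gamma(S^\prism, \O_{S^\prism}) \simeq \prism_S$ and, tracing through the construction, the comparison map is the identity.

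The main obstacle is precisely this last input: that $S^\prism$ is affine with structure sheaf the prism $\prism_S$ for quasiregular semiperfectoid $S$, and that prismatisation of a quasisyntomic cover is again a cover with the expected \v{C}ech nerve, so that the left-hand functor satisfies quasisyntomic descent. These are the substantive facts; the reductions and the construction of the comparison map are formal. Since the full statement is already established in \cite{Prismatization} and \cite{APC}, in the final text this theorem is simply cited from there.
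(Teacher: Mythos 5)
Your outline is correct and is essentially the argument underlying the references the paper invokes: the paper's proof consists of citing \cite[Cor.\ 8.17]{PFS} together with \cite[Thm.\ 4.4.30]{APC}, and those results are proved exactly by the reduction you describe (quasisyntomic descent on both sides, then the identification $S^\prism\cong\Spf\prism_S$ for $S$ quasiregular semiperfectoid, together with the comparison of the site-theoretic and descent-theoretic definitions of absolute prismatic cohomology). You correctly isolate the two substantive inputs and note that in the end the statement is simply cited, which is what the paper does.
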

\begin{proof}
Combine \cite[Cor.\ 8.17]{PFS} with \cite[Thm.\ 4.4.30]{APC}.
\end{proof}

\begin{defi}
Let $X$ be a bounded $p$-adic formal scheme which is $p$-quasisyntomic and qcqs. For a quasi-coherent complex $E\in\D(X^\prism)$, we define the \emph{prismatic cohomology} of $X$ with coefficients in $E$ as
\begin{equation*}
R\Gamma_\prism(X, E)\coloneqq R\Gamma(X^\prism, E)\;.
\end{equation*}
\end{defi}

To incorporate the Nygaard filtration into the picture, let $W$ be the ring scheme of $p$-typical Witt vectors, i.e.\ $W\cong\prod_{n\in\mathbb{N}} \Spec\Z[t_1, t_2, \dots]$ as schemes \EDIT{so that the functor of points of $W$ is given by $S\mapsto W(S)$ for any $p$-nilpotent ring $S$ via the Witt components}; if no confusion arises, we will also denote the base change of $W$ to another ring $S$ by the same letter. As usual, we denote the Frobenius and the Verschiebung by $F$ and $V$, respectively. We will consider the following classes of $W$-module schemes:

\begin{defi}
Let $S$ be a $p$-nilpotent ring and $M$ an affine $W$-module scheme over $S$. Then $M$ is called ...
\begin{enumerate}[label=(\roman*)]
\item ... an \emph{invertible $W$-module} if it is fpqc-locally on $R$ isomorphic to $W$ as a $W$-module. 

\item ... an \emph{invertible $F_*W$-module} if it is fpqc-locally on $R$ isomorphic to $F_*W$ as a $W$-module.

\item ... \emph{$\sharp$-invertible} if it is fpqc-locally on $R$ isomorphic to $\G_a^\sharp$ as a $W$-module.
\end{enumerate}
\end{defi}

In part (iii) of the above definition, $\G_a^\sharp$ is regarded as a $W$-module in the following manner:

\begin{lem}
\label{lem:admissible-fundamentalseq}
There is a unique isomorphism $W[F]\cong\G_a^\sharp$ lifting the composition $W[F]\subseteq W\rightarrow\G_a$, where $W[F]$ denotes the kernel of the Frobenius. In fact, there is a short exact sequence of $W$-module sheaves
\begin{equation*}
\begin{tikzcd}
0\ar[r] & \G_a^\sharp\ar[r] & W\ar[r] & F_*W\ar[r] & 0\nospacepunct{\;.}
\end{tikzcd}
\end{equation*}
\end{lem}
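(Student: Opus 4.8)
The plan is to reduce everything to properties of the Frobenius $F\colon W\to W$, proceeding in three steps: uniqueness of the isomorphism, faithful flatness of $F$, and the identification $W[F]\cong\G_a^\sharp$ itself.

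Uniqueness is the soft part. Write $D\coloneqq\O(\G_a^\sharp)=\bigoplus_{n\ge 0}\Z\cdot t^{[n]}$ for the divided-power polynomial ring; since $D$ is $\Z$-torsion free and $t^{[n]}$ is the unique element of $D$ with $n!\,t^{[n]}=t^{n}$, every $\Z[t]$-algebra automorphism of $D$ is the identity, so $\Aut_{\G_a}(\G_a^\sharp)=\{\id\}$ and there is at most one isomorphism $W[F]\cong\G_a^\sharp$ lying over the canonical maps to $\G_a$. Moreover, once the kernel of $F$ is identified with $\G_a^\sharp$, the short exact sequence follows formally: $F$ is $W$-linear for the twisted $W$-module structure $F_{*}W$ on its target, its scheme-theoretic kernel is $W[F]$, and $F$ is an epimorphism of fpqc sheaves precisely because it is faithfully flat.

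To see that $F$ is faithfully flat, use $FV=p$ to note that $F$ carries $V^{n}W$ into $V^{n-1}W$, hence descends to maps $\mathbf F_{n}\colon W_{n}=W/V^{n}W\to W_{n-1}$ on the truncated Witt vector schemes which assemble to $F$; it is therefore enough to show each $\mathbf F_{n}\colon\A^{n}\to\A^{n-1}$ is faithfully flat. Both source and target are regular, and one checks directly that every fibre of $\mathbf F_{n}$ has dimension $1=\dim\A^{n}-\dim\A^{n-1}$: over $\Q$, in ghost coordinates, $\mathbf F_{n}$ is the shift-and-forget map $(w_{0},\dots,w_{n-1})\mapsto(w_{1},\dots,w_{n-1})$, while modulo $p$ it is $(a_{0},\dots,a_{n-1})\mapsto(a_{0}^{p},\dots,a_{n-2}^{p})$, both with one-dimensional fibres; the miracle-flatness criterion then gives flatness, and surjectivity on geometric points (evident from either description) upgrades this to faithful flatness.

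The remaining step, identifying the kernel, is the one I expect to be the real obstacle. Write $W[F]=\Spec A$ with $A=\Z[x_{0},x_{1},\dots]/(F_{0},F_{1},\dots)$, where $F_{j}$ denotes the $j$-th Witt component of $F$; one has $F_{j}=p\,x_{j+1}+P_{j}(x_{0},\dots,x_{j})$ with $P_{j}\equiv x_{j}^{p}\pmod p$, so $A/p\cong\F_p[x_{0},x_{1},\dots]/(x_{0}^{p},x_{1}^{p},\dots)$, and $A$ is $\Z$-flat because $W[F]\to\Spec\Z$ is the base change of the flat map $F$ along the zero section. On the other side, $D/p\cong\F_p[u_{0},u_{1},\dots]/(u_{i}^{p})$ with $u_{i}=t^{[p^{i}]}$, by the standard structure of the mod-$p$ divided-power algebra, so $A/p$ and $D/p$ are abstractly isomorphic as $\F_p[t]$-algebras. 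The heart of the matter is to lift such an isomorphism to a $\Z[t]$-algebra map $\phi\colon A\to D$: starting from $\phi(x_{0})=t$, one solves $F_{j}(\phi(x_{0}),\dots,\phi(x_{j+1}))=0$ for $\phi(x_{j+1})$ recursively, and since $D$ is torsion free this only requires that $P_{j}(\phi(x_{0}),\dots,\phi(x_{j}))$ lie in $pD$ — which it does, because $P_{j}\equiv x_{j}^{p}\pmod p$, each $\phi(x_{i})$ lies in the augmentation ideal of $D$, and in $D/p$ the $p$-th power of an element of the augmentation ideal vanishes; at bottom this is the classical divisibility $(t^{[p^{i}]})^{p}\in pD\setminus p^{2}D$, i.e.\ the fact that $(p)\subseteq\Z_p$ carries divided powers. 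One then checks that $\phi$ reduces mod $p$ to an isomorphism (it is triangular with unit diagonal with respect to the generators $x_{j}$ and $u_{j}$), and a routine dévissage using the $\Z$-flatness of $A$ and $D$ shows $\phi$ becomes an isomorphism after $p$-completion; since all the stacks here live on $p$-nilpotent rings, this is exactly what is needed. (Alternatively, the identification $W[F]\cong\G_a^\sharp$, and with it the whole statement, can be quoted from the work of Drinfeld and of Bhatt--Lurie.)
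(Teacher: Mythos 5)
Your proposal is correct in substance, but it takes a genuinely different route from the paper, which offers no argument of its own and simply quotes \cite[Lem.\ 2.6.1, Rem.\ 2.6.2]{FGauges}. What you have written is essentially a self-contained reproof of that cited result: the rigidity of the divided power algebra (torsion-freeness plus $n!\,t^{[n]}=t^{n}$) is exactly the right mechanism for uniqueness, the reduction of exactness to faithful flatness of $F$ is the standard one, and the recursive construction of the comparison map $A\to D$ via the divisibility $d^{p}\in pD$ for $d$ in the augmentation ideal is the computational heart of the classical identification $W[F]\cong\G_a^\sharp$. Note also that the same rigidity argument (applied to maps out of $W[F]\times W[F]$, whose coordinate ring is again $p$-torsion-free) shows for free that your isomorphism is one of group schemes; this matters because the paper uses the lemma to \emph{define} the $W$-module structure on $\G_a^\sharp$, so no further compatibility needs to be checked. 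What your approach buys is independence from the literature; what the citation buys is brevity.

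Two steps are asserted more confidently than they are justified. First, faithful flatness of $F\colon W\to W$ does not formally follow from faithful flatness of the truncated maps $\mathbf F_{n}$: you are passing to a filtered colimit of flat ring maps $\O(W_{n-1})\to\O(W_{n})$ with varying source and target, and you should say why flatness survives — e.g.\ every finitely generated ideal of $\O(W)$ is extended from some $\O(W_{n-1})$, and $\O(W_{m})$ is flat over $\O(W_{n-1})$ for all $m\ge n$ because it is a polynomial (hence free) extension of $\O(W_{n})$; faithfulness is then checked on field-valued points using your two descriptions of the fibres. Second, ``triangular with unit diagonal'' requires the observation that $\phi(x_{j})\bmod p$ lies in the subalgebra of $D/p$ generated by $u_{0},\dots,u_{j}$ with the coefficient of $u_{j}$ a unit; this follows by induction from your recursion together with the valuation identity $v_{p}((np)!/(n!)^{p})=s_{p}(n)$, which pins down exactly when $(t^{[n]})^{p}/p$ is a unit multiple of $t^{[np]}$, and surjectivity is then obtained from Nakayama on the finite-dimensional truncations $\F_p[x_{0},\dots,x_{n}]/(x_{i}^{p})\to\F_p[u_{0},\dots,u_{n}]/(u_{i}^{p})$, with bijectivity from equality of dimensions. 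With these two points filled in, the argument is complete.
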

\begin{proof}
See \cite[Lem.\ 2.6.1, Rem.\ 2.6.2]{FGauges}.
\end{proof}

There is actually a very straightfoward classification of the three classes of $W$-modules introduced above:

\begin{lem}
\label{lem:admissible-classification}
Let $S$ be a $p$-nilpotent ring. 
\begin{enumerate}[label=(\roman*)]
\item The groupoid of invertible $W$-modules is equivalent to the Picard groupoid of $W(S)$ via the construction $L\mapsto L\tensor_{W(S)} W$ for $L\in\Pic(W(S))$, \EEDIT{where the sheaf $L\tensor_{W(S)} W$ is defined by
\begin{equation*}
(L\tensor_{W(S)} W)(T)\coloneqq L\tensor_{W(S)} W(T)
\end{equation*}
for any $S$-algebra $T$.}

\item The groupoid of invertible $F_*W$-modules is equivalent to the groupoid of invertible $W$-modules via the construction $M\mapsto F_*M$ for invertible $W$-modules $N$.

\item The category of $\sharp$-invertible modules is equivalent to the category of invertible $S$-modules via the construction $L\mapsto \V(L)^\sharp$ for $L\in\Pic S$.
\end{enumerate}
\end{lem}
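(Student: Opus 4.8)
The plan is to handle all three parts uniformly as \emph{classifications of forms}. In each case the objects on the left are, by definition, affine $W$-module schemes over $S$ that are fpqc-locally on $S$ isomorphic to one fixed $W$-module $\mathcal{M}$ — namely $W$, $F_*W$ and $\G_a^\sharp$ respectively — so by fpqc descent the groupoid (resp.\ category) of such objects is equivalent to the groupoid (resp.\ category) of $\mathcal{M}$-forms, which is controlled by the internal automorphism sheaf $\underline{\Aut}_{W\text{-mod}}(\mathcal{M})$ (resp., for the linear statements, by the internal endomorphism sheaf). Thus the real content is: (a) compute these internal (co)endomorphism sheaves; (b) match the resulting non-abelian $H^1$ with $\Pic$ of the appropriate ring; (c) check that the comparison functor exhibited in the statement is the obvious one (so that one does not merely get an abstract equivalence but the named one).

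For (i), Yoneda gives $\sHom_{W\text{-mod}}(W, W)\cong W$: a $W$-module map out of the free rank-one module is multiplication by the image of $1$, which may be an arbitrary section. Hence $\underline{\Aut}_{W\text{-mod}}(W)\cong W^\times=\G_m\circ W$, and more generally $\sHom_{W\text{-mod}}(L_1\tensor_{W(S)}W, L_2\tensor_{W(S)}W)\cong \Hom_{W(S)}(L_1,L_2)\tensor_{W(S)}W$ for $L_1,L_2\in\Pic(W(S))$; taking global sections over $\Spf S$ and using $\Gamma(\Spf S, N\tensor_{W(S)}W)\cong N$ for finite projective $N$ (reduce fpqc-locally on $S$ to $N$ free, where it follows from $\Gamma(\Spf S, W)=W(S)$) shows that $L\mapsto L\tensor_{W(S)}W$ is fully faithful. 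For essential surjectivity one recovers $L$ from an invertible $W$-module $M$ as $\Gamma(\Spf S, M)$ with its $\Gamma(\Spf S, W)=W(S)$-module structure, the point being that an $\underline{\Aut}_{W\text{-mod}}(W)=\G_m\circ W$-torsor on the fpqc site of $S$ corresponds to a $\G_m$-torsor — i.e.\ a line bundle — on $W(S)$.

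For (ii) it suffices to show that $F_*$ carries the local model $W$ to the local model $F_*W$ while inducing an \emph{isomorphism} on automorphism sheaves. Concretely I would compute $\sEnd_{W\text{-mod}}(F_*W)\cong W$ (multiplication by a section of $W$, acting on the underlying object $W$ of $F_*W$): apply $\sHom_{W\text{-mod}}(-,F_*W)$ to the fundamental exact sequence $0\to\G_a^\sharp\to W\xrightarrow{F}F_*W\to 0$ of \cref{lem:admissible-fundamentalseq}; the surjectivity of $F$ identifies $\sEnd_{W\text{-mod}}(F_*W)$ with the subsheaf of $\sHom_{W\text{-mod}}(W,F_*W)\cong W$ of sections annihilating $\G_a^\sharp=\ker F$, which is all of $W$ since $x\mapsto F(x)w$ already kills $\ker F$. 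One then checks that $F_*$ sends multiplication-by-$u$ on $W$ to multiplication-by-$u$ on $F_*W$, so the induced map $\G_m\circ W=\underline{\Aut}(W)\to\underline{\Aut}(F_*W)=\G_m\circ W$ is the identity; both sides of (ii) are therefore classified by the same gerbe, compatibly, giving the asserted equivalence. Part (iii) is parallel once one knows that the $W$-module structure on $\G_a^\sharp\cong W[F]$ is \emph{pulled back} along the Witt-component ring map $W\to\G_a=\cO$ (equivalently, $F$ acts by zero on $W[F]$, which is where one uses the precise description in \cref{lem:admissible-fundamentalseq}): then $\sEnd_{W\text{-mod}}(\G_a^\sharp)=\sEnd_{\cO\text{-mod}}(\G_a^\sharp)\cong\cO$ (an $\cO$-linear endomorphism of $\V(\cO)^\sharp$ is multiplication by a global function), so $\underline{\Aut}_{W\text{-mod}}(\G_a^\sharp)\cong\G_m$, and as in (i) one gets $\sHom_{W\text{-mod}}(\V(L_1)^\sharp,\V(L_2)^\sharp)\cong\Hom_S(L_1,L_2)$ — using $\Gamma(\Spf S,\V(M)^\sharp)=M$ — together with the fact that $\G_m$-torsors on $\Spf S$ are line bundles on $S$.

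I expect two points to be the real obstacles. The first is the precise identification of the internal (co)endomorphism sheaves: pinning down $\sEnd_{W\text{-mod}}(F_*W)$ from the fundamental sequence, and — especially — verifying that the $W$-action on $\G_a^\sharp$ genuinely factors through the zeroth Witt component, which needs one to be careful about the module-scheme structure of $\G_a^\sharp$ rather than arguing on points. The second is the descent bookkeeping in steps (b)/(c) relating a ``$W$-form over $S$'' to a ``line bundle over $W(S)$'': since $W(S'\tensor_S S')\not\cong W(S')\tensor_{W(S)}W(S')$ in general, one cannot literally transport an fpqc cover of $S$ to a cover of $W(S)$ and match Čech data, so identifying the relevant $H^1$'s requires faithfully flat descent along $W(S)\to W(S')$ directly (or a genuine comparison of sites); this is the step I would want to be most careful about.
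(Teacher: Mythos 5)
The paper does not actually prove this lemma --- it simply cites \cite[Constr.\ 5.2.2]{FGauges} --- so any self-contained argument is by definition ``a different route''; that said, your torsor-theoretic skeleton (compute $\sEnd$ of the local model, identify forms with torsors under the automorphism sheaf, match the resulting $H^1$ with $\Pic$) is essentially how the cited reference and \cite[\S 3.2]{APC} proceed. Your computations of the endomorphism sheaves are correct and contain the right key observations: $\sHom_{W}(W,W)\cong W$ by freeness; every $W$-linear map $W\to F_*W$ automatically kills $W[F]$, whence $\sEnd_W(F_*W)\cong W$; and the $W$-action on $\G_a^\sharp\cong W[F]$ factors through $W\to\G_a$ because $V(x)\cdot y=V(x\,F(y))=0$ for $y\in W[F]$. (Two small slips: in (iii) the identity you need is $\Gamma(\Spf S,\V(M))=M$ for the Hom-\emph{sheaf} $\V(L_1^{-1}\tensor L_2)$, not $\Gamma(\Spf S,\V(M)^\sharp)=M$, which is false --- sections of $\V(M)^\sharp$ carry divided-power data and do not recover $M$; and the identification $\sEnd_{\G_a}(\G_a^\sharp)\cong\G_a$ is not quite ``obvious'' from $\V(\cO)^\sharp$ being a line --- it needs Cartier duality with $\widehat{\G}_a$ or an explicit computation with the PD polynomial algebra, as in \cite[Prop.\ 2.4.4]{FGauges}.)

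The genuine gap is exactly the step you flagged in part (i), and your proposed repair would not work. Identifying $W^\times$-torsors on $(\Spf S)_{\mathrm{fpqc}}$ with $\Pic(W(S))$ cannot be done by ``faithfully flat descent along $W(S)\to W(S')$'': the Witt vector functor does not preserve (faithful) flatness of ring maps in general, and even when it does, $W(S'\tensor_S S')\neq W(S')\tensor_{W(S)}W(S')$ means the two \v{C}ech complexes do not match, so there is no comparison of descent data to run. The argument that actually closes this step is a d\'evissage: the first Witt component gives a short exact sequence of fpqc sheaves of groups $1\to(1+VW)\to W^\times\to\G_m\to 1$, the kernel is, modulo the $p$-adic filtration, an inverse limit of vector groups (using $V(x)V(y)=V(pxy)$ and $p$-nilpotence of $S$), so it has vanishing higher cohomology on affines; hence $H^1_{\mathrm{fpqc}}(\Spec S, W^\times)\cong\Pic(S)$, while on the other side the same nilpotence shows the ideal $VW(S)$ is topologically nilpotent with $W(S)$ complete along it, so $\Pic(W(S))\cong\Pic(S)$ as well, compatibly with $L\mapsto L\tensor_{W(S)}W$. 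Without some version of this computation, essential surjectivity in (i) --- and hence, via your reduction, in (ii) --- is not established.
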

\begin{proof}
See \cite[Constr. 5.2.2]{FGauges}.
\end{proof}

Note that, for $L\in\Pic(W(S))$, the sheaf $L\tensor_{W(S)} W$ is representable by the scheme $\Spec(A\tensor \Z[t_1, t_2, \dots])$, where $A$ is the ring of global sections of the vector bundle $\V(L)$, and that, for $L\in\Pic S$, the scheme $\V(L)^\sharp$ is actually affine since it is a $\G_a^\sharp$-torsor over $\Spec S$ and $\G_a^\sharp$ is affine.

\begin{ex}
\label{ex:admissible-cwdivtoinvertible}
Any Cartier--Witt divisor $I\xrightarrow{\alpha} W(S)$ gives rise to a morphism of invertible $W$-module schemes $M\xrightarrow{d} W$ over $S$ by \cref{lem:admissible-classification}.
\end{ex}

\begin{defi}
An \emph{admissible} $W$-module is an affine $W$-module scheme $M$ which (as a sheaf) can be written as an extension of an invertible $F_*W$-module by a $\sharp$-invertible $W$-module.
\end{defi}

\begin{rem} One can show that if a $W$-module $M$ is admissible, then the exact sequence witnessing its admissibility is unique, see \cite[Rem.\ 5.2.5]{FGauges}; moreover, the association sending $M$ to this exact sequence is functorial. We call this the \emph{admissible sequence} of $M$ and denote it by
\begin{equation*}
\begin{tikzcd}
0\ar[r] & \V(L_M)^\sharp\ar[r] & M\ar[r] & F_*M'\ar[r] & 0\nospacepunct{\;.}
\end{tikzcd}
\end{equation*}
In particular, this shows that being admissible is an fpqc-local property and hence any invertible $W$-module is admissible by \cref{lem:admissible-fundamentalseq}.
\end{rem}

\begin{defi}
A \emph{filtered Cartier--Witt divisor} on a $p$-nilpotent ring $S$ consists of an admissible $W$-module $M$ and a map $d: M\rightarrow W$ such that the induced map $F_*M'\rightarrow F_*W$ of associated invertible $F_*W$-modules comes from a Cartier--Witt divisor on $S$ via the construction of \cref{ex:admissible-cwdivtoinvertible}. We denote the groupoid of filtered Cartier--Witt divisors on $S$ by $\Z_p^\N(S)$ and this defines a stack $\Z_p^\N$ on $p$-nilpotent rings.
\end{defi}

\begin{ex}
\label{ex:filteredprism-drmap}
Any generalised Cartier divisor $L\xrightarrow{t} S$ yields a filtered Cartier--Witt divisor
\begin{equation*}
\V(L)^\sharp\oplus F_*W\xrightarrow{(t^\sharp, V)} W\;.
\end{equation*}
Thus, we obtain a morphism of stacks
\begin{equation*}
i_{\dR, +}: \A^1_-/\G_m\rightarrow\Z_p^\N
\end{equation*}
called the \emph{Hodge-filtered de Rham map}. We warn the reader that, despite the notation, the map $i_{\dR, +}$ is \emph{not} a closed immersion.
\end{ex}

Given a filtered Cartier--Witt divisor $M\xrightarrow{d} W$, we obtain an induced map of admissible sequences
\begin{equation*}
\begin{tikzcd}
0\ar[r] & \V(L_M)^\sharp\ar[d, "\sharp(d)"]\ar[r] & M\ar[d, "d"]\ar[r] & F_*M'\ar[d, "F_*(d')"]\ar[r] & 0 \\
0\ar[r] & \G_a^\sharp\ar[r] & W\ar[r] & F_*W\ar[r] & 0
\end{tikzcd}
\end{equation*}
by functoriality.

\begin{defi}
Fix a filtered Cartier--Witt divisor $M\xrightarrow{d} W$ on a $p$-nilpotent ring $S$. The map $\sharp(d)$ uniquely has the form $t(d)^\sharp$ for some $t(d): L_M\rightarrow S$ by \cref{lem:admissible-classification}. Hence, sending $d$ to $t(d)$ defines a map of stacks
\begin{equation*}
t: \Z_p^\N\rightarrow\A^1_-/\G_m
\end{equation*}
called the \emph{Rees map}.
\end{defi}

\begin{rem}
\label{rem:filprism-jdr}
One can show that the preimage of $\G_m/\G_m\subseteq\A^1_-/\G_m$ under the Rees map identifies with $\Z_p^\prism$, see \cite[Constr. 5.3.5]{FGauges}, and hence we obtain an open immersion
\begin{equation*}
j_\dR: \Z_p^\prism\rightarrow\Z_p^\N\;.
\end{equation*}
\end{rem}

\begin{defi}
The construction carrying a filtered Cartier--Witt divisor $M\xrightarrow{d} W$ on a $p$-nilpotent ring $S$ to 
$R\Gamma_\fl(\Spec S, \Cone(M\xrightarrow{d} W))$ yields a $1$-truncated animated $W$-algebra stack $\G_a^\N\rightarrow\Z_p^\N$ by \cite[Prop.s 5.3.8, 5.3.9]{FGauges}. The \emph{Nygaard-filtered prismatisation} $X^\N$ of $X$ is the stack $\pi_{X^\N}: X^\N\rightarrow\Z_p^\N$ defined by
\begin{equation*}
X^\N(\Spec S\rightarrow\Z_p^\N)\coloneqq \Map(\Spec\G_a^\N(S), X)\;,
\end{equation*}
where the mapping space is computed in derived algebraic geometry.
\end{defi}

One can check that the open immersion $j_\dR: \Z_p^\prism\rightarrow\Z_p^\N$ induces an open immersion $j_\dR: X^\prism\rightarrow X^\N$. Moreover, observe that, over $\A^1_-/\G_m$, there is an equivalence
\begin{equation}
\label{eq:filprism-drmap}
\EEDIT{
\Cone(\V(\O(1))^\sharp\oplus F_*W\xrightarrow{(t^\sharp, V)} W)\cong\Cone(\V(\O(1))^\sharp\xrightarrow{t^\sharp} \G_a)
}
\end{equation}
and hence, we obtain a map of stacks
\begin{equation*}
i_{\dR, +}: X^{\dR, +}\rightarrow X^\N
\end{equation*}
living over the map $\A^1_-/\G_m\rightarrow\Z_p^\N$ from \cref{ex:filteredprism-drmap}, which we also call the \emph{Hodge-filtered de Rham map}. Restricting $i_{\dR, +}$ to $\G_m/\G_m\subseteq\A^1_-/\G_m$ and $B\G_m\subseteq\A^1_-/\G_m$, respectively, yields morphisms of stacks
\begin{equation*}
i_\dR: X^\dR\rightarrow X^\N\;, \hspace{0.5cm} i_\Hod: X^\Hod\rightarrow X^\N
\end{equation*}
called the \emph{de Rham map} and the \emph{Hodge map}, respectively.

In good situations, coherent cohomology on the \EEDIT{Nygaard-filtered prismatisation} of $X$ computes \EDIT{the} Nygaard-filtered prismatic cohomology of $X$ (here, we use the definition of the Nygaard filtration on absolute prismatic cohomology from \cite[Def. 5.5.3]{APC}):

\begin{thm}
\label{thm:filprism-comparison}
Let $X$ be a bounded $p$-adic formal scheme. Assume that $X$ is $p$-quasisyntomic and qcqs. Then $t_{X, *}\O_{X^\N}$ identifies with $\Fil^\bullet_\N R\Gamma_\prism(X)$ in $\widehat{\DF}(\Z_p)$ under the Rees equivalence. Here, the \emph{Rees map} $t_X$ is the composition of $\pi_{X^\N}: X^\N\rightarrow\Z_p^\N$ with $t: \Z_p^\N\rightarrow\A^1_-/\G_m$.
\end{thm}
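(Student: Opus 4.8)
The plan is to mimic the proof of \cref{thm:prismatisation-comparison}: reduce to the affine quasiregular semiperfectoid case by descent, then bootstrap the filtered statement from the unfiltered one by inspecting the underlying object and the associated graded under the Rees equivalence. Both functors $X\mapsto\Fil^\bullet_\N R\Gamma_\prism(X)$ and $X\mapsto t_{X, *}\O_{X^\N}$ send quasisyntomic covers to limits — the former by its definition in \cite[Def.\ 5.5.3]{APC}, the latter because the formation of $X^\N$ commutes with base change in $X$ (\cite[App.\ A]{EtaleCrystalline}) and quasi-coherent cohomology satisfies flat descent — so one reduces to $X=\Spf R$ with $R$ quasiregular semiperfectoid.

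For such $R$ one constructs a natural comparison map $t_{R, *}\O_{R^\N}\to\Fil^\bullet_\N R\Gamma_\prism(R)$ from the explicit description of $R^\N$ over $\A^1_-/\G_m$ provided by \cref{lem:admissible-classification} and the construction of $\G_a^\N$ (compare \cite[Constr.\ 5.3.5]{FGauges}), and then checks it is an equivalence after applying the Rees equivalence by verifying this on the underlying object and on the associated graded; since the Nygaard filtration is constant in nonpositive degrees, a filtered map inducing an equivalence on all graded pieces and on the underlying object is itself an equivalence, so this suffices. On the underlying object — restriction along $\G_m/\G_m\hookrightarrow\A^1_-/\G_m$ — one uses that $R^\prism$ is the preimage of $\G_m/\G_m$ inside $R^\N$ (\cref{rem:filprism-jdr}); base change along the resulting cartesian square together with \cref{thm:prismatisation-comparison} identifies the restriction with $R\Gamma_\prism(R)=\Fil^0_\N R\Gamma_\prism(R)$.

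The crux is the associated graded, obtained by restriction along $B\G_m\hookrightarrow\A^1_-/\G_m$: one must match the pushforward to $B\G_m$ of $\O_{(R^\N)_{t=0}}$ with $\gr^\bullet_\N R\Gamma_\prism(R)$. Here I would combine the Bhatt--Lurie computation of $\gr^\bullet_\N R\Gamma_\prism$ in terms of the conjugate filtration on (diffracted) Hodge--Tate cohomology with an explicit analysis of $(R^\N)_{t=0}$ and its structure sheaf — for quasiregular semiperfectoid $R$ everything in sight is discrete, so this becomes a computation with honest rings built from the admissible sequence of the universal filtered Cartier--Witt divisor. Pinning $(R^\N)_{t=0}$ down precisely enough for the conjugate filtration to appear is the main obstacle; it is essentially the affine case of \cref{thm:intro-fildhod}, and the cleanest route is to deduce it from the study of $X^{\dHod, c}$ and $(X^\N)_{t=0}$ carried out later in the paper. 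Once this and the compatibility of the comparison map with the identifications over $t\neq0$ and $t=0$ are checked — a diagram chase — quasisyntomic descent yields the statement for arbitrary $p$-quasisyntomic qcqs $X$; alternatively, with the affine case in hand one may simply invoke the comparison of \cite{FGauges}, just as \cref{thm:prismatisation-comparison} combines \cite[Cor.\ 8.17]{PFS} with \cite[Thm.\ 4.4.30]{APC}.
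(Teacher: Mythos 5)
Your overall skeleton --- quasisyntomic descent to the quasiregular semiperfectoid case, then the comparison there --- is exactly the paper's strategy, and your closing sentence is in fact the paper's \emph{entire} proof: it simply combines \cite[Cor.\ 5.5.11, Rem.\ 5.5.18]{FGauges} with \cite[Cor.\ 5.5.21]{APC} via quasisyntomic descent, with no independent argument in the qrsp case. The bulk of your proposal, a from-scratch verification for qrsp $R$ by checking the underlying object over $\G_m/\G_m$ and the associated graded over $B\G_m$, is a legitimate alternative, but it is where all the real content sits and two points there need more care than you give them. First, the deduction ``equivalence on the underlying object and on all graded pieces implies equivalence of filtered objects'' requires both filtrations to stabilise in nonpositive degrees; for the algebraic Nygaard filtration this holds by definition, but for $t_{R,*}\O_{R^\N}$ it is a statement about the Hodge--Tate weights of the structure gauge that must be verified, not assumed. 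Second, the restriction of $t_{R,*}\O_{R^\N}$ to $B\G_m$ is \emph{not} the conjugate-filtered diffracted Hodge cohomology itself but the fibre of the operator $D$ on it (cf.\ \cref{cor:nygaardhodge-fibreseq}), i.e.\ genuinely Hodge--Tate data involving the Sen operator, so you would need the full strength of \cref{prop:fildhod-comparisonsen} together with the identification of $\gr^\bullet_\N$ as in \cite[Rem.\ 5.5.8]{APC}; those results appear later in the paper and, while they do not logically depend on \cref{thm:filprism-comparison} (so there is no circularity), this forward dependence is why the citation route you mention at the end is the cleaner one and is the one the paper takes.
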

\begin{proof}
This follows from \cite[Cor.\ 5.5.11, Rem.\ 5.5.18]{FGauges} and \cite[Cor.\ 5.5.21]{APC} via quasisyntomic descent.
\end{proof}

The category $\D(X^\N)$ provides a sensible notion of coefficients for Nygaard-filtered prismatic cohomology.

\begin{defi}
The category $\D(X^\N)$ is called the category of \emph{gauges} on $X$ and denoted $\Gauge_\prism(X)$. If $X=\Spf R$ is affine, we also write $\Gauge_\prism(R)$ in place of $\Gauge_\prism(\Spf R)$.
\end{defi}

\begin{ex}
By pushforward along the morphism $\pi_{X^\N}: X^\N\rightarrow\Z_p^\N$, we obtain a gauge $\H_\N(X)\coloneqq \pi_{X^\N, *}\O_{X^\N}$, which we call the \emph{structure gauge} of $X$. 
\end{ex}

\begin{defi}
\label{defi:nygaardhodge-htweights}
Let $X$ be a bounded $p$-adic formal scheme and $E\in\D(X^\N)$ a gauge on $X$. Then the pullback of $E$ along
\begin{equation*}
\EEDIT{
\begin{tikzcd}[ampersand replacement=\&, column sep=large]
X\times B\G_m\ar[r] \& X^{\Hod}\ar[r, "i_\Hod"] \& X^\N\;,
\end{tikzcd}
}
\end{equation*}
where the first map is induced by the canonical map \EEDIT{$\G_a\rightarrow B\V(\O(1))^\sharp\oplus \G_a=\G_a^\Hod$ of stacks over $B\G_m$,} identifies with a graded quasi-coherent complex $M^\bullet$ on $X$ and the set of integers $i$ such that \EEDIT{the $i$-th graded piece $M^i$} is nonzero is called the set of \emph{Hodge--Tate weights} of $E$.
\end{defi}

\begin{defi}
Let $X$ be a bounded $p$-adic formal scheme which is $p$-quasisyntomic and qcqs. For a gauge $E$ on $X$, we define the \emph{Nygaard-filtered prismatic cohomology} of $X$ with coefficients in $E$ as
\begin{equation*}
\EDIT{\Fil^\bullet_\N R\Gamma_\prism(X, E)\coloneqq t_{X, *}(E)\;.}
\end{equation*}
\end{defi}

\section{The conjugate-filtered diffracted Hodge stack}
\label{sect:conjdhod}

\EDIT{In this section, we will develop a stacky formulation of diffracted Hodge cohomology in a similar spirit to what has been done in Section \ref{sect:stacks} for de Rham cohomology and prismatic cohomology.} Namely, we will introduce stacks $\pi_{X^\dHod}: X^\dHod\rightarrow\Spf\Z_p$ and $\pi_{X^\dHod, c}: X^{\dHod, c}\rightarrow\A^1_+/\G_m$ attached to any bounded $p$-adic formal scheme $X$ whose coherent cohomology computes \EDIT{the} diffracted Hodge cohomology of $X$ together with its conjugate filtration as introduced in \cite[§4.7]{APC} in good cases. Finally, we will also show that one can naturally incorporate the Sen operator $\Theta$ on diffracted Hodge cohomology into this picture in a way that encodes the divisibility properties of the Sen operator with respect to the conjugate filtration; \EDIT{namely, for any $n\in\Z$, the endomorphism 
\begin{equation*}
\Theta+n: \Fil^\conj_n R\Gamma_\dHod(X)\rightarrow\Fil^\conj_n R\Gamma_\dHod(X)
\end{equation*}
factors uniquely through $\Fil^\conj_{n-1} R\Gamma_\dHod(X)$, see \cite[Rem.\ 4.9.10]{APC}, and this will be reflected in the stacky picture.}

\subsection{Recollections on diffracted Hodge cohomology}
\label{subsect:review-dhod}

We start by briefly recalling the definition of diffracted Hodge cohomology from \cite[§4.7]{APC}. To do this, first recall that, for a prism $(A, I)$, we may extend relative Hodge--Tate cohomology as a functor $R\mapsto R\Gamma_{\ol{\Prism}}(R/A)$ on $\ol{A}$-algebras \EEDIT{$R$} to all animated $\ol{A}$-algebras via left Kan extension as in \cite[Constr. 4.1.3]{APC}; here, we write $\ol{A}\coloneqq A/I$. Moreover, the conjugate filtration $\Fil_\bullet^\conj R\Gamma_{\ol{\Prism}}(R/A)$ \EEDIT{is an increasing filtration} on $R\Gamma_{\ol{\Prism}}(R/A)$ \EEDIT{and can be} obtained by left Kan extension from the full subcategory of smooth $\ol{A}$-algebras, where it is just defined to be the canonical filtration (i.e.\ the Postnikov filtration) on $R\Gamma_{\ol{\Prism}}(R/A)$, see \cite[Rem.\ 4.1.7]{APC}.

Now recall that
\begin{equation*}
\D(\Z_p^\HT)\cong \lim_{(A, I)\in (\Z_p)_\prism} \widehat{\D}(A/I)\;,
\end{equation*}
\EDIT{see \cite[Rem.\ 3.5.3]{APC}.} Thus, for any ring $R$, the association $(A, I)\mapsto \Fil_\bullet^\conj R\Gamma_{\ol{\Prism}}(\ol{A}\tensorL R/A)$ defines a filtered quasi-coherent complex $\Fil_\bullet^\conj\H_{\ol{\prism}}(R)$ on $\Z_p^\HT$, whose underlying unfiltered object we denote by $\H_{\ol{\prism}}(R)$. \EDIT{Essentially} by definition, we have $\H_{\ol{\prism}}(R)=\H_\prism(R)|_{\Z_p^\HT}$ and the Hodge--Tate comparison from \cite[Thm.\ 6.3]{Prisms} implies that there is an isomorphism
\begin{equation}
\label{eq:fildhod-htcomparison1}
\gr_n^\conj\H_{\ol{\prism}}(R)\cong L\widehat{\Omega}_R^n\tensor\O_{\Z_p^\HT}\{-n\}[-n]
\end{equation}
for any $n$.

Now recall from \cite[Thm.\ 3.4.13]{APC} that $\Z_p^\HT\cong B\G_m^\sharp$ and that the category $\D(B\G_m^\sharp)$ admits the following straightfoward linear algebraic description:

\begin{prop}
\label{prop:fildhod-bgmsharp}
There is an equivalence of categories
\begin{equation*}
\D(B\G_m^\sharp)\cong \widehat{\D}_{(\Theta^p-\Theta)-\nilp}(\Z_p[\Theta])
\end{equation*}
induced by pullback along the quotient map $\Spf\Z_p\rightarrow B\G_m^\sharp$. Here, we demand the \EDIT{local} nilpotence of $\Theta^p-\Theta$ only mod $p$; \EEDIT{in other words, the right-hand side denotes the full subcategory of all $M\in\widehat{\D}(\Z_p[\Theta])$ for which $\Theta^p-\Theta$ acts locally nilpotently on $H^i(M\tensorL_{\Z_p} \F_p)$ for all $i\in\Z$.}
\end{prop}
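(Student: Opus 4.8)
The plan is to identify $\D(B\G_m^\sharp)$ with $\D(\G_m^\sharp)$-comodules, or equivalently to descend quasi-coherent complexes along the fpqc cover $\Spf\Z_p\to B\G_m^\sharp$, and then to make the Hopf-algebroid describing this descent completely explicit. First I would recall that $\G_m^\sharp$ is the PD-hull of the identity section of $\G_m$, so that as a scheme $\G_m^\sharp = \Spf\Z_p\langle x\rangle^\wedge_{\mathrm{PD}}$ where $x = s-1$ with $s$ the coordinate on $\G_m$; concretely, its ring of functions is the $p$-completed divided-power algebra $\Z_p\{x\}^\wedge = \Z_p\langle x, \tfrac{x^2}{2!}, \tfrac{x^3}{3!},\dots\rangle^\wedge_p$, a Hopf algebra over $\Z_p$ with the comultiplication inherited from $\G_m$. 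Since $\G_m^\sharp$ acts trivially on $\Spf\Z_p$, descent data for an object $M\in\widehat{\D}(\Z_p)$ along $\Spf\Z_p\to B\G_m^\sharp$ amount to a coaction $M\to M\,\widehat{\tensor}_{\Z_p}\cO(\G_m^\sharp)$ satisfying the usual cocycle and counit conditions, i.e.\ $\D(B\G_m^\sharp)$ is the $\infty$-category of (derived $p$-complete) comodules over the Hopf algebra $\cO(\G_m^\sharp)$.

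The key computational step is then to translate a comodule structure over $\cO(\G_m^\sharp)$ into the action of a single operator. Here I would use that $\G_m^\sharp$ is a \emph{connected} $p$-adic formal group whose Lie algebra is free of rank one: dualising, $\cO(\G_m^\sharp)$ is the continuous $\Z_p$-linear dual of the universal enveloping-type algebra generated by the invariant derivation $\Theta = s\tfrac{d}{ds}$ on $\G_m$, restricted to the PD-neighbourhood of the identity. Because divided powers are built in, the relevant enveloping algebra is not the polynomial ring $\Z_p[\Theta]$ with its naive divided powers but rather the one where the \emph{$p$-th power map} behaves like Frobenius; the precise statement, essentially \cite[proof of Thm. 3.4.13]{APC}, is that a $\cO(\G_m^\sharp)$-comodule structure on $M$ is the same as an endomorphism $\Theta$ of $M$ such that $\Theta^p - \Theta$ acts locally topologically nilpotently. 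Thus I would argue: giving a coaction $M\to M\,\widehat{\tensor}\,\cO(\G_m^\sharp)$ is equivalent to giving, for each divided power $\gamma_n(x)$, a ``component'' operator $\Theta^{(n)}$ on $M$; the cocycle condition forces $\Theta^{(n)}$ to be determined by $\Theta := \Theta^{(1)}$ via the combinatorics of the $\delta$-structure, and the only constraint left (coming from the fact that $\cO(\G_m^\sharp)$ is genuinely $p$-complete rather than just $p$-adically separated) is that the Artin–Hasse–type relation forces $\Theta^p - \Theta$ to be topologically nilpotent after base change to $\F_p$. This is exactly the nilpotence condition in the statement, and it is checked on cohomology $H^i(M\tensorL_{\Z_p}\F_p)$ because local nilpotence of an operator on a derived $p$-complete complex is detected on mod-$p$ cohomology.

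The main obstacle I anticipate is precisely pinning down \emph{which} relation on $\Theta$ the divided-power structure imposes — i.e.\ getting the ``$\Theta^p - \Theta$'' (rather than, say, $\Theta^p$ alone) correct. This is a genuine computation with the Hopf algebra $\cO(\G_m^\sharp)$: one has to compare the $p$-th divided power $\gamma_p(x)$ with $x^p/p!$ and see how the $\delta$-ring / Frobenius structure on $\Z_p\{x\}^\wedge$ makes $\gamma_p$ interact with $\Theta$, producing the combination $\Theta^p-\Theta$ as the operator that must be (topologically) nilpotent. I would either cite \cite[Thm. 3.4.13]{APC} directly for the underlying abelian/$1$-categorical equivalence on hearts and then bootstrap to the derived statement by noting both sides are the derived ($p$-complete) categories of their hearts with $\Theta$ and its nilpotence condition being exact and preserved under the standard $t$-structure, or reprove the Hopf-algebra identification from scratch; either way, the remaining verifications — compatibility with the symmetric monoidal structures, and that pullback along $\Spf\Z_p\to B\G_m^\sharp$ is the functor implementing the equivalence — are formal consequences of descent.
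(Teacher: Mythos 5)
The paper's own ``proof'' of this proposition is a one-line citation to \cite[Thm.\ 3.5.8]{APC}, so what you are really doing is re-proving the cited result; your outline is the standard argument and is essentially the one Bhatt--Lurie give, and indeed the paper itself carries out exactly this kind of comodule-to-operator translation for $\G_a^\sharp$ in the proof of \cref{lem:fildhod-complexeszpntzero}. Two corrections and one concrete pointer. First, the reference you want is \cite[Thm.\ 3.5.8]{APC}, not Thm.\ 3.4.13 (the latter is the identification $\Z_p^\HT\cong B\G_m^\sharp$, which is a different statement). Second, on your self-identified ``main obstacle'': the source of $\Theta^p-\Theta$ has nothing to do with the $\delta$-structure on $\Z_p\langle x\rangle$. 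Writing $x=s-1$, the operator dual to $\gamma_n(x)=x^n/n!$ for the multiplicative comultiplication $s\mapsto s\tensor s$ is the falling factorial $\Theta(\Theta-1)\cdots(\Theta-n+1)$ (not $\Theta^n$ as in the additive case treated in \cref{lem:fildhod-complexeszpntzero}), so the coaction must be $m\mapsto\sum_n \Theta(\Theta-1)\cdots(\Theta-n+1)(m)\tensor\gamma_n(x)$; convergence of this sum in the $p$-completed tensor product is governed by the congruence $\prod_{i=0}^{p-1}(\Theta-i)\equiv\Theta^p-\Theta\pmod p$, which is exactly where the stated nilpotence condition comes from. Third, your fallback of ``both sides are the derived categories of their hearts'' is not available as stated: $\D(B\G_m^\sharp)$ is defined by descent as a totalization of module categories, not as the derived category of an abelian category, so passing from the $1$-categorical equivalence on hearts to the derived statement requires a genuine argument (this is precisely the content of \cite[Thm.\ 3.5.8]{APC} beyond the classical Cartier-duality computation). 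Citing that theorem directly, as the paper does, is therefore the cleaner route; if you insist on a self-contained proof, you should run the totalization argument rather than appeal to hearts.
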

\begin{proof}
See \cite[Thm.\ 3.5.8]{APC}.
\end{proof}

Thus, we may identify $\Fil^\conj_\bullet \H_{\ol{\prism}}(R)$ with a filtered $p$-complete $\Z_p$-complex $\Fil^\conj_\bullet\widehat{\Omega}_R^\dHod$ equipped with an operator $\Theta: \Fil^\conj_\bullet\widehat{\Omega}_R^\dHod\rightarrow\Fil^\conj_\bullet\widehat{\Omega}_R^\dHod$ such that $\Theta^p-\Theta$ is \EEDIT{locally} nilpotent \EEDIT{in cohomology} mod $p$, \EDIT{which we call the Sen operator}; the underlying unfiltered object of $\Fil^\conj_\bullet\widehat{\Omega}_R^\dHod$ is denoted $\widehat{\Omega}_R^\dHod$. As the $n$-th power of the Breuil-Kisin twist identifies with the $\Z_p$-module $\Z_p$ with $\Theta$ acting by multiplication by $n$ under the equivalence from \cref{prop:fildhod-bgmsharp}, by virtue of (\ref{eq:fildhod-htcomparison1}), we have isomorphisms
\begin{equation}
\label{eq:fildhod-htcomparison2}
\gr_n^\conj\widehat{\Omega}_R^\dHod\cong L\widehat{\Omega}^n_R[-n]
\end{equation}
for all $n$ \EDIT{and the Sen operator acts by multiplication by $-n$ on $\gr_n^\conj\widehat{\Omega}_R^\dHod$}. 

\begin{rem}
\label{rem:fildhod-siftedcolims}
\EDIT{Using (\ref{eq:fildhod-htcomparison2}) and the fact that} the functor $R\mapsto L\widehat{\Omega}^n_R[-n]$ from commutative rings to \EDIT{$\widehat{\D}(\Z_p)$} commutes with sifted colimits, we conclude by induction that the functor $R\mapsto \Fil^\conj_\bullet\widehat{\Omega}^\dHod_R$ commutes with sifted colimits as well \EDIT{(note that $\Fil^\conj_\bullet\widehat{\Omega}^\dHod_R=0$ for $\bullet<0$).}
\end{rem}

Finally, as the construction $R\mapsto\Fil^\conj_\bullet\widehat{\Omega}_R^\dHod$ is compatible with $p$-complete étale localisation by \cite[Prop.\ 4.7.11]{APC}, the constructions above glue in the following sense: For any bounded $p$-adic formal scheme $X$, we obtain a filtered quasi-coherent complex $\Fil^\conj_\bullet\widehat{\Omega}_X^\dHod$ on $X$ with the property that
\begin{equation*}
\Fil^\conj_\bullet\widehat{\Omega}_X^\dHod|_{\Spf R}=\Fil^\conj_\bullet\widehat{\Omega}_R^\dHod
\end{equation*}
for any affine open $\Spf R\rightarrow X$. Again, we denote the underlying unfiltered object by $\widehat{\Omega}_X^\dHod$. Moreover, also the corresponding Sen operators glue to give a Sen operator $\Theta$ on $\Fil^\conj_\bullet\widehat{\Omega}_X^\dHod$. This finally leads to the following definition:

\begin{defi}
\label{defi:fildhod-defdhod}
Let $X$ be a bounded $p$-adic formal scheme. The \emph{diffracted Hodge cohomology} of $X$ is defined as
\begin{equation*}
R\Gamma_\dHod(X)\coloneqq R\Gamma(X, \widehat{\Omega}_X^\dHod)\;.
\end{equation*}
It is equipped with a \emph{conjugate filtration} given by 
\begin{equation*}
\Fil^\conj_\bullet R\Gamma_\dHod(X)\coloneqq R\Gamma(X, \Fil^\conj_\bullet\widehat{\Omega}_X^\dHod)
\end{equation*}
and a \emph{Sen operator} $\Theta: \Fil^\conj_\bullet R\Gamma_\dHod(X)\rightarrow\Fil^\conj_\bullet R\Gamma_\dHod(X)$ induced by the Sen operator on $\Fil^\conj_\bullet\widehat{\Omega}_X^\dHod$.
\end{defi}

\subsection{The diffracted Hodge stack}

We now examine the stack $X^\dHod$ as introduced by Bhatt--Lurie in \cite[Constr. 3.8]{PFS} \EDIT{and relate it to \cref{defi:fildhod-defdhod}}. For the rest of this section, let $X$ be a bounded $p$-adic formal scheme.

\begin{defi}
\label{defi:fildhod-dhod}
For a bounded $p$-adic formal scheme $X$, its \emph{diffracted Hodge stack} $X^\dHod$ is defined as the pullback
\begin{equation*}
\begin{tikzcd}
X^\dHod\ar[r, "i_\dHod"]\ar[d, "\pi_{X^\dHod}", swap] & X^\HT\ar[d] \\
\Spf\Z_p\ar[r, "\eta"] & \Z_p^\HT\nospacepunct{\;,}
\end{tikzcd}
\end{equation*}
where the map $\eta: \Spf\Z_p\rightarrow\Z_p^\HT$ identifies with the quotient map under the isomorphism $\Z_p^\HT\cong B\G_m^\sharp$.
\end{defi}

\begin{rem}
Despite the notation, the map $i_\dHod$ is generally \emph{not} a closed immersion. Moreover, recalling from \cite[Thm.\ 3.4.13]{APC} that the map $\eta: \Spf\Z_p\rightarrow \Z_p^\HT$ corresponds to assigning to any $p$-nilpotent ring $S$ the Cartier--Witt divisor $W(S)\xrightarrow{V(1)} W(S)$, we immediately see that
\begin{equation*}
X^\dHod(S)=\Map(\Spec(W(S)/V(1)), X)\;,
\end{equation*}
where the mapping space is computed in derived algebraic geometry and the quotient $W(S)/V(1)$ is to be taken in the derived sense. 
\end{rem}

The stack $X^\dHod$ is related to the diffracted Hodge cohomology of $X$ as follows:

\begin{prop}
\label{prop:fildhod-comparison}
Let $X$ be a bounded $p$-adic formal scheme and assume that $X$ is $p$-quasisyntomic and qcqs. Then $\pi_{X^\dHod, *}\O_{X^\dHod}$ identifies with $R\Gamma_\dHod(X)$ in $\widehat{\D}(\Z_p)$.
\end{prop}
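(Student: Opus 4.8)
The plan is to reduce the statement to the affine case and there to the explicit linear-algebra description of $\D(\Z_p^\HT)\cong\D(B\G_m^\sharp)$ furnished by \cref{prop:fildhod-bgmsharp}. Concretely, let $\eta\colon\Spf\Z_p\to\Z_p^\HT$ be the quotient map and consider the cartesian square defining $X^\dHod$. Since $\eta$ is an fpqc cover (it is the quotient map $\Spf\Z_p\to B\G_m^\sharp$), base change along $\eta$ is conservative and we may try to compute $\pi_{X^\dHod,*}\O_{X^\dHod}$ after pulling back to $\Spf\Z_p$; but this loses the point, since the pushforward we want is already a complex of $\Z_p$-modules. So instead I would argue as follows: by \cref{thm:prismatisation-comparison} (or rather its Hodge--Tate variant) together with the identification $\H_{\ol\prism}(R)=\H_\prism(R)|_{\Z_p^\HT}$ recalled in Section \ref{subsect:review-dhod}, the pushforward of $\O_{X^\HT}$ along $X^\HT\to\Z_p^\HT$ is the quasi-coherent complex $\H_{\ol\prism}(X)$ on $\Z_p^\HT$, at least when $X$ is affine; by $p$-quasisyntomic descent and the gluing discussed before \cref{defi:fildhod-defdhod} this holds for general $X$ as in the hypotheses. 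Then, since $X^\dHod=X^\HT\times_{\Z_p^\HT}\Spf\Z_p$ and the relevant pushforwards satisfy base change along the (affine, in fact quasi-affine) map $\eta$, we get
\begin{equation*}
\pi_{X^\dHod,*}\O_{X^\dHod}\cong \eta^*\bigl(\text{pushforward of }\O_{X^\HT}\text{ to }\Z_p^\HT\bigr)=\eta^*\H_{\ol\prism}(X)\;.
\end{equation*}

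Next I would identify the right-hand side with $R\Gamma_\dHod(X)$. Under the equivalence of \cref{prop:fildhod-bgmsharp}, pullback along $\eta$ is exactly the forgetful functor sending a complex-with-$\Theta$ to its underlying $\Z_p$-complex. But by the very definition recalled in Section \ref{subsect:review-dhod}, the complex $\widehat\Omega_R^\dHod$ (resp.\ $\widehat\Omega_X^\dHod$) \emph{is} the underlying $\Z_p$-complex of $\H_{\ol\prism}(R)$ (resp.\ of the glued object $\H_{\ol\prism}(X)$) viewed through \cref{prop:fildhod-bgmsharp}, and $R\Gamma_\dHod(X)=R\Gamma(X,\widehat\Omega_X^\dHod)$. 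Since $R\Gamma(X,-)$ commutes with the pushforward along $X^\HT\to\Z_p^\HT$ appropriately (that is, the global sections over $\Z_p^\HT$ of $\H_{\ol\prism}(X)$ compute $R\Gamma_{\ol\prism}(X)$, and pulling back along $\eta$ and taking global sections over $\Spf\Z_p$ gives the same answer as pushing forward $\O_{X^\dHod}$ along $\pi_{X^\dHod}$ — this is a diagram chase with the two cartesian squares $X^\dHod\to X^\HT\to\Z_p^\HT$ and $X^\dHod\to\Spf\Z_p\to\Z_p^\HT$), we conclude $\pi_{X^\dHod,*}\O_{X^\dHod}\cong R\Gamma_\dHod(X)$ in $\widehat\D(\Z_p)$.

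For the affine base case, I would follow \cite[§4.7]{APC} closely: when $X=\Spf R$ with $R$ smooth over $\Z_p$, unwind $X^\dHod(S)=\Map(\Spec(W(S)/V(1)),X)$ and compare with the construction of $\widehat\Omega_R^\dHod$ via left Kan extension; the key compatibility is that $R\mapsto\pi_{R^\dHod,*}\O_{R^\dHod}$ commutes with sifted colimits on animated rings (because $X^\dHod$ is built from a $1$-truncated animated ring stack and cohomology of affines is a finite limit in each homotopical degree), which matches the sifted-colimit-preservation of $R\mapsto\Fil^\conj_\bullet\widehat\Omega_R^\dHod$ noted in \cref{rem:fildhod-siftedcolims}; on smooth $R$ both sides agree by the Hodge--Tate comparison (\ref{eq:fildhod-htcomparison2}) and the explicit description of $\Z_p^\HT$. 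The main obstacle I anticipate is the careful bookkeeping of base change: one must check that the pushforward $\O_{X^\HT}\mapsto(\text{its image on }\Z_p^\HT)$ commutes with the non-flat base change along $\eta\colon\Spf\Z_p\to\Z_p^\HT$, which is where one invokes the general base-change machinery for the cartesian squares of stacks attached to $X\to\Z_p$, as referenced in the Notation section via \cite[App.\ A]{EtaleCrystalline}; once that is in place, the identification of $\eta^*$ with the forgetful functor of \cref{prop:fildhod-bgmsharp} and the definitional nature of $\widehat\Omega_X^\dHod$ make the rest essentially formal.
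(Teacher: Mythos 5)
Your proposal is correct and follows essentially the same route as the paper: reduce to the affine case, identify $\pi_{X^\HT,*}\O_{X^\HT}$ with $\H_{\ol\prism}(X)$ via the prismatisation comparison, and conclude by base change along $\eta\colon\Spf\Z_p\to\Z_p^\HT$ together with the fact that $\eta^*$ is the forgetful functor of \cref{prop:fildhod-bgmsharp}, so that $\eta^*\H_{\ol\prism}(R)=\widehat\Omega_R^\dHod$ by definition. The closing paragraph about a separate smooth base case via left Kan extension is unnecessary (that identification is definitional, not something to be re-proved), but it does no harm.
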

\begin{proof}
As the constructions $X\mapsto X^\dHod$ and $X\mapsto \widehat{\Omega}_X^\dHod$ are compatible with Zariski localisation by \cite[Rem.\ 3.9]{PFS} and \cite[Prop.\ 4.7.11]{APC}, we may assume that $X=\Spf R$ is affine. As $\pi_{R^\prism, *}\O_{R^\prism}=\H_\prism(R)$ by \cite[Cor.s 8.13, 8.17]{PFS} and $\H_{\ol{\prism}}(R)=\H_\prism(R)|_{\Z_p^\HT}$, we conclude that $\pi_{R^\HT, *}\O_{R^\HT}=\H_{\ol{\prism}}(R)$. Now the result follows from base change for the cartesian square
\begin{equation*}
\EDIT{
\raisebox{\depth}{
\begin{tikzcd}[baseline={(current bounding box.center)}, ampersand replacement=\&]
R^\dHod\ar[r, "i_{\dHod}"]\ar[d, "\pi_{R^\dHod}", swap] \& R^\HT\ar[d, "\pi_{R^\HT}"] \\
\Z_p^\dHod\ar[r] \& \Z_p^\HT\nospacepunct{\;.}
\end{tikzcd}
}
}
\qedhere
\end{equation*}
\end{proof}

In the sequel, we will also need a more concrete description of $X^\dHod$ in the case where $X=\Spf R$ for $R$ quasiregular semiperfectoid. 

\begin{rem}
\label{rem:fildhod-degzero}
\EEDIT{For $R$ quasiregular semiperfectoid,} the complex $\widehat{\Omega}_R^\dHod$ is concentrated in degree zero. Indeed, the conjugate filtration is complete and its graded pieces $L\widehat{\Omega}^n_R[-n]$ are all concentrated in degree zero, \EEDIT{see \cite[Rem.\ 4.21]{THHandPAdicHodgeTheory}}, \EEDIT{hence $\Fil^\conj_n \widehat{\Omega}_R^\dHod$ is concentrated in degree zero for any $n\in\Z$ and we conclude that the same is true for $\widehat{\Omega}_R^\dHod$.}
\end{rem}

\begin{prop}
\label{prop:fildhod-dhodqrsp}
Assume that $X=\Spf R$ for a quasiregular semiperfectoid ring $R$. Then there is an isomorphism $R^\dHod\cong \Spf\widehat{\Omega}^\dHod_R$.
\end{prop}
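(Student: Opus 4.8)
The plan is to show that $R^\dHod$ is an affine formal scheme; its ring of functions then comes for free from \cref{prop:fildhod-comparison}. Since $R$ quasiregular semiperfectoid is in particular $p$-quasisyntomic and $\Spf R$ is affine, hence qcqs, \cref{prop:fildhod-comparison} applies and yields $\pi_{R^\dHod, *}\O_{R^\dHod}\simeq R\Gamma_\dHod(R)=\widehat{\Omega}^\dHod_R$; by \cref{rem:fildhod-degzero} this complex is concentrated in degree zero, so it is an honest (derived $p$-complete) $\Z_p$-algebra. Consequently, once $R^\dHod$ is known to be affine, it is recovered as $\Spf$ of its global sections, giving $R^\dHod\cong\Spf(\pi_{R^\dHod, *}\O_{R^\dHod})\cong\Spf\widehat{\Omega}^\dHod_R$, as desired.

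It thus remains to prove that $R^\dHod$ is affine. By \cref{defi:fildhod-dhod} we have $R^\dHod=R^\HT\times_{\Z_p^\HT}\Spf\Z_p$, so, affine morphisms being stable under base change, it suffices to show that $\pi_{R^\HT}\colon R^\HT\to\Z_p^\HT$ is affine. By definition $R^\HT$ is the preimage of $B\G_m\subseteq\A^1_-/\G_m$ under $R^\prism\to\A^1_-/\G_m$, hence $R^\HT=R^\prism\times_{\Z_p^\prism}\Z_p^\HT$ is the base change of $\pi_{R^\prism}\colon R^\prism\to\Z_p^\prism$ along the closed immersion $\Z_p^\HT\hookrightarrow\Z_p^\prism$, and it is enough that $\pi_{R^\prism}$ be affine. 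This is where the quasiregular semiperfectoid hypothesis is used: for such $R$ the absolute prismatic cohomology is an honest prism concentrated in degree zero, and, correspondingly, $R^\prism$ is relatively representable by affine formal schemes over $\Z_p^\prism$ — concretely, after fpqc base change along any $\Spf A\to\Z_p^\prism$ classified by a prism $(A,I)$, the fibre product $R^\prism\times_{\Z_p^\prism}\Spf A$ is affine over $\Spf A$, being $\Spf$ of a discrete prismatic cohomology ring (see \cite[§8]{PFS}, cf.\ also \cite{APC}). Hence $\pi_{R^\prism}$ is affine, and therefore so are $\pi_{R^\HT}$ and $R^\dHod\to\Spf\Z_p$; in particular $R^\dHod$ is affine.

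The load-bearing step is the affineness of $\pi_{R^\prism}$ in the second paragraph: it genuinely fails for general bounded affine $X$ (already $(\widehat{\A}^1)^\dHod$ is not affine, reflecting that $\widehat{\Omega}^\dHod_{\Z_p\langle t\rangle}$ is a two-term complex rather than a module), so the quasiregular semiperfectoid hypothesis cannot be dropped. If one prefers a self-contained argument avoiding the relative prismatisation, one can instead choose a presentation $R\simeq R_0\otimes^{\mathbb L}_{R_0\langle t_1,\dots,t_n\rangle}R_0$ with $R_0$ perfectoid, one map sending the $t_i$ to a Koszul-regular sequence $\widetilde{x}_i\in R_0$ cutting out $R$ and the other sending $t_i\mapsto 0$, and combine the fact that $X\mapsto X^\dHod$ preserves limits with the identification $P^\dHod\cong\Spf P$ for perfectoid $P$ (which follows from deformation theory along the divided-power thickening $W(S)/^{\mathbb L}V(1)\twoheadrightarrow S$, whose obstruction and lifting spaces vanish since $L^\wedge_{P/\Z_p}=0$); the remaining point is then to check that the resulting fibre product is again affine, which recovers $\Spf\widehat{\Omega}^\dHod_R$ on the nose.
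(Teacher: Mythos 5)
Your overall strategy coincides with the paper's: establish that $R^\dHod$ is an affine formal scheme and then identify its ring of functions using \cref{prop:fildhod-comparison} and \cref{rem:fildhod-degzero}; that part of your write-up is correct. The difference lies in how affineness is obtained. The paper notes that $\eta\colon \Spf\Z_p\to\Z_p^\HT$ is the Hodge--Tate fibre of $\rho_{\Z_p[\![\tilde p]\!]}$ for the single prism $(\Z_p[\![\tilde p]\!],(\tilde p))$, so that $R^\dHod$ is literally the relative Hodge--Tate stack of $R$ over that prism, and affineness is then exactly \cite[Cor.\ 7.18]{PFS}. You instead invoke the stronger global claim that $\pi_{R^\prism}\colon R^\prism\to\Z_p^\prism$ is an affine morphism for quasiregular semiperfectoid $R$ and base-change it down; this is a legitimate alternative, but as justified in your second paragraph it is not yet a proof. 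Checking affineness fpqc-locally requires an actual fpqc cover of $\Z_p^\prism$ of the form $\coprod\Spf A$ --- the individual maps $\rho_A$ are in general neither flat nor surjective, so ``any $\Spf A\to\Z_p^\prism$ classified by a prism'' does not do the job --- and, for such a cover, the fibre product $R^\prism\times_{\Z_p^\prism}\Spf A$ is the relative prismatisation of $R\widehat{\otimes}_{\Z_p}\overline{A}$ over $(A,I)$ rather than of $R$ itself, so one also needs $\overline{A}$ perfectoid, say, to keep the base-changed ring quasiregular semiperfectoid; the affineness of that relative prismatisation is then again the content of \cite[§7]{PFS}. In other words, your route ultimately rests on the same input as the paper's, applied over a whole covering family instead of over the one prism $(\Z_p[\![\tilde p]\!],(\tilde p))$; it works once the cover and the precise references are pinned down, but it buys nothing over the paper's argument. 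The closing alternative via a Koszul presentation is only a sketch: the assertions that $X\mapsto X^\dHod$ carries the derived tensor product to a fibre product of stacks and, more seriously, that this limit of non-affine stacks (each $(\widehat{\A}^n)^\dHod$ is a gerbe) is affine are precisely the points at issue, so it cannot stand in for the main argument.
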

\begin{proof}
Let $(\Z_p[\![\tilde{p}]\!], (\tilde{p}))$ denote the prism from \cite[Not. 3.8.9]{APC}. Then the map $\eta$ identifies with the base change of $\rho_{\Z_p[\![\tilde{p}]\!]}: \Spf\Z_p[\![\tilde{p}]\!]\rightarrow\Z_p^\prism$ to $\Z_p^\HT$ by \cite[Prop.\ 3.8.12]{APC} and hence $R^\dHod$ identifies with the relative \EDIT{Hodge--Tate stack} of $R$ with respect to $(\Z_p[\![\tilde{p}]\!], (\tilde{p}))$ as defined in \cite[Var.\ 5.1]{PFS}. \EDIT{Thus, the stack $R^\dHod$ is actually an affine formal scheme by \cite[Cor.\ 7.18]{PFS} and the claim follows from \cref{prop:fildhod-comparison}.}
\end{proof}

\subsection{Incorporating the conjugate filtration}

We now construct the conjugate-filtered diffracted Hodge stack $X^{\dHod, c}$. For this, we first make some preliminary remarks. Namely, recall that, by the proof of \cite[Prop.\ 5.3.7]{FGauges}, there is an fpqc cover $\A^1_+/\G_m\rightarrow (\Z_p^\N)_{t=0}$. This cover is given by a filtered Cartier--Witt divisor $M\xrightarrow{d} W$ on $\A^1_+/\G_m$ constructed by virtue of the commutative diagram
\begin{equation*}
\EEDIT{
\begin{tikzcd}[ampersand replacement=\&]
0\ar[r] \& \G_a^\sharp\ar[r]\ar[d, "u^\sharp", swap] \& W\ar[r]\ar[d]\ar[dd, "V(1)" {yshift=20pt, xshift=5pt}, bend right=45, swap] \& F_*W\ar[r]\ar[d, equals] \& 0 \\
0\ar[r] \& \V(\O(1))^\sharp\ar[r]\ar[d, "0", swap] \& M\ar[r]\ar[d, "d", swap] \& F_*W\ar[r]\ar[d, "p"] \& 0 \\
0\ar[r] \& \G_a^\sharp\ar[r] \& W\ar[r] \& F_*W\ar[r] \& 0\nospacepunct{\;;}
\end{tikzcd}
}
\end{equation*}
here, $M$ is defined as the pushout of the upper left square and the maps out of $M$ are induced by the other maps in the diagram using the universal property.

\begin{defi}
\label{defi:fildhod-dhodconj}
In the situation above, we obtain a 1-truncated animated $W$-algebra stack
\begin{equation*}
\G_a^{\dHod, c}\coloneqq\Cone(M\xrightarrow{d} W)
\end{equation*}
over $\A^1_+/\G_m$. The \emph{conjugate-filtered diffracted Hodge stack} $X^{\dHod, c}$ of $X$ is the stack over $\A^1_+/\G_m$ defined by
\begin{equation*}
X^{\dHod, c}(\Spec S\rightarrow\A^1_+/\G_m)\coloneqq \Map(\Spec\G_a^{\dHod, c}(S), X)\;,
\end{equation*}
where the mapping space is computed in derived algebraic geometry. If $X=\Spf R$ is affine, we also write $R^{\dHod, c}$ in place of $X^{\dHod, c}$.
\end{defi}

\begin{rem}
\label{rem:fildhod-pullback}
It is immediate from the definition above that one may alternatively describe $X^{\dHod, c}$ as a pullback
\begin{equation*}
\begin{tikzcd}
X^{\dHod, c}\ar[r, "i_{\dHod, c}"]\ar[d, "\pi_{X^{\dHod, c}}", swap] & (X^\N)_{t=0}\ar[d] \\
\A^1_+/\G_m\ar[r] & (\Z_p^\N)_{t=0}\nospacepunct{\;,}
\end{tikzcd}
\end{equation*}
\EEDIT{where the bottom map is given by the filtered Cartier--Witt divisor on $\A^1_+/\G_m$ constructed above.} Also note that the preimage of $\G_m/\G_m\subseteq\A^1_+/\G_m$ under $\pi_{X^{\dHod, c}}$ recovers $X^\dHod$: Indeed, in this case, $u$ is an isomorphism and hence $M\xrightarrow{d} W$ identifies with $W\xrightarrow{V(1)} W$. Moreover, the preimage of $B\G_m\subseteq\A^1_+/\G_m$ under $\pi_{X^{\dHod, c}}$ recovers $X^\Hod$: This follows from the fact that $u=0$ in this case and hence $M\xrightarrow{d} W$ identifies with \EEDIT{$\V(\O(1))^\sharp\oplus F_*W\xrightarrow{(0, V)} W$}.
\end{rem}

The aim of the rest of this section is to prove the following relation between the stack $X^{\dHod, c}$ and the conjugate filtration on the diffracted Hodge cohomology of $X$:

\begin{thm}
\label{thm:fildhod-comparisonfiltered}
Let $X$ be a bounded $p$-adic formal scheme and assume that $X$ is $p$-quasisyntomic and qcqs. Then $\pi_{X^{\dHod, c}, *}\O_{X^{\dHod, c}}$ identifies with $\Fil_\bullet^\conj R\Gamma_\dHod(X)$ under the Rees equivalence. 
\end{thm}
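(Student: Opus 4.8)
The plan is to reduce to the affine quasiregular semiperfectoid case and then make an explicit identification there, mirroring how \cref{prop:fildhod-comparison} was deduced from \cref{prop:fildhod-dhodqrsp}. First I would observe that both sides of the claimed identity are compatible with quasisyntomic descent: the left-hand side because the Nygaard-filtered prismatisation and hence $X^{\dHod, c}$ satisfy descent along quasisyntomic covers (being defined by a mapping-space formula into $X$, together with the descent properties of $(\Z_p^\N)_{t=0}$ recalled before \cref{defi:fildhod-dhodconj}), and the right-hand side because $\Fil^\conj_\bullet\widehat{\Omega}_X^\dHod$ is a quasi-coherent complex on $X$ and its formation commutes with the relevant colimits by \cref{rem:fildhod-siftedcolims}. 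Concretely, choosing a quasisyntomic cover of $X$ by (formal spectra of) quasiregular semiperfectoid rings and taking the Čech nerve, it suffices to prove the statement when $X = \Spf R$ with $R$ quasiregular semiperfectoid, compatibly with the cosimplicial structure.

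Next, in the quasiregular semiperfectoid case, I would analyse $R^{\dHod, c}$ directly using the pullback description from \cref{rem:fildhod-pullback} and the explicit filtered Cartier--Witt divisor $M \xrightarrow{d} W$ over $\A^1_+/\G_m$ constructed there. Pulling back $\G_a^{\dHod, c} = \Cone(M \xrightarrow{d} W)$ along $\Spf R \to \A^1_+/\G_m$ should, working over the Rees algebra of $\A^1_+/\G_m$, produce an explicit 1-truncated animated algebra whose $\Spec$ is the Rees/filtered incarnation of $R^\dHod \cong \Spf\widehat{\Omega}_R^\dHod$ from \cref{prop:fildhod-dhodqrsp}. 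The key point is to match the $\G_m$-weights: the coordinate $u$ placed in degree $1$ (so that we are in the increasing-filtration convention $\A^1_+/\G_m$) should produce exactly the conjugate filtration, whose graded pieces are $L\widehat{\Omega}^n_R[-n]$ by (\ref{eq:fildhod-htcomparison2}). I would verify this by restricting the filtered Cartier--Witt divisor along the three relevant loci: over $\G_m/\G_m$ it degenerates to $W \xrightarrow{V(1)} W$, recovering $R^\dHod$ and hence the underlying object $\widehat{\Omega}_R^\dHod$; over $B\G_m$ it degenerates to $\V(\O(1))^\sharp \oplus F_*W \xrightarrow{(0,V)} W$, which by the computation of the Hodge stack (\cref{rem:fildrstack-hodstackcoh}, specialised to the point $\Spf R$) recovers the associated graded; and one then checks that the interpolating family is precisely the Rees construction on $\Fil^\conj_\bullet\widehat{\Omega}_R^\dHod$, using that this filtration is complete and exhaustive with the correct graded pieces (as in \cref{rem:fildhod-degzero}).

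To make the previous step rigorous I expect it is cleanest to identify $(\Z_p^\N)_{t=0}$, via the fpqc cover $\A^1_+/\G_m \to (\Z_p^\N)_{t=0}$ and the linear-algebraic description of $\D((\Z_p^\N)_{t=0})$ from \cref{prop:fildhod-zpntzero} (referenced in \cref{thm:intro-fildhod}(iii)), with graded $D$-nilpotent modules over $\Z_p\{u,D\}/(Du-uD-1)$, and then to trace through what $\pi_{R^{\dHod,c},*}\O_{R^{\dHod,c}}$ becomes under this equivalence: it should be the Rees module of $\Fil^\conj_\bullet\widehat{\Omega}_R^\dHod$ with $D$ acting as the structure map $\Fil^\conj_n \to \Fil^\conj_{n+1}$ is the ``wrong'' direction, so more precisely one reads off the underlying graded $\Z_p\langle u\rangle$-complex and matches it with the Rees module. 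Alternatively, and perhaps more economically for just this theorem, one bypasses the full linear algebra and argues by base change along $\Spf R \to \A^1_+/\G_m \to (\Z_p^\N)_{t=0}$ together with the already-established unfiltered comparison \cref{prop:fildhod-comparison} and a separate identification of the associated graded with the Hodge cohomology, invoking completeness of the conjugate filtration (valid since $\Fil^\conj_\bullet\widehat{\Omega}_R^\dHod$ vanishes in negative degrees and the filtration is complete for qrsp $R$ by \cref{rem:fildhod-degzero}) to conclude that the Rees equivalence identifies the two filtered objects.

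The main obstacle will be the bookkeeping in the quasiregular semiperfectoid computation: one must carefully keep track of the $\G_m$-action and Breuil--Kisin twists so that the $\G_m$-weight on $\A^1_+/\G_m$ lines up with the conjugate-filtration index (not, say, with a Frobenius-twisted or shifted version of it), and one must check that the filtered Cartier--Witt divisor $M \xrightarrow{d} W$ really does interpolate between the two degenerations in the Rees-theoretic sense rather than merely on associated gradeds. Once the weights are pinned down, completeness of the conjugate filtration together with the identification of graded pieces via \cref{thm:prismatisation-comparison} and (\ref{eq:fildhod-htcomparison2}) forces the filtered identification, and globalising back to arbitrary $p$-quasisyntomic qcqs $X$ via descent finishes the proof.
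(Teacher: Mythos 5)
Your overall skeleton (quasisyntomic descent to the quasiregular semiperfectoid case, then an explicit identification there checked on the strata $\G_m/\G_m$ and $B\G_m$ of $\A^1_+/\G_m$) matches the paper's, and the strata check is indeed how the paper finishes. But there is a genuine gap in the middle: you never construct the \emph{filtered comparison map} in the qrsp case, and without it the final step fails. Knowing that two complete exhaustive filtrations have the same underlying object and isomorphic graded pieces does not identify them as filtered objects (completeness does not ``force'' the identification), and even an abstract isomorphism would not suffice, since descent requires a map natural in $R$, compatible with the cosimplicial structure you yourself invoke. The source of that natural map is exactly the piece your argument omits: one must first prove the statement for \emph{smooth} qcqs $X$ (the paper's \cref{lem:fildhod-comparisonfilteredsmooth}), where the conjugate filtration is the canonical filtration, so that the comparison reduces to showing that $\pi_{X^{\dHod,c},*}\O_{X^{\dHod,c}}$ is derived $u$-complete with $n$-th graded piece in degree $n$ (the $u$-completeness being the nontrivial point, proved via a $\V(\T_{X/\Z_p}(1))^\sharp$-gerbe argument). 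Only then does left Kan extension from smooth $\Z_p$-algebras (using \cref{rem:fildhod-siftedcolims}) produce the natural map $\Rees(\Fil^\conj_\bullet\widehat{\Omega}^\dHod_R)\rightarrow \pi_{R^{\dHod,c},*}\O_{R^{\dHod,c}}$ for qrsp $R$, which one can check to be an isomorphism on the two strata as you describe.

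Note also that this detour through the smooth case is not an artifact of the paper's exposition: for qrsp $R$ the complex $\widehat{\Omega}^\dHod_R$ is discrete, so its conjugate filtration is \emph{not} intrinsically characterised (it is not the Postnikov filtration there) and is itself only defined by left Kan extension from smooth algebras. Any comparison with it therefore has to be manufactured in the smooth case first; a ``direct'' computation with the filtered Cartier--Witt divisor $M\xrightarrow{d} W$ over $\Spf R$, as you propose, has nothing canonical on the other side to map to.
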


To prove this result, \EDIT{we first have to show the corresponding assertion in the special case where the $p$-adic formal scheme $X$ is smooth:}

\begin{lem}
\label{lem:fildhod-comparisonfilteredsmooth}
Let $X$ be a \EEDIT{smooth qcqs} $p$-adic formal scheme. Then $\pi_{X^{\dHod, c}, *}\O_{X^{\dHod, c}}$ identifies with $\Fil_\bullet^\conj R\Gamma_\dHod(X)$ under the Rees equivalence. 
\end{lem}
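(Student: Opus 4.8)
The plan is to reduce to the affine case and then to exploit that, over a smooth affine $\Z_p$-algebra, the conjugate filtration on diffracted Hodge cohomology is simply the Postnikov filtration, which is rigidly determined by its associated graded. Concretely, I would first reduce to $X=\Spf R$ with $R$ a smooth $\Z_p$-algebra (and, after shrinking, the $p$-completion of a finitely generated one). Both sides of the asserted identification are Zariski sheaves in $X$ with values in $\D(\A^1_+/\G_m)$: for $\pi_{X^{\dHod,c},*}\O_{X^{\dHod,c}}$ this follows from base change for the cartesian square of \cref{rem:fildhod-pullback} together with the compatibility of $X\mapsto (X^\N)_{t=0}$ with Zariski localisation, and for $\Fil^\conj_\bullet R\Gamma_\dHod(X)=R\Gamma(X,\Fil^\conj_\bullet\widehat{\Omega}_X^\dHod)$ it is immediate from the construction of $\Fil^\conj_\bullet\widehat{\Omega}_X^\dHod$ as a glued sheaf; since the comparison map produced below will be natural in $X$, this reduction is legitimate.

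Next I would identify the generic fibre and the associated graded of the filtered complex attached to $\pi_{X^{\dHod,c},*}\O_{X^{\dHod,c}}$ under the Rees equivalence. By \cref{rem:fildhod-pullback}, its restriction along the open immersion $\G_m/\G_m\hookrightarrow\A^1_+/\G_m$ is $\pi_{X^\dHod,*}\O_{X^\dHod}$, which equals $R\Gamma_\dHod(X)=\widehat{\Omega}_R^\dHod$ by \cref{prop:fildhod-comparison} (applicable since smooth $\Z_p$-algebras are $p$-quasisyntomic and $\Spf R$ is qcqs); and its restriction along $B\G_m\hookrightarrow\A^1_+/\G_m$ is $\pi_{X^\Hod,*}\O_{X^\Hod}$, which by \cref{thm:fildrstack-comparison} and the remark following it is the graded complex with $n$-th term $R\Gamma(X,\widehat{\Omega}_{X/\Z_p}^n)[-n]=\widehat{\Omega}_R^n[-n]$; by (\ref{eq:fildhod-htcomparison2}) and $L\widehat{\Omega}_R^n\simeq\widehat{\Omega}_R^n$ for smooth $R$, this is exactly $\gr^\conj_n R\Gamma_\dHod(X)$. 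In particular the underlying complex is $\widehat{\Omega}_R^\dHod$ and the $n$-th graded piece is concentrated in cohomological degree $n$ (and vanishes for $n<0$).

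To conclude I would invoke the rigidity statement that an exhaustive increasing filtration $\Fil_\bullet$ on a complex $C$ with $\Fil_n=0$ for $n<0$ and with $\gr_n$ concentrated in cohomological degree $n$ for every $n$ is canonically the Postnikov filtration $\tau^{\le\bullet}C$; this is proved by induction on $n$ using the fibre sequence $\Fil_{n-1}\to\Fil_n\to\gr_n$ and the universal property of truncation, and it also produces the comparison map functorially from the maps $\Fil_n\to C$. On the other hand, for smooth $R$ the conjugate filtration on $\widehat{\Omega}_R^\dHod$ is, by its definition as the left Kan extension of the canonical filtration from smooth algebras (see Section~\ref{subsect:review-dhod}) together with the exactness of pullback along the flat cover $\Spf\Z_p\to B\G_m^\sharp$, precisely the Postnikov filtration $\tau^{\le\bullet}\widehat{\Omega}_R^\dHod$; hence $\Fil^\conj_\bullet R\Gamma_\dHod(\Spf R)=\tau^{\le\bullet}\widehat{\Omega}_R^\dHod$ as well, and the two filtered complexes coincide. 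As everything is natural in $X$ and compatible with the Zariski-sheaf structure, the conclusion propagates to all smooth qcqs $X$.

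The one remaining input — and the step I expect to be the genuine obstacle — is that the filtered complex attached to $\pi_{X^{\dHod,c},*}\O_{X^{\dHod,c}}$ vanishes in negative filtration degrees. Since its graded pieces vanish there, this reduces to $\lim_{n\to-\infty}\Fil_n=0$, i.e.\ to $\pi_{X^{\dHod,c},*}\O_{X^{\dHod,c}}$ being concentrated in non-negative weights on $\A^1_+/\G_m$. I would prove this by hand: using étale invariance of $X\mapsto X^{\dHod,c}$ to reduce to $X=\A^d_{\Z_p}$, where $\G_a^{\dHod,c}=\Cone(M\xrightarrow{d}W)$ over $\A^1_+/\G_m$ is completely explicit, and then computing the pushforward directly, in the spirit of the proof of \cite[Thm.\ 2.5.6]{FGauges} for the Hodge-filtered de Rham stack. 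In fact this same computation identifies the complex obtained for $\A^d_{\Z_p}$ outright with the conjugate-filtered diffracted Hodge complex of $\Z_p\langle t_1,\dots,t_d\rangle$, giving an alternative route that bypasses the rigidity argument. Either way, the difficulty resides entirely in pinning down the \emph{filtered} structure; the underlying complex and the associated graded are formal consequences of results already recorded.
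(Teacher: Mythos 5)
Your overall strategy matches the paper's: reduce to the affine smooth case, identify the underlying object via \cref{prop:fildhod-comparison} and the associated graded via the restriction to $B\G_m$ (i.e.\ \cref{thm:fildrstack-comparison} through \cref{rem:fildhod-pullback}), note that for smooth $R$ the conjugate filtration is the canonical (Postnikov) filtration, and then observe that everything hinges on showing the filtration attached to $\pi_{X^{\dHod,c},*}\O_{X^{\dHod,c}}$ does not persist in negative degrees. Your ``rigidity statement'' is exactly the characterisation of the canonical filtration that the paper imports from \cite[Ex.\ 2.2.3]{FGauges} (completeness of the filtration plus graded pieces concentrated in single degrees), and your reduction of the remaining issue to $\lim_{n\to-\infty}\Fil_n=0$, i.e.\ derived $u$-completeness, is precisely where the paper also locates the difficulty. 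So you have correctly isolated the crux.

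The gap is in how you propose to settle that crux. ``Reduce to $\A^d_{\Z_p}$ and compute the pushforward directly'' glosses over the fact that $\nu\colon X^{\dHod,c}\rightarrow X\times\A^1_+/\G_m$ is a gerbe banded by $\V(\T_{X/\Z_p}(1))^\sharp$ (coming from the square-zero extension $\G_a^{\dHod,c}\rightarrow\G_a$ with fibre $B\V(\O(1))^\sharp$), and this gerbe has no evident splitting even for $X=\A^d$ -- so there is no ``completely explicit'' presentation of $X^{\dHod,c}$ from which to read off $\nu_*\O$ without further input. The paper's actual mechanism is the observation (borrowed from the proof of \cite[Cor.\ 2.7.2.(1)]{FGauges}) that the relative cohomology sheaves $H^i(\nu_*\O_{X^{\dHod,c}})$ are independent of the gerbe class, which licenses replacing $\nu$ by the \emph{trivial} gerbe $B_{X\times\A^1_+/\G_m}\V(\T_{X/\Z_p}(1))^\sharp$; its pushforward is the explicit totalisation $\Tot(\O_X\rightarrow\widehat{\Omega}^1_{X/\Z_p}(-1)\rightarrow\cdots)$, which is visibly $u$-complete, and $u$-completeness can be checked on the $H^i(\nu_*\O)$ individually. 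Without this (or an actual proof that the gerbe splits for $\A^d$), your final step does not go through as written; with it, your argument becomes essentially the paper's proof, and the rigidity/Postnikov packaging you use in place of citing \cite[Ex.\ 2.2.3]{FGauges} is a correct and equivalent formulation.
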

\begin{proof}
Similarly to the proof of \cref{prop:fildhod-comparison}, we may reduce to the case where $X=\Spf R$ is affine. As the underlying unfiltered object of $\pi_{X^{\dHod, c}, *}\O_{X^{\dHod, c}}$ identifies with $\widehat{\Omega}_R^\dHod$ by \cref{prop:fildhod-comparison} and \EDIT{$\Fil^\conj_\bullet\widehat{\Omega}_R^\dHod$ is the canonical filtration on $\widehat{\Omega}_R^\dHod$ in this case, all we have to show is that $\pi_{X^{\dHod, c}, *}\O_{X^{\dHod, c}}$ identifies with the canonical filtration on its underlying unfiltered object. Hence, by \cite[Ex.\ 2.2.3]{FGauges},} it suffices to show that $\pi_{X^{\dHod, c}, *}\O_{X^{\dHod, c}}$ is derived $u$-complete and that its $n$-th graded piece is concentrated in degree $n$ for any $n$. For the claim about the graded pieces, note that the pullback of $\pi_{X^{\dHod, c}, *}\O_{X^{\dHod, c}}$ to $B\G_m$ identifies with the Hodge cohomology $\bigoplus_n L\widehat{\Omega}^n_R[-n]$ of $X$ by virtue of \cref{thm:fildrstack-comparison} and \cref{rem:fildhod-pullback}. To prove $u$-completeness, we use the strategy from the proof of \cite[Thm.\ 2.7.9]{FGauges}: First observe that $\pi_0(\G_a^{\dHod, c})\cong\G_a$ and \EEDIT{$\pi_1(\G_a^{\dHod, c})\cong\V(\O(1))^\sharp$}. Thus, the projection $\G_a^{\dHod, c}\rightarrow\G_a$ is a square-zero extension of the target by \EEDIT{$B\V(\O(1))^\sharp$}
over $\A^1_+/\G_m$ and hence the induced map
\begin{equation*}
\nu: X^{\dHod, c}\rightarrow X\times\A^1_+/\G_m
\end{equation*}
of stacks over $\A^1_+/\G_m$ is a \EEDIT{$\V(\T_{X/\Z_p}(1))^\sharp$-gerbe}, \EDIT{where $\T_{X/\Z_p}$ denotes the tangent sheaf of $X$ over $\Z_p$: Indeed, for any $p$-nilpotent test ring $S$ equipped with a map to $\A^1_+/\G_m$, the animated algebras $\G_a^{\dHod, c}(S)$ and $\G_a(S)$ are also $p$-nilpotent and hence we find some $n\geq 0$ such that
\begin{equation*}
X^{\dHod, c}(S)=\Map(\Spec \G_a^{\dHod, c}(S), X)=\Map(\Spec \G_a^{\dHod, c}(S), X_{p^n=0})
\end{equation*}
and similarly $(X\times\A^1_+/\G_m)(S)=\Map(\Spec \G_a(S), X_{p^n=0})$; now the claim follows by derived deformation theory.}\footnote{More precisely, we are using the following assertion in derived algebraic geometry: Let $X$ be a finite type $\Z_p$-scheme and $A'\rightarrow A$ a square-zero extension of animated $\Z_p$-algebras with fibre $N\in\D^{\leq 0}(A)$. Then the fibre of the map $X(A')\rightarrow X(A)$ over a point $\eta: \Spec A\rightarrow X$ is a torsor for $\operatorname{Der}_{\Z_p}(\O_X, \eta_*N)\cong\Map_A(\eta^*L_{X/\Z_p}, N)$; \EDIT{the proof is similar to \cite[Thm.\ 5.1.13]{FlatPurity}.} Note that, if furthermore $N=L[1]\in\D^{\leq -1}(A)$ and $X$ is smooth, we have $\Map_A(\eta^*L_{X/\Z_p}, N)\cong B(\eta^*\T_{X/\Z_p}\tensor_A L)$.} Finally, as the relative cohomology sheaves $H^i(\nu_*\O_{X^\dHod, c})$ of such a gerbe are independent of the gerbe structure by the proof of \cite[Cor.\ 2.7.2.(1)]{FGauges} and it is enough to check that each of them is $u$-complete, we may replace $\nu$ with the trivial gerbe \EEDIT{$\nu': B_{X\times\A^1_+/\G_m}\V(\T_{X/\Z_p}(1))^\sharp\rightarrow X\times\A^1_+/\G_m$}. \EEDIT{Now the conclusion follows as the arguments from \cite[Prop.\ 2.4.5, Rem.\ 2.4.6]{FGauges} show that the pushforward of the structure sheaf along $\nu'$ is given by the quasi-coherent complex 
\begin{equation*}
\EEDIT{
\Tot(\O_X\rightarrow \widehat{\Omega}^1_{X/\Z_p}(-1)\rightarrow \widehat{\Omega}^2_{X/\Z_p}(-2)\rightarrow\dots)
}
\end{equation*}
on $X\times \A^1_+/\G_m$, which is clearly $u$-complete.}
\end{proof}

\EDIT{From the smooth case of \cref{thm:fildhod-comparisonfiltered} treated in the previous lemma, we can now infer an explicit description of the conjugate-filtered diffracted Hodge stack of a quasiregular semiperfectoid ring.}

\begin{prop}
\label{prop:fildhod-dhodconjqrsp}
Assume that $X=\Spf R$ for a quasiregular semiperfectoid ring $R$. Then there is an isomorphism 
\begin{equation*}
R^{\dHod, c}\cong \Spf\Rees(\Fil^\conj_\bullet\widehat{\Omega}^\dHod_R)/\G_m\;.
\end{equation*}
\end{prop}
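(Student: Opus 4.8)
The plan is to reduce the statement to the smooth case handled in \cref{lem:fildhod-comparisonfilteredsmooth} by writing the quasiregular semiperfectoid ring $R$ as a filtered colimit of smooth $\Z_p$-algebras and checking that both sides of the desired isomorphism commute with the relevant colimit. More precisely, I would first observe that, just as in \cref{prop:fildhod-dhodqrsp}, the stack $R^{\dHod, c}$ is actually representable by an affine formal scheme: by \cref{rem:fildhod-pullback}, $R^{\dHod, c}$ is the pullback of $(R^\N)_{t=0}\rightarrow (\Z_p^\N)_{t=0}$ along the fpqc cover $\A^1_+/\G_m\rightarrow (\Z_p^\N)_{t=0}$, and the fibre of $(R^\N)_{t=0}$ over this cover can be analysed via the filtered Cartier--Witt divisor $M\xrightarrow{d} W$ on $\A^1_+/\G_m$ from the construction preceding \cref{defi:fildhod-dhodconj}; one should identify this fibre with (the stack quotient by $\G_m$ of) an honest affine formal scheme over $\A^1_+$, using that $R$ is quasiregular semiperfectoid so that the relevant $\G_a^{\dHod,c}$-mapping space is corepresentable, in parallel with \cite[Var.\ 5.1, Cor.\ 7.18]{PFS}. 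Call the resulting filtered ring $\Fil^\bullet A$, so $R^{\dHod, c}\cong\Spf\Rees(\Fil^\bullet A)/\G_m$ for some $p$-complete filtered $\Z_p$-algebra $\Fil^\bullet A$.

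Next I would compute $\Fil^\bullet A$ by pushforward. Since $R^{\dHod, c}$ is affine over $\A^1_+/\G_m$, the pushforward $\pi_{R^{\dHod, c},*}\O_{R^{\dHod, c}}$ recovers $\Rees(\Fil^\bullet A)$ as a graded ring under the Rees equivalence. On the other hand, by \cref{lem:fildhod-comparisonfilteredsmooth} applied to smooth approximations, together with \cref{rem:fildhod-siftedcolims} (which says $R\mapsto\Fil^\conj_\bullet\widehat{\Omega}^\dHod_R$ commutes with sifted colimits), it suffices to know that the construction $R\mapsto \pi_{R^{\dHod, c},*}\O_{R^{\dHod, c}}$ likewise commutes with sifted colimits of rings. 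For the latter, one writes $R=\colim_i R_i$ as a sifted (in fact filtered) colimit of smooth $\Z_p$-algebras; then $R^{\dHod, c}=\lim_i R_i^{\dHod, c}$ as stacks over $\A^1_+/\G_m$ because the defining mapping space $\Map(\Spec\G_a^{\dHod, c}(S), -)$ sends filtered colimits of finite-type targets to filtered colimits, and for each test ring $S$ the algebra $\G_a^{\dHod, c}(S)$ is a finitely generated $S$-algebra after a $p^n=0$ truncation, exactly as in the gerbe argument in the proof of \cref{lem:fildhod-comparisonfilteredsmooth}. Combining, $\Fil^\conj_\bullet\widehat{\Omega}^\dHod_R\cong\colim_i \Fil^\conj_\bullet\widehat{\Omega}^\dHod_{R_i}\cong\colim_i \pi_{R_i^{\dHod, c},*}\O_{R_i^{\dHod, c}}\cong\pi_{R^{\dHod, c},*}\O_{R^{\dHod, c}}\cong\Rees(\Fil^\bullet A)$, which gives the claimed identification $\Fil^\bullet A\cong\Fil^\conj_\bullet\widehat{\Omega}^\dHod_R$ and hence $R^{\dHod, c}\cong\Spf\Rees(\Fil^\conj_\bullet\widehat{\Omega}^\dHod_R)/\G_m$.

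Alternatively, and perhaps more cleanly, one can bypass the colimit bookkeeping by arguing directly that $R^{\dHod,c}$ is affine (as above) and that its pushforward is the conjugate filtration: the underlying unfiltered object is $\widehat{\Omega}^\dHod_R$ by \cref{prop:fildhod-comparison}, which is concentrated in degree zero by \cref{rem:fildhod-degzero}; so $\Rees(\Fil^\bullet A)$ is a graded ring whose underlying object is $\widehat{\Omega}^\dHod_R$ and whose $B\G_m$-pullback is the Hodge cohomology $\bigoplus_n L\widehat{\Omega}^n_R[-n]$ (concentrated in degree zero, by \cite[Rem.\ 4.21]{THHandPAdicHodgeTheory}) via \cref{thm:fildrstack-comparison} and \cref{rem:fildhod-pullback}, matching $\gr^\conj_n$ of the conjugate filtration by \eqref{eq:fildhod-htcomparison2}; since both $\Fil^\bullet A$ and $\Fil^\conj_\bullet\widehat{\Omega}^\dHod_R$ are complete filtrations on the same degree-zero object with the same graded pieces, a standard dévissage using completeness (as in \cite[Ex.\ 2.2.3]{FGauges}) identifies them, provided one checks that the identification of graded pieces is compatible with the filtration structure maps — this compatibility is exactly what \cref{lem:fildhod-comparisonfilteredsmooth} furnishes after passing to smooth approximations, so the two routes ultimately rely on the same input.

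The main obstacle I anticipate is the representability/affineness claim: showing that $R^{\dHod, c}$ is an affine formal scheme over $\A^1_+/\G_m$ (equivalently, that the fibre of $(R^\N)_{t=0}$ over the cover $\A^1_+/\G_m$ is of the form $\Spf(-)/\G_m$). In \cref{prop:fildhod-dhodqrsp} this was deduced from the identification with a relative Hodge--Tate stack and \cite[Cor.\ 7.18]{PFS}; here one needs the filtered analogue, which requires unwinding the explicit filtered Cartier--Witt divisor $M\xrightarrow{d}W$ on $\A^1_+/\G_m$ and recognising $\G_a^{\dHod,c}$ over this base as (a twisted form of) the de Rham-type algebra stack whose associated mapping space is corepresentable when the target is quasiregular semiperfectoid. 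Once affineness is in hand, the remaining steps — the sifted-colimit compatibility and the completeness dévissage — are routine given \cref{lem:fildhod-comparisonfilteredsmooth}, \cref{rem:fildhod-siftedcolims}, and \cref{rem:fildhod-degzero}.
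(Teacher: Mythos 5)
You have correctly assembled the ingredients (the smooth case \cref{lem:fildhod-comparisonfilteredsmooth}, sifted-colimit compatibility \cref{rem:fildhod-siftedcolims}, degree-zero concentration \cref{rem:fildhod-degzero}, reduction to graded pieces), but the architecture of both of your routes rests on a claim you do not prove and which the paper deliberately avoids: you posit at the outset that $R^{\dHod,c}$ is of the form $\Spf(\text{filtered ring})/\G_m$ and only then compute the filtered ring. You flag this affineness yourself as ``the main obstacle'', and it is a genuine gap: the unfiltered statement \cref{prop:fildhod-dhodqrsp} obtains affineness by identifying $R^\dHod$ with a \emph{relative} Hodge--Tate stack for the prism $(\Z_p[\![\tilde{p}]\!],(\tilde{p}))$ and citing \cite[Cor.\ 7.18]{PFS}, but the filtered Cartier--Witt divisor $M\xrightarrow{d}W$ over $\A^1_+/\G_m$ does not arise from a prism, so there is no relative result to quote and the asserted ``parallel'' argument is not supplied. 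Your first route has a second gap: from $R=\colim_i R_i$ one gets $R^{\dHod,c}\cong\lim_i R_i^{\dHod,c}$ as an \emph{inverse} limit of non-affine, gerbe-like stacks, and the step $\colim_i\pi_{R_i^{\dHod,c},*}\O\cong\pi_{R^{\dHod,c},*}\O$ --- that derived global sections convert this inverse limit of stacks into a colimit of rings --- is automatic for affines but not for gerbes, so it tacitly presupposes the very affineness you have not yet established.

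The paper's proof reverses the logical order and thereby dissolves the problem. Since $R\mapsto\Fil^\conj_\bullet\widehat{\Omega}^\dHod_R$ is the left Kan extension of its restriction to smooth algebras (\cref{rem:fildhod-siftedcolims}) and agrees there with $\pi_{(-)^{\dHod,c},*}\O$ (\cref{lem:fildhod-comparisonfilteredsmooth}), the universal property of left Kan extension yields a natural \emph{one-directional} map of commutative algebra objects $\Rees(\Fil^\conj_\bullet\widehat{\Omega}^\dHod_R)\rightarrow\pi_{R^{\dHod,c},*}\O_{R^{\dHod,c}}$; no claim that the right-hand side commutes with sifted colimits is needed. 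Because the source is concentrated in degree zero (\cref{rem:fildhod-degzero}), adjunction converts this into a map of stacks $R^{\dHod,c}\rightarrow\Spf\Rees(\Fil^\conj_\bullet\widehat{\Omega}^\dHod_R)/\G_m$ over $\A^1_+/\G_m$, and one checks that it is an isomorphism by pulling back to the stratification by $\G_m/\G_m$ and $B\G_m$, where it becomes the isomorphism of \cref{prop:fildhod-dhodqrsp} and the identification $R^\Hod\cong\Spf(\bigoplus_n L\widehat{\Omega}^n_R[-n])/\G_m$ from the proof of \cite[Thm.\ 5.5.10]{FGauges}. Affineness of $R^{\dHod,c}$ is then a \emph{consequence} of the isomorphism rather than an input. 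To repair your outline, replace ``prove affineness, then compute the ring'' by ``construct the candidate map via left Kan extension and adjunction, then verify it on the two strata''.
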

\begin{proof}
We mimic the proof of \cite[Thm.\ 5.5.10]{FGauges}. As in loc.\ cit., we will work with derived stacks, i.e.\ stacks on animated rings, since we will have to ponder derived pullbacks, which are more natural in the world of derived stacks; however, it will turn out in the end that all stacks we consider were classical to begin with. For this, we extend all the stacks we have introduced so far to the category of animated rings by left Kan extension as in \cite[Rem.\ 5.5.13]{FGauges}. To begin the proof, observe that we can define the conjugate filtration on diffracted Hodge cohomology of $p$-complete animated rings via left Kan extension from the full subcategory of smooth $\Z_p$-algebras since it commutes with sifted colimits \EDIT{by \cref{rem:fildhod-siftedcolims}}. Thus, by \cref{lem:fildhod-comparisonfilteredsmooth}, there is a natural map
\begin{equation*}
\Rees(\Fil^\conj_\bullet\widehat{\Omega}^\dHod_R)\rightarrow \pi_{R^{\dHod, c}, *}\O_{R^{\dHod, c}}
\end{equation*}
of commutative algebra objects in $\D(\A^1_+/\G_m)$. \EEDIT{Recalling that the left-hand side is concentrated in degree zero by \cref{rem:fildhod-degzero}, we see that, by adjunction, the above defines a morphism}
\begin{equation*}
R^{\dHod, c}\rightarrow \Spf\Rees(\Fil^\conj_\bullet\widehat{\Omega}^\dHod_R)/\G_m
\end{equation*}
over $\A^1_+/\G_m$. To check that this is an isomorphism, we may pull back to $\G_m/\G_m\subseteq\A^1_+/\G_m$ and $B\G_m\subseteq\A^1_+/\G_m$ as these form a stratification of $\A^1_+/\G_m$.\footnote{\EEDIT{Here we are using that, given any animated ring $A$ and an element $f\in\pi_0(A)$, a map $\tau: M\rightarrow N$ in $\D(A)$ is an isomorphism if and only if both $\tau[1/f]$ and $\tau\tensorL_A A/f$ are isomorphisms.}} However, after pulling back to $\G_m/\G_m$, the above morphism becomes
\begin{equation*}
R^\dHod\rightarrow \Spf\widehat{\Omega}^\dHod_R\;,
\end{equation*}
which can be verified to agree with the isomorphism from \cref{prop:fildhod-dhodqrsp}, and after pulling back to $B\G_m$, we obtain
\begin{equation*}
R^\Hod\rightarrow \Spf\left(\bigoplus_n L\widehat{\Omega}^n_R[-n]\right)/\G_m\;,
\end{equation*}
which is also an isomorphism, see the proof of \cite[Thm.\ 5.5.10]{FGauges}.
\end{proof}

\EDIT{Finally, we have made all the necessary preparations to prove \cref{thm:fildhod-comparisonfiltered} as announced in the beginning of the section.

\begin{proof}[Proof of \cref{thm:fildhod-comparisonfiltered}]
Since the Nygaard-filtered prismatisation takes quasisyntomic covers to fpqc covers by \cite[Cor.\ 6.12.8]{GardnerMadapusi}, the same is true for the conjugate-filtered diffracted Hodge stack by base change. Thus, by quasisyntomic descent for $\Fil^\conj_\bullet R\Gamma_\dHod(X)$, see \cite[Rem.\ 4.7.9]{APC}, we are reduced to the case where $X=\Spf R$ for a quasiregular semiperfectoid ring $R$. However, in this case, the claim follows from \cref{prop:fildhod-dhodconjqrsp}.
\end{proof}
}

\begin{rem}
Observe that the mod $p$ reduction $(X^{\dHod, c})_{p=0}$ of the conjugate-filtered diffracted Hodge stack agrees with the conjugate-filtered de Rham stack of $X_{p=0}$ defined in \cite[§2.7]{FGauges} up to a Frobenius twist. This reflects the fact that, for smooth $X$, the isomorphism
\begin{equation*}
\widehat{\Omega}_{X^{(1)}}^\dHod\tensor_{\Z_p} \F_p\cong F_*\widehat{\Omega}_{X_{p=0}/\F_p}^\bullet\;,
\end{equation*}
coming from the crystalline comparison theorem for prismatic cohomology, see \cite[Rem.\ 4.7.18]{FGauges}, is compatible with the respective conjugate filtrations on either side. Here, $X^{(1)}\coloneqq \phi^* X$ denotes the pullback of $X$ along the Frobenius of $\Z_p$ and $F: X_{p=0}\rightarrow X_{p=0}^{(1)}$ is the relative Frobenius for $X_{p=0}$ over $\F_p$.
\end{rem}

\subsection{The Sen operator}

To begin our analysis \EDIT{of the stack $(X^\N)_{t=0}$ and to relate it to the Sen operator on the conjugate-filtered diffracted Hodge cohomology of $X$}, we first study the stack \EDIT{$(\Z_p^\N)_{t=0}$} and its category of quasi-coherent complexes.

\begin{prop}
\label{prop:fildhod-zpntzero}
There is a natural identification
\begin{equation*}
(\Z_p^\N)_{t=0}\cong \G_{a, +}^\dR/\G_m=\G_{a, +}/(\G_a^\sharp\rtimes\G_m)\;,
\end{equation*}
where the subscript $(-)_+$ indicates that $\G_m$ acts on $\G_a$ by placing the coordinate $u$ in grading degree $1$, as before.
\end{prop}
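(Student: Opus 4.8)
The plan is to realise $(\Z_p^\N)_{t=0}$ as a quotient stack by exploiting the fpqc cover $c\colon\A^1_+/\G_m\to(\Z_p^\N)_{t=0}$ coming from the proof of \cite[Prop.\ 5.3.7]{FGauges} (recalled in the discussion preceding \cref{defi:fildhod-dhodconj}). Being an fpqc cover, $c$ is an effective epimorphism, so $(\Z_p^\N)_{t=0}$ is the geometric realisation of the \v{C}ech nerve of $c$; it therefore suffices to identify this \v{C}ech nerve with the bar construction of a $\G_a^\sharp$-action on $\A^1_+/\G_m$. Concretely, I expect an identification
\begin{equation*}
\A^1_+/\G_m\times_{(\Z_p^\N)_{t=0}}\A^1_+/\G_m\;\cong\;(\A^1_+/\G_m)\times\G_a^\sharp
\end{equation*}
with $\G_a^\sharp$ placed in grading degree $1$, under which the two face maps become the projection and the translation action $u\mapsto u+\iota(a)$ (where $\iota\colon\G_a^\sharp\to\G_a$ is the canonical map of \cref{lem:admissible-fundamentalseq}, suitably twisted by $\L$); granting this together with the compatibility with the groupoid composition (which on the right is addition in $\G_a^\sharp$), one obtains $(\Z_p^\N)_{t=0}\cong(\A^1_+/\G_m)/\G_a^\sharp$, and since $\G_m$ acts on the translating $\G_a^\sharp$ with weight $1$ this quotient is precisely $\G_{a,+}/(\G_a^\sharp\rtimes\G_m)=\G_{a,+}^\dR/\G_m$.

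To carry out the \v{C}ech computation I would first unwind $c$. Over a $p$-nilpotent ring $S$, a point of $\A^1_+/\G_m$ is a pair $(\L,u\colon\O_S\to\L)$, and the corresponding filtered Cartier--Witt divisor is $M_u\xrightarrow{d}W$, where $M_u$ is the pushout of $\G_a^\sharp\hookrightarrow W$ along $u^\sharp\colon\G_a^\sharp\to\V(\L)^\sharp$ (so that its admissible sequence is the pushout of $0\to\G_a^\sharp\to W\to F_*W\to 0$ along $u^\sharp$), and $d$ is multiplication by $V(1)$ on the $W$-summand and $0$ on $\V(\L)^\sharp$. A point of the fibre product is then a triple $(u_1,u_2,\phi)$ with $\phi\colon(M_{u_1},d)\xrightarrow{\sim}(M_{u_2},d)$. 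By functoriality and uniqueness of the admissible sequence, $\phi$ restricts to an isomorphism $\V(\L_1)^\sharp\cong\V(\L_2)^\sharp$, i.e.\ an isomorphism $\L_1\cong\L_2$ (\cref{lem:admissible-classification}), and induces an automorphism $\ol\phi$ of $F_*W$; since $d$ descends along $M_u\twoheadrightarrow F_*W$ to the Verschiebung $V\colon F_*W\to W$, which is injective, the relation $d\circ\phi=d$ forces $\ol\phi=\id$. After fixing the identification $\L_1\cong\L_2$, the remaining choices of $\phi$ form a torsor under $\sHom_W(F_*W,\V(\L)^\sharp)$, while the existence of $\phi$ amounts to the equality $(u_1^\sharp)_* e=(u_2^\sharp)_* e$ in $\sExt^1_W(F_*W,\V(\L)^\sharp)$, where $e\in\sExt^1_W(F_*W,\G_a^\sharp)$ is the class of the fundamental extension; assembling these two contributions and tracking the grading should produce exactly the copy of $\G_a^\sharp$ together with the groupoid structure of the translation action.

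The main obstacle is precisely this assembly: one must compute $\sHom_W(F_*W,\G_a^\sharp)$ and the map $u\mapsto(u^\sharp)_* e$ well enough to see that the total space of triples $(u_1,u_2,\phi)$ is $(\A^1_+/\G_m)\times\G_a^\sharp$ with the translation action — in other words, to pin down $(\Z_p^\N)_{t=0}$ as genuinely $\G_{a,+}^\dR/\G_m$ and not a gerbe-like variant of it. For this one can draw on the computations of $\sHom$- and $\sExt$-groups among the basic $W$-module schemes in \cite[§2.6 and §5.2]{FGauges}, together with the exact sequence of \cref{lem:admissible-fundamentalseq} and the fact that $W$ is free of rank one as a module over itself. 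A possibly cleaner parallel route is to descend $c$ directly along $\A^1_+/\G_m\to\G_{a,+}^\dR/\G_m$: the pair $M_u\xrightarrow{d}W$ depends on $u$ only through $u^\sharp$ and changes merely by a canonical isomorphism upon translating $u$ by an element of $\G_a^\sharp$, so $c$ is $\G_a^\sharp$-invariant and descends to a morphism $\G_{a,+}^\dR/\G_m\to(\Z_p^\N)_{t=0}$, which one then checks to be an equivalence after pulling back to the common cover $\A^1_+/\G_m$ — again reducing to the degree-$1$ identification above.
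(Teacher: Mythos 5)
The paper offers no argument of its own here: its proof is the single line ``See \cite[Prop.\ 5.3.7]{FGauges}.'' Your sketch is, in outline, a correct reconstruction of the argument behind that citation. The unwinding of the cover $\A^1_+/\G_m\to(\Z_p^\N)_{t=0}$ matches the pushout diagram recalled before \cref{defi:fildhod-dhodconj}; the reduction of an isomorphism of filtered Cartier--Witt divisors to a triple (isomorphism $\L_1\cong\L_2$, the identity on $F_*W$, an isomorphism of extensions) is sound, and your injectivity-of-$V$ argument for the quotient part is the right one --- note that, conversely, once the induced map on $F_*W$ is the identity, compatibility with $d$ is automatic, since $d$ vanishes on the $\sharp$-invertible part and descends to $V$ on $F_*W$. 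The ``main obstacle'' you flag is real but is precisely the content of \cite[Prop.\ 5.2.1, Lem.\ 5.2.8]{FGauges} (quoted again in the appendix of this paper): applying $\sHom_W(-,\V(\L)^\sharp)$ to the fundamental extension of \cref{lem:admissible-fundamentalseq} and using $\sHom_W(W,\V(\L)^\sharp)\cong\V(\L)^\sharp$, $\sExt^1_W(W,\V(\L)^\sharp)=0$ and $\sHom_W(\G_a^\sharp,\V(\L)^\sharp)\cong\V(\L)$ gives an exact sequence
\begin{equation*}
0\rightarrow\sHom_W(F_*W,\V(\L)^\sharp)\rightarrow\V(\L)^\sharp\rightarrow\V(\L)\rightarrow\sExt^1_W(F_*W,\V(\L)^\sharp)\rightarrow 0\;,
\end{equation*}
which simultaneously shows that $(u_1^\sharp)_*e=(u_2^\sharp)_*e$ if and only if $u_2-u_1$ lifts along $\V(\L)^\sharp\rightarrow\V(\L)$, and that the set of such isomorphisms is a torsor under $\sHom_W(F_*W,\V(\L)^\sharp)=\Ker(\V(\L)^\sharp\rightarrow\V(\L))$, i.e.\ under the stabiliser of the translation action. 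These two facts assemble the \v{C}ech groupoid into the action groupoid of $\G_a^\sharp$ (in weight $1$) acting on $\G_{a,+}$, which rules out the gerbe-like variant you were worried about. To turn the sketch into a complete proof you only need to import these $\sHom$/$\sExt$ computations from \cite[\S 2.6, \S 5.2]{FGauges} and verify compatibility with the groupoid composition, which follows from functoriality of pushout of extensions; your alternative ``descend $c$ directly'' route is equivalent and equally viable.
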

\begin{proof}
See \cite[Prop.\ 5.3.7]{FGauges}.
\end{proof}

\begin{lem}
\label{lem:fildhod-complexeszpntzero}
There is an equivalence of categories
\begin{equation*}
\D((\Z_p^\N)_{t=0})\cong \widehat{\D}_{\gr, D-\nilp}(\Z_p\{u, D\}/(Du-uD-1))\;,
\end{equation*}
where $u$ and $D$ have grading degree \EEDIT{$1$} and \EEDIT{$-1$}, respectively, and the \EDIT{local} nilpotence of $D$ is only demanded mod $p$ here.
\end{lem}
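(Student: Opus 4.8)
The plan is to combine the identification $(\Z_p^\N)_{t=0}\cong \G_{a,+}/(\G_a^\sharp\rtimes\G_m)$ from \cref{prop:fildhod-zpntzero} with the description of $\D(B\G_m^\sharp)$ from \cref{prop:fildhod-bgmsharp} and standard facts about quasi-coherent complexes on quotient stacks. Concretely, write $\G=\G_a^\sharp\rtimes\G_m$, so that $(\Z_p^\N)_{t=0}=[\G_{a,+}/\G]$. Since $\G_m$ acts on the coordinate $u$ with weight $1$, pulling back along the smooth cover $\Spf\Z_p=\{0\}\hookrightarrow\G_{a,+}$ (equivalently, first taking the $\G$-quotient stack $B\G$ and pulling $\G_{a,+}$ back to it) reduces the computation to understanding $\D(B\G)$ together with the extra datum coming from the $\G_a$-direction of $\G_{a,+}$. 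I would organize the proof in the following steps.

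\textit{Step 1: Compute $\D(B\G)$.} Using $B\G=[B\G_m^\sharp/\G_m]$ (or directly descending along $\Spf\Z_p\to B\G$), combine \cref{prop:fildhod-bgmsharp}, which gives $\D(B\G_m^\sharp)\cong\widehat\D_{(\Theta^p-\Theta)-\nilp}(\Z_p[\Theta])$ with $\Theta$ in some grading degree, with the further $\G_m$-quotient. Taking $\G_m$-equivariant objects turns a module into a graded module, and the operator $\Theta$ — being the "grading operator" of the $\G_a^\sharp$-action — becomes, on a graded module $M=\bigoplus_n M_n$, the operator acting by $n$ on $M_n$ \emph{up to a nilpotent correction}; more precisely, writing $\Theta=\Theta_{\mathrm{ss}}+\Theta_{\mathrm{nilp}}$ where $\Theta_{\mathrm{ss}}$ is multiplication by $n$ on $M_n$, the content of $(\Theta^p-\Theta)$-nilpotence mod $p$ is exactly that $\Theta_{\mathrm{nilp}}$ acts nilpotently mod $p$. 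Thus $\D(B\G)$ is equivalent to graded $p$-complete $\Z_p$-complexes $M=\bigoplus_n M_n$ equipped with a degree-preserving operator $N$ (namely $N=\Theta-\Theta_{\mathrm{ss}}$) that is locally nilpotent mod $p$. The role of $D$ will be the generator of the $\G_a$-direction.

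\textit{Step 2: Incorporate the $\G_a$ in $\G_{a,+}$.} Now $\Spec\Z_p[u]=\G_{a,+}$ sits over $B\G$, and $[\G_{a,+}/\G]$ has the feature that the $\G_m$-weight of $u$ is $1$ while $\G_a^\sharp$ acts on $\G_{a,+}$ through its canonical $\G_a$-action; the generator $D$ of the dual Lie algebra / the infinitesimal action records differentiation $\partial_u$. Descent along $\G_{a,+}\to[\G_{a,+}/\G]$ then says $\D((\Z_p^\N)_{t=0})$ is the category of graded $\Z_p[u]$-modules ($u$ in degree $1$) with a $\G$-equivariant structure; spelling this out, the $\G_a^\sharp$-action produces an operator $D$ of degree $-1$ (since $\partial_u$ lowers $u$-degree by $1$) satisfying the Leibniz relation $D(uv)=v+uD(v)$, i.e. exactly the commutator relation $Du-uD=1$ in $\Z_p\{u,D\}$, and the nilpotence-mod-$p$ condition on $\Theta$ from Step 1 transports to local nilpotence mod $p$ of $D$ (the point being that on the cover $u$ is inverted/irrelevant and $\Theta$ is intertwined with $uD$, so $uD$ resp. $D$ carries the nilpotent part). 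This identifies $\D((\Z_p^\N)_{t=0})$ with $\widehat\D_{\gr,D-\nilp}(\Z_p\{u,D\}/(Du-uD-1))$, with $u$ in degree $1$ and $D$ in degree $-1$, as claimed.

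\textit{The main obstacle} I anticipate is Step 2: correctly matching the differential operator $D$ arising from the $\G_a^\sharp$-action on $\G_{a,+}$ — and in particular pinning down the sign and the grading degree so that the relation is exactly $Du-uD=1$ with $u$ in degree $+1$ and $D$ in degree $-1$ — and then verifying that the "$(\Theta^p-\Theta)$ locally nilpotent mod $p$" condition on $B\G_m^\sharp$ translates precisely to "$D$ locally nilpotent mod $p$" rather than some condition on $uD$. Here one should use that the equivalence is checked after pulling back along the faithfully flat cover $\G_{a,+}\to(\Z_p^\N)_{t=0}$, reduce mod $p$, and argue that since $D$ acts locally nilpotently on a graded module (decreasing degree by $1$) iff it does so mod $p$ in each cohomological degree, the condition is the natural one; the analogue of \cref{prop:fildhod-bgmsharp} for the non-abelian group $\G_a^\sharp\rtimes\G_m$ is really the source of this, and one can either invoke \cite[§3.5]{APC} directly or redo the short descent computation by hand. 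Once the dictionary $\Theta\leftrightarrow uD$ (up to grading shift) is established, everything else is a formal manipulation with graded modules over the Weyl-type algebra $\Z_p\{u,D\}/(Du-uD-1)$.
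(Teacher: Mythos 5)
Your overall strategy --- descend along the cover $\G_{a,+}\to\G_{a,+}/(\G_a^\sharp\rtimes\G_m)$, interpret the $\G_a^\sharp$-equivariance as an operator $D$ satisfying the Weyl relation $Du-uD=1$ via the Leibniz rule for $\partial_u$, and interpret the $\G_m$-equivariance as a grading with $u$ in degree $1$ and $D$ in degree $-1$ --- is exactly the paper's. But your Step 1 contains a genuine error that then propagates into your treatment of the nilpotence condition. The group acting is $\G_a^\sharp\rtimes\G_m$, so $B\G=[B\G_a^\sharp/\G_m]$, not $[B\G_m^\sharp/\G_m]$, and \cref{prop:fildhod-bgmsharp} (which describes $\D(B\G_m^\sharp)$ via an operator $\Theta$ with $\Theta^p-\Theta$ locally nilpotent mod $p$) is not the relevant input. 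The correct input is Cartier duality for $\G_a^\sharp$: its Cartier dual is $\widehat{\G}_a$, so an $\O(\G_a^\sharp)$-comodule structure on a $p$-complete module $M$ is the same datum as a single endomorphism $D$ that is locally nilpotent mod $p$, via $m\mapsto\sum_{i\geq 0}D^i(m)\otimes x^i/i!$ (this is \cite[Prop.\ 2.4.4]{FGauges}, which is what the paper's proof invokes). In particular there is no operator ``acting by $n$ on $M_n$ up to a nilpotent correction'' in this picture: the operator produced by the $\G_a^\sharp$-direction has $\G_m$-weight $-1$, so a decomposition $\Theta=\Theta_{\mathrm{ss}}+\Theta_{\mathrm{nilp}}$ with $\Theta_{\mathrm{ss}}$ the grading operator is meaningless for it, and the condition ``$\Theta^p-\Theta$ locally nilpotent mod $p$'' never enters. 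Consequently the mechanism you propose for ``transporting'' the nilpotence condition to $D$ --- which you correctly single out as the main obstacle --- does not get off the ground.

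The repair is to replace Step 1 by the $\G_a^\sharp$-statement $\D(\G_a/\G_a^\sharp)\cong\D_{D-\nilp}(\Z_p\{u,D\}/(Du-uD-1))$, obtained by writing out what it means for the comodule structure $m\mapsto\sum_i D^i(m)\otimes x^i/i!$ on a $p$-complete $\Z_p\langle u\rangle$-module to be compatible with the translation action of $\G_a^\sharp$ on $\G_a$, whose coaction on $\Z_p\langle u\rangle$ is governed by $\frac{\d}{\d u}$. This yields the identities $D^i(fm)=\sum_{k+\ell=i}\binom{i}{k}\bigl(\frac{\d^\ell}{\d^\ell u}f\bigr)D^k(m)$, all of which follow by induction on monomials from the single relation $D(um)=uD(m)+m$, i.e.\ from $Du-uD=1$. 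The condition that $D$ be locally nilpotent mod $p$ is then not something to be derived from a condition on $\Theta$ or on $uD$ --- it is built into the notion of an $\O(\G_a^\sharp)$-comodule structure from the start. With that substitution, your Step 2 (adding the $\G_m$-equivariance as a compatible grading) goes through and coincides with the paper's argument.
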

\begin{proof}[Proof]
Using \cref{prop:fildhod-zpntzero} to identify $(\Z_p^\N)_{t=0}$ as $\G_{a, +}^\dR/\G_m$, the statement is completely analogous to the result of \cite[§6.4.5]{FGauges}, which is in the mod $p$ setting. Nevertheless, we briefly \EDIT{lay out} the argument and first show that
\begin{equation}
\label{eq:fildhod-complexeszpntzero}
\D(\G_a^\dR)=\D(\G_a/\G_a^\sharp)\cong \D_{D-\nilp}(\Z_p\{u, D\}/(Du-uD-1))\;;
\end{equation}
then the claim will follow as specifying a $\G_m$-equivariant structure \EEDIT{on a quasi-coherent complex on $\G_a^\dR$} then amounts to specifying a grading \EEDIT{on its image under the equivalence above which is} compatible with the \EEDIT{grading on} $\Z_p\{u, D\}/(Du-uD-1)$.

To prove (\ref{eq:fildhod-complexeszpntzero}), first recall that the Cartier dual of $\G_a^\sharp$ is the formal completion $\widehat{\G}_a$ of $\G_a$ at the origin, see \cite[Prop.\ 2.4.4]{FGauges}, and that an endomorphism $D: M\rightarrow M$ of a $p$-complete $\Z_p$-module $M$ which is locally nilpotent mod $p$ corresponds to the $\O(\G_a^\sharp)$-comodule structure on $M$ given by
\begin{equation*}
M\rightarrow M\widehat{\tensor}_{\Z_p} \O(\G_a^\sharp)\;, \hspace{0.5cm} m\mapsto \sum_{i\geq 0} D^i(m)\tensor\frac{x^i}{i!}\;,
\end{equation*}
see \cite[Prop.\ 2.4.4]{FGauges}; here, $\O(\G_a^\sharp)$ denotes the $p$-adic completion of $\Z_p[x, \tfrac{x^2}{2!}, \tfrac{x^3}{3!}, \dots]$. Thus, equipping a $p$-complete $\Z_p\langle u\rangle$-module $M$ with an $\O(\G_a^\sharp)$-equivariant structure means specifiying an endomorphism $D: M\rightarrow M$ which is locally nilpotent mod $p$ such that the diagram
\begin{equation*}
\begin{tikzcd}
M\widehat{\tensor}_{\Z_p} \Z_p\langle u\rangle\ar[r]\ar[d] & M\ar[r] & M\widehat{\tensor}_{\Z_p} \O(\G_a^\sharp) \\
(M\widehat{\tensor}_{\Z_p} \O(\G_a^\sharp))\widehat{\tensor}_{\Z_p} (\Z_p\langle u\rangle\widehat{\tensor}_{\Z_p} \O(\G_a^\sharp))\ar[rr, "\cong"] && (M\widehat{\tensor}_{\Z_p} \Z_p\langle u\rangle)\widehat{\tensor}_{\Z_p} (\O(\G_a^\sharp)\widehat{\tensor}_{\Z_p} \O(\G_a^\sharp))\ar[u]
\end{tikzcd}
\end{equation*}
commutes. Now observing that the endomorphism of $\Z_p\langle u\rangle$ corresponding to the action of $\G_a^\sharp$ on $\G_a$ is given by the usual derivative $\frac{\d}{\d u}: \Z_p\langle u\rangle\rightarrow\Z_p\langle u\rangle$, we see that the condition above translates to
\begin{equation*}
D^i(fm)=\sum_{k+\ell=i} \binom{i}{k}\left(\frac{\d^\ell}{\d^\ell u} f\right) D^k(m)
\end{equation*}
for any $i\geq 0$ and $f\in\Z_p\langle u\rangle, m\in M$. \EDIT{As this is clearly an additive condition, it suffices to demand this for monomials $f=u^n$, where $n\geq 0$. However, a straightforward induction on $n$ then shows} that the relation $D(um)=uD(m)+m$ already implies all the others, hence specifying a $\G_a^\sharp$-equivariant structure on $M$ amounts to specifying a lift of $M$ to a $\Z_p\{u, D\}/(Du-uD-1)$-module such that $D$ acts locally nilpotently on $M$ mod $p$ and we are done.
\end{proof}

\begin{rem}
\label{rem:fildhod-complexeszpntzeroexplicit}
In light of \cref{lem:fildhod-complexeszpntzero}, the datum of a quasi-coherent complex $E$ on $(\Z_p^\N)_{t=0}$ may be thought of as follows: Via the Rees equivalence, $E$ corresponds to an increasing filtration $\Fil_\bullet V$ of $p$-complete $\Z_p$-complexes equipped with an endomorphism $D: \Fil_\bullet V\rightarrow\Fil_{\bullet-1} V$ such that the diagram
\begin{equation}
\label{eq:fildhod-complexeszpntzeroexplicit}
\begin{tikzcd}
\dots\ar[r] & \Fil_{-1} V\ar[r, "u"]\ar[d, "uD+1"] & \Fil_0 V\ar[r, "u"]\ar[d, "uD"] & \Fil_1 V\ar[r, "u"]\ar[d, "uD-1"] & \dots\ar[r, "u"] & \Fil_i V\ar[r]\ar[d, "uD-i"] & \dots \\
\dots\ar[r] & \Fil_{-1} V\ar[r, "u"] & \Fil_0 V\ar[r, "u"] & \Fil_1 V\ar[r, "u"] & \dots\ar[r, "u"] & \Fil_i V\ar[r] & \dots
\end{tikzcd}
\end{equation}
commutes. Note that the filtration $\Fil_\bullet V$ precisely corresponds to the pullback of $E$ to $\A^1_+/\G_m$ under the Rees equivalence.
\end{rem}

The Sen operator on the conjugate-filtered diffracted Hodge cohomology of $X$ is captured by the stack $(X^\N)_{t=0}$ in the following way:

\begin{prop}
\label{prop:fildhod-comparisonsen}
Let $X$ be a bounded $p$-adic formal scheme and assume that $X$ is $p$-quasisyntomic and qcqs. Under the equivalence
\begin{equation*}
\D((\Z_p^\N)_{t=0})\cong \widehat{\D}_{\gr, D-\nilp}(\Z_p\{u, D\}/(Du-uD-1))
\end{equation*}
from \cref{lem:fildhod-complexeszpntzero}, the underlying graded \EEDIT{$\Z_p\langle u\rangle$-complex} of $\pi_{X^\N, *}\O_{(X^\N)_{t=0}}$ identifies \EDIT{with} $\Fil_\bullet^\conj R\Gamma_\dHod(X)$ under the Rees equivalence \EEDIT{and, under this identification, the operator 
\begin{equation*}
uD-i: \Fil_i^\conj R\Gamma_\dHod(X)\rightarrow \Fil_i^\conj R\Gamma_\dHod(X)
\end{equation*}
identifies with the Sen operator on $\Fil_i^\conj R\Gamma_\dHod(X)$ for all $i\in\Z$.}
\end{prop}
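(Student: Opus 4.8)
The proof will follow the template of \cref{thm:fildhod-comparisonfiltered}: reduce to the quasiregular semiperfectoid case by quasisyntomic descent and there use the explicit descriptions already at hand. As in the proof of that theorem, the formation of $(X^\N)_{t=0}$ carries quasisyntomic covers to fpqc covers (by \cite[Cor.\ 6.12.8]{GardnerMadapusi} and base change), and the conjugate-filtered diffracted Hodge cohomology together with its Sen operator assembles into a quasi-coherent complex on $\Z_p^\HT$, which satisfies quasisyntomic descent by \cite[Rem.\ 4.7.9]{APC}. Hence I may assume $X=\Spf R$ with $R$ quasiregular semiperfectoid, in which case everything in sight is concentrated in degree zero by \cref{rem:fildhod-degzero} and \cref{prop:fildhod-dhodconjqrsp}.

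The underlying graded $\Z_p\langle u\rangle$-complex is handled by flat base change along the fpqc cover $\A^1_+/\G_m\to(\Z_p^\N)_{t=0}$ of \cref{defi:fildhod-dhodconj}. Applied to the cartesian square of \cref{rem:fildhod-pullback}, it identifies the pullback of $\pi_{X^\N,*}\O_{(X^\N)_{t=0}}$ along this cover with $\pi_{X^{\dHod,c},*}\O_{X^{\dHod,c}}$, which in turn is $\Fil^\conj_\bullet R\Gamma_\dHod(X)$ under the Rees equivalence by \cref{thm:fildhod-comparisonfiltered}. Since this cover realises $(\Z_p^\N)_{t=0}=\G_{a,+}/(\G_a^\sharp\rtimes\G_m)$ over $\A^1_+/\G_m=\G_{a,+}/\G_m$ by further dividing out the $\G_a^\sharp$, pullback along it is precisely the functor of \cref{lem:fildhod-complexeszpntzero} that forgets the operator $D$, i.e.\ the passage to the underlying graded $\Z_p\langle u\rangle$-complex; this gives the first assertion.

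It remains to identify the operator. By \cref{rem:fildhod-complexeszpntzeroexplicit}, the datum of $\pi_{X^\N,*}\O_{(X^\N)_{t=0}}$ not already recorded by the filtered complex $\Fil^\conj_\bullet R\Gamma_\dHod(X)$ amounts to an operator $\Theta_\mathcal{E}$ preserving the conjugate filtration such that, on each $\Fil^\conj_i R\Gamma_\dHod(X)$, the endomorphism $\Theta_\mathcal{E}+i$ factors through $\Fil^\conj_{i-1}R\Gamma_\dHod(X)$ (the required mod-$p$ local nilpotence of $D$ being automatic, as the conjugate filtration vanishes in negative degrees and so $D$ is already nilpotent); by \cite[Rem.\ 4.9.10]{APC} the Sen operator $\Theta$ has this same property, so it suffices to prove $\Theta_\mathcal{E}=\Theta$. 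In the quasiregular semiperfectoid case I would read this off the explicit isomorphism $R^{\dHod,c}\cong\Spf\Rees(\Fil^\conj_\bullet\widehat{\Omega}^\dHod_R)/\G_m$ of \cref{prop:fildhod-dhodconjqrsp}: here the $\G_a^\sharp$-equivariance that $\pi_{R^{\dHod,c},*}\O_{R^{\dHod,c}}$ carries along the cover $\A^1_+/\G_m\to(\Z_p^\N)_{t=0}$ is the one dictated by the defining diagram of that cover in \cref{defi:fildhod-dhodconj}, whose top composite $W\to M\to W$ is $V(1)$, that is, the Hodge--Tate point $\eta\colon\Spf\Z_p\to\Z_p^\HT\cong B\G_m^\sharp$ of \cite[Thm.\ 3.4.13]{APC}. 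Chasing this diagram, and recalling that $R^\dHod\cong\Spf\widehat{\Omega}^\dHod_R$ is by \cref{defi:fildhod-dhod} and \cref{prop:fildhod-dhodqrsp} the fibre of $R^\HT\to\Z_p^\HT$ over $\eta$ so that the $\G_m^\sharp$-action defining the Sen operator is visible on it, should match the $\G_a^\sharp$-part of $\Theta_\mathcal{E}$ with the nilpotent correction to the Sen operator, while the $\G_m$-grading contributes the remaining summand $-i$ on $\Fil^\conj_i$ via the degree shift in the Rees equivalence; altogether $\Theta_\mathcal{E}=uD-i=\Theta$.

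The delicate point is this last matching. The operator $D$ comes from a $\G_a^\sharp$-action whereas the Sen operator comes from a $\G_m^\sharp$-action, and these are structures over the genuinely inequivalent stacks $B\G_a^\sharp$ and $B\G_m^\sharp$ — a $\G_a^\sharp$-action forces $D$ to act nilpotently mod $p$, while for the Sen operator only $\Theta^p-\Theta$ is mod-$p$ nilpotent. The two are reconciled only once the $\G_m$-grading is reinstated, and keeping careful track of how the degree shift in the Rees equivalence absorbs the summand $-i$ and leaves behind exactly the nilpotent $\G_a^\sharp$-contribution — equivalently, verifying directly that the a priori different group actions in play on $R^\dHod$ agree in the required way — is where I expect the real work to lie.
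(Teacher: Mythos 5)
Your handling of the first assertion (the underlying graded $\Z_p\langle u\rangle$-complex) is correct and matches the paper, which simply quotes \cref{thm:fildhod-comparisonfiltered} for it. The problem is the second half: you reduce the identification of the operator to matching the $\G_a^\sharp$-action that produces $D$ with the $\G_m^\sharp$-action that produces the Sen operator, via an explicit chase through the defining diagram of the cover $\A^1_+/\G_m\to(\Z_p^\N)_{t=0}$ in the quasiregular semiperfectoid case --- and then you explicitly defer that chase, flagging it as the place where the real work lies. As written this is a plan rather than a proof, and the step you leave open is exactly the content of the proposition; reconciling a $B\G_a^\sharp$-torsor datum with a $B\G_m^\sharp$-torsor datum by hand is genuinely delicate, as you note.

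The paper closes this gap by a softer argument for which you already have every ingredient on the page. You observe yourself that both $uD-i$ and $\Theta$ have the property that adding $i$ yields an operator factoring through $\Fil^\conj_{i-1}R\Gamma_\dHod(X)$; equivalently, both induce multiplication by $-i$ on $\gr^\conj_iR\Gamma_\dHod(X)$ --- for $uD-i$ because $uD$ factors through $\Fil^\conj_{i-1}$ by the commutativity of (\ref{eq:fildhod-complexeszpntzeroexplicit}), and for $\Theta$ by the Hodge--Tate comparison (\ref{eq:fildhod-htcomparison1}). Since the conjugate filtration vanishes in negative degrees and is complete, the paper then reduces the equality $uD-i=\Theta$ on $\Fil^\conj_i$ to the associated graded by induction on $i$, where both sides are the scalar $-i$. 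This sidesteps any direct comparison of the two group actions and needs no passage to the quasiregular semiperfectoid case for the operator statement. If you insist on your route, you would have to actually exhibit the identification of the $\G_a^\sharp$-equivariance on $R^{\dHod,c}$ with the $\G_m^\sharp$-equivariance on $R^\dHod$ (e.g.\ by an explicit comodule computation), which is substantially more work than the graded reduction you are one observation away from.
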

\begin{proof}
By \cref{thm:fildhod-comparisonfiltered}, we already know that the underlying filtered objects agree, so it remains to \EEDIT{prove the assertion about the Sen operator. However, as the conjugate filtration on the diffracted Hodge cohomology of $X$ is complete and the diagram (\ref{eq:fildhod-complexeszpntzeroexplicit}) is commutative, we may reduce to checking the corresponding statement on the associated graded level by induction. Here, we observe that 
\begin{equation*}
uD-i: \Fil_i^\conj R\Gamma_\dHod(X)\rightarrow \Fil_i^\conj R\Gamma_\dHod(X)
\end{equation*}
acts by $-i$ on $\gr_i^\conj R\Gamma_\dHod(X)$ since $uD$ factors through $\Fil_{i-1}^\conj R\Gamma_\dHod(X)$. Recalling that we deduced from (\ref{eq:fildhod-htcomparison1}) that the Sen operator also acts by $-i$ on $\gr_i^\conj R\Gamma_\dHod(X)$, we obtain the result.}
\end{proof}

Finally, we note that, as promised in the beginning of this section, by \cref{rem:fildhod-complexeszpntzeroexplicit}, the $\Z_p\{u, D\}/(uD-Du-1)$-module structure on conjugate-filtered diffracted Hodge cohomology supplied by \cref{prop:fildhod-comparisonsen} not only encodes the Sen operator $\Theta$, but also the fact that, for all $n\in\Z$, the endomorphism
\begin{equation*}
\Theta+n: \Fil^\conj_n R\Gamma_\dHod(X)\rightarrow \Fil^\conj_n R\Gamma_\dHod(X)
\end{equation*}
admits a factorisation through $\Fil^\conj_{n-1} R\Gamma_\dHod(X)$ as remarked in \cite[Rem.\ 4.9.10]{APC}. \EEDIT{This is due to the fact that $\Theta+n$ identifies with $(uD-n)+n=uD$, which factors canonically through $u: \Fil^\conj_{n-1} R\Gamma_\dHod(X)\rightarrow \Fil^\conj_n R\Gamma_\dHod(X)$ via $D$.} 

\section{Proof of the main result}
\label{sect:nygaardhodge}

In this section, we prove the announced stacky version of the comparison theorem \cref{thm:nygaardhodge-motivation} of Bhatt--Lurie between the Nygaard filtration on prismatic cohomology and the Hodge filtration on de Rham cohomology and derive some corollaries. In fact, we will deduce \cref{thm:nygaardhodge-main} from the following statement, which, in some sense, is the analogous assertion on the level of the associated graded:

\begin{prop}
\label{prop:nygaardhodge-graded}
The Hodge-filtered de Rham map 
\begin{equation*}
i_{\dR, +}: \Z_p^{\dR, +}\rightarrow\Z_p^\N
\end{equation*}
is an almost isomorphism over the locus $t=0$ up to $p$-isogeny. \EDIT{More precisely,} for any $E\in\Perf(\Z_p^\N)$, it induces an isomorphism
\begin{equation*}
R\Gamma((\Z_p^\N)_{t=0}, E|_{(\Z_p^\N)_{t=0}})[\tfrac{1}{p}]\cong R\Gamma(\Z_p^\Hod, i_\Hod^*E)[\tfrac{1}{p}]\;.
\end{equation*}
If the Hodge--Tate weights of $E$ are all at least \EEDIT{$-(p-1)$}, then the statement already holds integrally.
\end{prop}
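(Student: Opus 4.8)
Over the locus $t=0$ the map $i_{\dR,+}$ restricts to the Hodge map $i_\Hod\colon\Z_p^\Hod\to(\Z_p^\N)_{t=0}$, so one must show that $i_\Hod^*$ induces the stated isomorphism on cohomology. The plan is to pass to the linear-algebraic description of $\D((\Z_p^\N)_{t=0})$ furnished by \cref{lem:fildhod-complexeszpntzero} and \cref{rem:fildhod-complexeszpntzeroexplicit} and there reduce the statement to a short computation with the Sen operator. First I would record the relevant translations. Let $q\colon\A^1_+/\G_m\to(\Z_p^\N)_{t=0}$ be the fpqc cover attached to the filtered Cartier--Witt divisor constructed right before \cref{defi:fildhod-dhodconj}; inspecting the admissible sequences as in \cref{rem:fildhod-pullback}, the composite of $q$ with the inclusion $B\G_m\hookrightarrow\A^1_+/\G_m$ given by $u=0$ is exactly $i_\Hod$. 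Hence, writing $(\Fil^\conj_\bullet V, D)$ for the image of $E|_{(\Z_p^\N)_{t=0}}$ under \cref{lem:fildhod-complexeszpntzero}, so that $\Fil^\conj_\bullet V$ is the pullback $q^*(E|_{(\Z_p^\N)_{t=0}})$ under the Rees equivalence by \cref{rem:fildhod-complexeszpntzeroexplicit}, we get that $i_\Hod^*E$ is the graded complex $\bigoplus_n\gr^\conj_n V$ on $B\G_m$. As $E$ is perfect, so is $q^*(E|_{(\Z_p^\N)_{t=0}})$, hence only finitely many $\gr^\conj_n V$ are nonzero, and by \cref{defi:nygaardhodge-htweights} these $n$ are precisely the Hodge--Tate weights of $E$.

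Next I would compute both sides. By the proof of \cref{lem:fildhod-complexeszpntzero}, $\O_{(\Z_p^\N)_{t=0}}$ corresponds to the cyclic module $\Z_p\{u, D\}/(D)$ --- i.e.\ $\Z_p\langle u\rangle$ with $D$ acting as $\frac{\d}{\d u}$ --- which has the two-term free resolution given by right multiplication by $D$. Using this to compute $R\Gamma((\Z_p^\N)_{t=0}, E|_{(\Z_p^\N)_{t=0}})=\RHom(\O, E|_{(\Z_p^\N)_{t=0}})$ and taking the degree-$0$ part of the resulting graded complex should yield a natural identification
\begin{equation*}
R\Gamma((\Z_p^\N)_{t=0}, E|_{(\Z_p^\N)_{t=0}})\cong\fib\bigl(D\colon\Fil^\conj_0 V\to\Fil^\conj_{-1}V\bigr)\;,
\end{equation*}
while $R\Gamma(\Z_p^\Hod, i_\Hod^*E)=R\Gamma\bigl(B\G_m,\textstyle\bigoplus_n\gr^\conj_n V\bigr)\cong\gr^\conj_0 V=\cofib\bigl(u\colon\Fil^\conj_{-1}V\to\Fil^\conj_0 V\bigr)$. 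Unwinding the comparison map --- which is $i_\Hod^*$, equivalently $(q|_{u=0})^*$, on global sections --- it should come out as the evident composite $\fib(D\colon\Fil^\conj_0 V\to\Fil^\conj_{-1}V)\to\Fil^\conj_0 V\to\gr^\conj_0 V$.

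It then remains to check that this composite is an isomorphism after inverting $p$, and integrally under the weight hypothesis. Since the fibre of $\fib(D)\to\Fil^\conj_0 V$ is $\Fil^\conj_{-1}V[-1]$ and the fibre of $\Fil^\conj_0 V\to\gr^\conj_0 V$ is $\Fil^\conj_{-1}V$ (via $u$), the factorisation exhibits the fibre of the comparison map as $\fib(\delta\colon\Fil^\conj_{-1}V\to\Fil^\conj_{-1}V)$ for a connecting endomorphism $\delta$, which a short diagram chase identifies with $D\circ u$; by the relation $Du-uD=1$ this in turn equals $uD+1$, which is the Sen operator $\Theta$ on $\Fil^\conj_{-1}V$ by \cref{prop:fildhod-comparisonsen}. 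So it suffices to show that $\Theta$ acts invertibly on $\Fil^\conj_{-1}V$ in the relevant sense. The conjugate filtration on $\Fil^\conj_{-1}V$ is finite, and $\Theta$ acts on $\gr^\conj_n V$ by multiplication by $-n$ (read off from the Hodge--Tate comparison exactly as in the proof of \cref{prop:fildhod-comparisonsen}); a d\'evissage along this filtration therefore reduces the claim to the invertibility of each integer $-n$ with $\gr^\conj_n V\neq 0$. For $n\le -1$ the integer $-n$ is nonzero, hence a unit in $\Q_p$; and if all Hodge--Tate weights of $E$ are $\ge -(p-1)$ then such $n$ lie in $\{-(p-1),\dots,-1\}$, so that $-n\in\{1,\dots,p-1\}$ is a unit in $\Z_p$.

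The step I expect to be the main obstacle is the second paragraph: pinning down $\O_{(\Z_p^\N)_{t=0}}$ as a module over $\Z_p\{u, D\}/(Du-uD-1)$, carrying out the computation of $R\Gamma((\Z_p^\N)_{t=0},-)$, and matching the comparison map with the displayed composite all require care with the Rees and grading conventions. Once that is settled, the identification of the fibre with $\fib(\Theta\colon\Fil^\conj_{-1}V\to\Fil^\conj_{-1}V)$ and the closing d\'evissage are routine.
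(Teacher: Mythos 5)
Your proposal is correct and follows essentially the same route as the paper's proof: both identify $R\Gamma((\Z_p^\N)_{t=0},E|_{(\Z_p^\N)_{t=0}})$ with $\fib(D\colon\Fil_0V\to\Fil_{-1}V)$ and the Hodge side with $\gr_0V$, exhibit the fibre of the comparison map as $\fib(Du=uD+1\colon\Fil_{-1}V\to\Fil_{-1}V)$, and conclude by d\'evissage using that $uD-n$ induces multiplication by $-n$ on $\gr_nV$. The one small imprecision is that, for a general perfect $E$, the fact that $uD-n$ acts by $-n$ on $\gr_nV$ is justified purely by the formal observation that $D$ lowers filtration degree (as in \cref{rem:fildhod-complexeszpntzeroexplicit}), not by the Hodge--Tate comparison or \cref{prop:fildhod-comparisonsen}, which concern only the structure sheaf.
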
 

Before we can give the proof of the above statement, we need to prepare ourselves with two easy lemmas:

\begin{lem}
\label{lem:nygaardhodge-rgammazpntzero}
Let $E\in\Perf(\Z_p^\N)$ and identify its pushforward to $\A^1_-/\G_m$ \EEDIT{along} the Rees map with a filtered complex
\begin{equation*}
\begin{tikzcd}
\dots\ar[r] & M^{i+1}\ar[r] & M^i\ar[r] & M^{i-1} \ar[r] & \dots
\end{tikzcd}
\end{equation*}
Then we have
\begin{equation*}
R\Gamma((\Z_p^\N)_{t=0}, E)=M^0/M^1\;.
\end{equation*}
\end{lem}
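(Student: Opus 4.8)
The plan is to combine the Rees-theoretic description of pushforwards from $\Z_p^\N$ with the description of $(\Z_p^\N)_{t=0}$ as the $\G_m$-quotient of $\G_{a,+}^\dR$ from \cref{prop:fildhod-zpntzero}, together with the fpqc cover $\A^1_+/\G_m\to(\Z_p^\N)_{t=0}$ recalled before \cref{defi:fildhod-dhodconj}. First I would recall that, by \cref{thm:filprism-comparison} (applied with coefficients, via the projection formula for the affine morphism $\pi_{\Z_p^\N}$ — here trivial since the source is $\Z_p^\N$ itself) the pushforward $t_*E$ along the Rees map identifies under the Rees equivalence with the filtered complex $\{\cdots\to M^{i+1}\to M^i\to M^{i-1}\to\cdots\}$, i.e. $E$ corresponds under the Rees equivalence to this filtered object. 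The locus $t=0$ is by definition the closed substack of $\Z_p^\N$ cut out by the universal generalised Cartier divisor $t\colon\O(1)\to\O$ on $\A^1_-/\G_m$ pulled back along the Rees map; equivalently, $(\Z_p^\N)_{t=0}$ is the derived fibre product $\Z_p^\N\times_{\A^1_-/\G_m}(B\G_m)$ — no, more precisely the base change along $\{0\}/\G_m\hookrightarrow\A^1_-/\G_m$, i.e. the vanishing locus of the section $t$.

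The key computation is then: $R\Gamma((\Z_p^\N)_{t=0},E|_{(\Z_p^\N)_{t=0}})$ is computed by pushing forward the derived restriction of $E$ along $(\Z_p^\N)_{t=0}\to\Spf\Z_p$. Since everything in sight is computed after pushing to $\A^1_-/\G_m$ along the Rees map, and since pushforward commutes with the base change along $\{0\}/\G_m\hookrightarrow\A^1_-/\G_m$ (this is the standard base-change statement for the cartesian square of stacks, which we use without comment as in the conventions, or one can invoke that $\Z_p^\N\to\A^1_-/\G_m$ behaves well here), I would reduce the claim to the following purely Rees-theoretic statement: if a quasi-coherent complex on $\A^1_-/\G_m$ corresponds under the Rees equivalence to the filtered complex $\{M^\bullet\}$ with transition maps as displayed, then the pushforward of its derived restriction to $\{0\}/\G_m=B\G_m$, further pushed to a point, is $M^0/M^1$. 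Concretely, writing the Rees algebra picture: a filtered complex corresponds to the graded $\Z_p[t]$-module $\bigoplus_i M^i$ with $t$ the transition map shifting degree, restriction to $t=0$ tensors with $\Z_p[t]/t$, giving $\bigoplus_i M^i/M^{i+1}$ as a graded complex, i.e. the associated graded; then pushforward to $\Spf\Z_p$ from $B\G_m$ extracts the degree-$0$ piece, namely $M^0/M^1$. (One must take the derived quotient $M^0/M^1 = \Cone(M^1\xrightarrow{t} M^0)$, which is exactly what $\tensorL$ with $\Z_p[t]/t$ delivers in degree $0$.)

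The main obstacle, such as it is, is bookkeeping with the grading conventions — one needs to be careful that $t$ has grading degree $-1$ (the $\A^1_-/\G_m$ convention fixed in the Notation section), so that multiplication by $t$ is the map $M^{i+1}\to M^i$ raising the filtration index, and hence $\gr^0 = \Cone(M^1\xrightarrow{t}M^0)=M^0/M^1$ rather than $M^1/M^0$ or some shift thereof — and that taking $(-)_{t=0}$ really is the derived vanishing locus, so that $M^0/M^1$ is the derived cofibre. I would also remark that perfectness of $E$ is not strictly needed for this lemma (it is used in the surrounding \cref{prop:nygaardhodge-graded}), but is harmless to assume. Everything else is formal, using only the Rees equivalence and base change along $\{0\}/\G_m\hookrightarrow\A^1_-/\G_m$, both already invoked freely in the paper.
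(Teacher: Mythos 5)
Your argument is correct and is essentially the paper's proof: both reduce the statement to base change for the cartesian square over $B\G_m\hookrightarrow\A^1_-/\G_m$ and then read off $M^0/M^1$ as the degree-zero piece of the associated graded via the Rees dictionary. The one caveat is that this base change is not covered by the paper's blanket convention (which concerns squares induced by maps of formal schemes $X\to Y$); the paper justifies it by the argument of \cite[Prop.\ 3.3.5]{FGauges} (essentially the projection formula against the perfect complex $\Cone(\O(1)\xrightarrow{t}\O)$), so you should invoke that rather than treat it as automatic.
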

\begin{proof}
\EDIT{As pullback along $B\G_m\rightarrow\A^1_-/\G_m$ corresponds to passage to the associated graded under the Rees equivalence}, we have 
\begin{equation*}
M^0/M^1\cong R\Gamma(B\G_m, (t_* E)|_{B\G_m})\;,
\end{equation*}
so we have to prove that the cartesian diagram
\begin{equation*}
\begin{tikzcd}
(\Z_p^\N)_{t=0}\ar[r]\ar[d, "t", swap] & \Z_p^\N\ar[d, "t"] \\
B\G_m\ar[r] & \A^1_-/\G_m
\end{tikzcd}
\end{equation*}
satisfies base change for perfect complexes, i.e.\ that
\begin{equation*}
(t_*E)|_{B\G_m}\cong t_*E|_{(\Z_p^\N)_{t=0}}
\end{equation*}
via the natural map. However, this follows from the argument given in the proof of \cite[Prop.\ 3.3.5]{FGauges}.
\end{proof}

\begin{cor}
\label{cor:nygaardhodge-fibreseq}
In the situation of \cref{lem:nygaardhodge-rgammazpntzero}, identify $E|_{(\Z_p^\N)_{t=0}}$ with a filtered object $\Fil_\bullet V$ together with an operator $D$ as in \cref{rem:fildhod-complexeszpntzeroexplicit}. Then there is a natural fibre sequence
\begin{equation*}
\begin{tikzcd}
M^0/M^1\ar[r] & \Fil_0 V\ar[r, "D"] & \Fil_{-1} V\;.
\end{tikzcd}
\end{equation*}
\end{cor}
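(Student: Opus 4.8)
The plan is to combine \cref{lem:nygaardhodge-rgammazpntzero}, which already identifies $R\Gamma((\Z_p^\N)_{t=0}, E)$ with $M^0/M^1$, with a direct computation of $R\Gamma((\Z_p^\N)_{t=0}, E)$ in terms of the $\Z_p\{u, D\}/(Du-uD-1)$-module data attached to $E|_{(\Z_p^\N)_{t=0}}$ by \cref{lem:fildhod-complexeszpntzero} and \cref{rem:fildhod-complexeszpntzeroexplicit}. It therefore suffices to produce a natural fibre sequence
\begin{equation*}
R\Gamma((\Z_p^\N)_{t=0}, E)\longrightarrow\Fil_0 V\xrightarrow{D}\Fil_{-1} V\;.
\end{equation*}

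To do so, I would exploit the description $(\Z_p^\N)_{t=0}\cong\G_{a,+}^\dR/\G_m = \G_{a,+}/(\G_a^\sharp\rtimes\G_m)$ from \cref{prop:fildhod-zpntzero} and compute the pushforward of $E$ to $\Spf\Z_p$ by peeling off the two group quotients. Factoring the structure map of $(\Z_p^\N)_{t=0}$ as $\G_{a,+}^\dR/\G_m\to B\G_m\to\Spf\Z_p$, the pushforward to $\Spf\Z_p$ is obtained by first pushing forward along $\G_{a,+}^\dR/\G_m\to B\G_m$, which produces the graded complex $R\Gamma(\G_a^\dR, E')$ (where $E'$ is the pullback of $E$ to $\G_a^\dR = \G_a/\G_a^\sharp$, with its residual $\G_m$-action), and then taking the weight-$0$ part. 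For the first step I would use that $\G_a = \Spf\Z_p\langle u\rangle\to\G_a^\dR$ is a $\G_a^\sharp$-torsor with affine total space — this is precisely the cover underlying the equivalence \eqref{eq:fildhod-complexeszpntzero} in the proof of \cref{lem:fildhod-complexeszpntzero} — so that $R\Gamma(\G_a^\dR, E')$ is computed by derived $\G_a^\sharp$-invariants of the underlying $\Z_p\langle u\rangle$-complex $N$ of $E$. By the Cartier-duality description recalled in that proof, a $\G_a^\sharp$-action on $N$ is the same datum as an operator $D$ on $N$ that is locally nilpotent mod $p$, and since the trivial comodule $\Z_p$ has a length-one resolution by free modules over the corresponding (power-series) ring, these derived invariants collapse to the two-term complex $[N\xrightarrow{D}N]$ — the same mechanism that underlies \cref{prop:fildhod-bgmsharp}. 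By \cref{rem:fildhod-complexeszpntzeroexplicit} we have $N=\Rees(\Fil_\bullet V)$, so the weight-$0$ part of the source of $[N\xrightarrow{D}N]$ is $\Fil_0 V$; since $D$ has grading degree $-1$, the target acquires a shift by one in weight, so its weight-$0$ part is $\Fil_{-1} V$.

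Taking weight-$0$ parts thus gives the natural fibre sequence $R\Gamma((\Z_p^\N)_{t=0}, E)\to\Fil_0 V\xrightarrow{D}\Fil_{-1} V$, and combining it with \cref{lem:nygaardhodge-rgammazpntzero} yields the claimed sequence $M^0/M^1\to\Fil_0 V\xrightarrow{D}\Fil_{-1} V$; naturality in $E$ is automatic since each step of the construction is functorial. The points that require care are the grading bookkeeping — in particular checking that the degree-$(-1)$ operator $D$ puts $\Fil_{-1} V$, and not $\Fil_1 V$, in the target — and the (routine) verification that the derived $\G_a^\sharp$-invariants really reduce to a two-term complex; the latter is no more than the computation already carried out in the proofs of \cref{prop:fildhod-bgmsharp} and \cref{lem:fildhod-complexeszpntzero}, so no new input is needed.
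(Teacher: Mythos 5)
Your proposal is correct and follows essentially the same route as the paper: the paper's proof simply reads off $R\Gamma((\Z_p^\N)_{t=0}, E|_{(\Z_p^\N)_{t=0}})\cong\fib(\Fil_0 V\xrightarrow{D}\Fil_{-1} V)$ from \cref{lem:fildhod-complexeszpntzero} and combines this with \cref{lem:nygaardhodge-rgammazpntzero}. The only difference is that you explicitly unwind the derived $\G_a^\sharp$-invariants and the weight-$0$ bookkeeping behind that identification, a detail the paper leaves implicit; your unwinding is accurate.
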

\begin{proof}
By \cref{lem:fildhod-complexeszpntzero}, we have
\begin{equation*}
R\Gamma((\Z_p^\N)_{t=0}, E|_{(\Z_p^\N)_{t=0}})\cong\fib(\Fil_0 V\xrightarrow{D}\Fil_{-1} V)\;.
\end{equation*}
Combining this with \cref{lem:nygaardhodge-rgammazpntzero} yields the claim.
\end{proof}

\begin{rem}
In light of the comparisons from Chapter \ref{sect:conjdhod}, one should view the above statement as a version of \cite[Rem.\ 5.5.8]{APC} with coefficients. 
\end{rem}

We are now ready to prove \cref{thm:nygaardhodge-main} and \cref{prop:nygaardhodge-graded}:

\begin{proof}[Proof of \cref{prop:nygaardhodge-graded}]
Consider the commutative diagram
\begin{equation}
\label{eq:nygaardhodge-graded}
\begin{tikzcd}
\Z_p^\dR\ar[r]\ar[d, "i_\dR", swap] & \Z_p^{\dR, +}\ar[d, "i_{\dR, +}"] & \Z_p^\Hod\ar[d, "i_\Hod"]\ar[l] \\
\Z_p^\prism\ar[r, "j_\dR"]\ar[d] & \Z_p^\N\ar[d, "t"] & (\Z_p^\N)_{t=0}\ar[d]\ar[l] \\
\Spf\Z_p\ar[r] & \A^1_-/\G_m & B\G_m\ar[l]\nospacepunct{\;,}
\end{tikzcd}
\end{equation}
where the vertical compositions are all identities since the Hodge-filtered de Rham map $i_{\dR, +}$ is a section of the Rees map $t$. We identify $t_*E$ with a filtered complex
\begin{equation*}
\begin{tikzcd}
\dots\ar[r] & M^{i+1}\ar[r] & M^i\ar[r] & M^{i-1}\ar[r] & \dots
\end{tikzcd}
\end{equation*}
via the Rees equivalence and use \cref{lem:fildhod-complexeszpntzero} to identify $E|_{(\Z_p^\N)_{t=0}}$ with a filtered complex $\Fil_\bullet V$ together with an operator $D: \Fil_\bullet V\rightarrow\Fil_{\bullet-1} V$ satisfying the \EEDIT{compatibility} properties discussed \EEDIT{in \cref{rem:fildhod-complexeszpntzeroexplicit}}; finally, we let $i_{\dR, +}^* E$ correspond to the filtered complex $\Fil^\bullet F$ under the Rees equivalence. Using \cref{lem:nygaardhodge-rgammazpntzero}, we see that
\begin{equation*}
R\Gamma((\Z_p^\N)_{t=0}, E|_{(\Z_p^\N)_{t=0}})\cong M^0/M^1\;, \hspace{0.5cm} R\Gamma(\Z_p^\Hod, i_\Hod^*E)\cong \Fil^0 F/\Fil^1 F
\end{equation*}
and thus, we have to examine the fibre of the map $M^0/M^1\rightarrow \Fil^0 F/\Fil^1 F$ induced by the Hodge-filtered de Rham map $i_{\dR, +}$. To this end, first observe that the commutativity of the top right square of (\ref{eq:nygaardhodge-graded}) implies that there is an isomorphism $\Fil^0 F/\Fil^1 F\cong \Fil_0 V/\Fil_{-1} V$. Thus, using \cref{cor:nygaardhodge-fibreseq}, we obtain a commutative diagram of fibre sequences
\begin{equation*}
\begin{tikzcd}
& M^0/M^1\ar[r]\ar[d] & \Fil^0 F/\Fil^1 F\ar[d, "\cong"] \\
\Fil_{-1} V\ar[r, "u"]\ar[d, "Du", swap] & \Fil_0 V\ar[d, "D"]\ar[r] & \Fil_0 V/\Fil_{-1} V\ar[d] \\
\Fil_{-1} V\ar[r, equals] & \Fil_{-1} V\ar[r] & 0\nospacepunct{\;,}
\end{tikzcd}
\end{equation*}
from which we conclude that
\begin{equation*}
\fib(M^0/M^1\rightarrow \Fil^0 F/\Fil^1 F)\cong\fib(\Fil_{-1} V\xrightarrow{Du} \Fil_{-1} V)\cong \fib(\Fil_{-1} V\xrightarrow{uD+1} \Fil_{-1} V)\;,
\end{equation*}
\EEDIT{where the last step is due to $Du=uD+1$.}
As $E$ is perfect, we have $\Fil_i V=0$ for $i\ll 0$ and thus, \EEDIT{by the commutative diagram (\ref{eq:fildhod-complexeszpntzeroexplicit}),} we are reduced to showing that \EEDIT{the operators}
\begin{equation*}
uD-i: \Fil_i V\rightarrow \Fil_i V
\end{equation*}
induce $p$-isogenies on $\gr_i V$ for all $i\leq -1$ and isomorphisms for \EEDIT{$-(p-1)\leq i\leq -1$}. However, this is clear since \EEDIT{$uD-i$} acts by multiplication by \EEDIT{$-i$} on $\gr_i V$.
\end{proof}

\begin{proof}[Proof of \cref{thm:nygaardhodge-main}]
We use the notations from the previous proof. As
\begin{equation*}
R\Gamma(\Z_p^\N, E)\cong M^0\;, \hspace{0.5cm} R\Gamma(\Z_p^\prism, j_\dR^*E)\cong M^{-\infty}\;,
\end{equation*}
where $M^{-\infty}$ denotes the $p$-completion of $\colim_i M^{-i}$, and
\begin{equation*}
R\Gamma(\Z_p^{\dR, +}, i_{\dR, +}^*E)\cong \Fil^0 F\;, \hspace{0.5cm} R\Gamma(\Z_p^\dR, i_\dR^*E)\cong F\;,
\end{equation*}
where $F$ denotes the $p$-completion of $\colim_i \Fil^{-i} F$, we have to examine the map $F/\Fil^0 F\rightarrow M^{-\infty}/M^0$ induced by $i_{\dR, +}$. Since $E$ is perfect, the filtrations $\Fil^\bullet F$ and $M^\bullet$ eventually stabilise, i.e.\ we have $F=\Fil^i F$ and $M^{-\infty}=M^i$ for $i\ll 0$, and thus we are reduced to showing that the maps $\gr^i F\rightarrow M^i/M^{i+1}$ are $p$-isogenies for all $i\leq -1$ and isomorphisms if $E$ has Hodge--Tate weights all at least \EEDIT{$-p$}. However, note that, using \cref{lem:nygaardhodge-rgammazpntzero}, we have
\begin{equation*}
\EEDIT{
\begin{split}
R\Gamma((\Z_p^\N)_{t=0}, (E\tensor t^*\O(i))|_{(\Z_p^\N)_{t=0}})&\cong M^i/M^{i+1} \\
R\Gamma(\Z_p^\Hod, (i_{\dR, +}^*(E\tensor t^*\O(i)))|_{\Z_p^\Hod})&\cong \gr^i F
\end{split}
}
\end{equation*}
and if $E$ has Hodge--Tate weights all at least \EEDIT{$-p$}, then \EEDIT{$E\tensor t^*\O(i)$} has Hodge--Tate weights all at least \EEDIT{$-(p+i)$}, so we are done by \cref{prop:nygaardhodge-graded}.
\end{proof}

Finally, we derive some easy corollaries of \cref{thm:nygaardhodge-main}, the first of which is \cref{cor:nygaardhodge-coeffs} already stated in the introduction.

\begin{proof}[Proof of \cref{cor:nygaardhodge-coeffs}]
Using the fact that pushforward along $X^\N\rightarrow\Z_p^\N$ preserves perfect complexes if $X$ is smooth and proper, see \cite[Prop.\ B.0.1]{EtaleCrystalline}, the statement is an immediate consequence of \cref{thm:nygaardhodge-main} once we know that pushforward along $X^\N\rightarrow\Z_p^\N$ does not decrease the smallest Hodge--Tate weight of a gauge; \EDIT{however, this follows from} \cref{rem:fildrstack-hodstackcoh}.
\end{proof}

Finally, observe that \cref{thm:nygaardhodge-main} and \cref{prop:nygaardhodge-graded} also apply to certain non-perfect gauges $E$: Namely, going through the proof, one sees that all we have used about $E$ is that it is bounded below with respect to the standard $t$-structure, that the associated decreasing filtrations $\Fil^\bullet F$ and $M^\bullet$ eventually stabilise and that the terms of the increasing filtration $\Fil_\bullet V$ vanish in sufficiently negative degrees. We can use this observation to recover \cref{thm:nygaardhodge-motivation} from the beginning:

\begin{proof}[Proof of \cref{thm:nygaardhodge-motivation}]
Consider the gauge \EEDIT{$E=\H_\N(X)(i)$}, where the twist is pulled back from $\A^1_-/\G_m$ via the Rees map $t: \Z_p^\N\rightarrow\A^1_-/\G_m$. Noting that $i_{\dR, +}^*E, j_\dR^*E$ and $i_\dR^*E$ identify with \EEDIT{$\H_{\dR, +}(X)(i), \H_\prism(X)$} and $\H_\dR(X)$, respectively, we see that, by the comparisons from \cref{thm:fildrstack-comparison}, \cref{thm:prismatisation-comparison} and \cref{thm:filprism-comparison}, it suffices to show that \cref{thm:nygaardhodge-main} also applies to $E$. However, note that these comparison results also imply that the filtrations $\Fil^\bullet F$ and $M^\bullet$ identify (up to a shift of degrees) with the Hodge and Nygaard filtration, respectively, so they indeed eventually stabilise. Finally, to prove that the terms of $\Fil_\bullet V$ vanish in sufficiently negative degrees, we observe that this filtration identifies (up to a shift of degrees) with $\Fil_\bullet^\conj\Omega_X^\dHod$ by \cref{prop:fildhod-comparisonsen}, whose negative terms are all zero. As $E$ is clearly bounded below, we are done.
\end{proof}

\appendix
\section{The conjugate-filtered Hodge--Tate stack}

In this appendix, we describe a stack $X^{\HT, c}$ attached to any $p$-adic formal scheme $X$ which is equipped with a map to $\A^1_+/\G_m$ such that the pushforward of $\O_{X^{\HT, c}}$ to $\A^1_+/\G_m$ identifies with the (absolute) conjugate-filtered Hodge--Tate cohomology of $X$ under the Rees equivalence.

\comment{recall from \cite[Constr.\ 3.6.1]{APC} that there is a ``Frobenius'' $F: \Z_p^\prism\rightarrow\Z_p^\prism$ given by taking a Cartier--Witt divisor $I\xrightarrow{\alpha} W(S)$ and pulling it back along the Frobenius endomorphism of $W(S)$. Moreover, this induces an endomorphism $F: X^\prism\rightarrow X^\prism$, see \cite[Rem.\ 3.6]{PFS}. Also recall that the Nygaard-filtered prismatisation $X^\N$ is equipped with a map $\pi_X: X^\N\rightarrow X^\prism$ called the \emph{structure map}, which is given by sending a filtered Cartier--Witt divisor $M\xrightarrow{d} W$ with associated map of admissible sequences
\begin{equation*}
\begin{tikzcd}
0\ar[r] & \V(L_M)^\sharp\ar[d, "\sharp(d)"]\ar[r] & M\ar[d, "d"]\ar[r] & F_*M'\ar[d, "F_*(d')"]\ar[r] & 0 \\
0\ar[r] & \G_a^\sharp\ar[r] & W\ar[r] & F_*W\ar[r] & 0
\end{tikzcd}
\end{equation*}
to the Cartier--Witt divisor giving rise to the map $d': M'\rightarrow W$.

\begin{defi}
The \emph{conjugate-filtered Hodge--Tate stack} of $X$ is the stack $X^{\HT, c}$ defined by the pullback diagram
\begin{equation*}
\begin{tikzcd}
X^{\HT, c}\ar[r]\ar[d] & (X^\N)_{t=0}\ar[d, "\pi_X"] \\
X^\prism\ar[r, "F"] & X^\prism\nospacepunct{\;.}
\end{tikzcd}
\end{equation*}
\end{defi}

We first explain how to obtain the desired map $X^{\HT, c}\rightarrow\A^1_+/\G_m$ and for this,}

To define $X^{\HT, c}$, it will be easiest to invoke the alternative description of $\Z_p^\N$ recently given by Gardner--Madapusi in \cite[§6.4]{GardnerMadapusi}. Namely, they show that one may also obtain $\Z_p^\N$ as the pullback
\begin{equation*}
\begin{tikzcd}
\Z_p^\N\ar[r, "\pi"]\ar[d, "{(t, u)}", swap] & \Z_p^\prism\ar[d] \\
\A^1_-/\G_m\times (\A^1_+/\G_m)^\dR\ar[r] & (\A^1_-/\G_m)^\dR\nospacepunct{\;,}
\end{tikzcd}
\end{equation*}
where the two nonlabelled maps are given as follows:
\begin{enumerate}[label=(\roman*)]
\item The bottom map sends a pair $(L_-\rightarrow S, \G_a^\dR(S)\rightarrow L_+)$ of maps of invertible $S$-modules or $\G_a^\dR(S)$-modules, respectively, to the map
\begin{equation*}
L_-\tensor_S L_+^{-1}\rightarrow S\tensor_S \G_a^\dR(S)\cong \G_a^\dR(S)
\end{equation*}
of invertible $\G_a^\dR(S)$-modules.

\item The map on the right sends a Cartier--Witt divisor $I\rightarrow W(S)$ to the generalised Cartier divisor $F_*I\tensor_{F_*W(S)} \G_a^\dR(S)\rightarrow \G_a^\dR(S)$ on $\G_a^\dR(S)$ given by pulling back $F_*I$ along the natural map $F_*W(S)\rightarrow (F_*W/^\mathbb{L} p)(S)\cong\G_a^\dR(S)$, see \cite[Cor.\ 2.6.8]{FGauges} for the last isomorphism.
\end{enumerate}

While the maps $t$ and $\pi$ are the familiar Rees and structure maps, respectively, the map $u$ is not as easy to construct from the moduli interpretation $\Z_p^\N$ using filtered Cartier--Witt divisors. The rough idea is as follows: fpqc-locally, any admissible $W$-module $M$ arises from the standard exact sequence \cref{lem:admissible-fundamentalseq} via pushout along an endomorphism of $\G_a^\sharp$, see \cite[Lem.\ 5.2.8]{FGauges}; moreover, $\sHom_W(\G_a^\sharp, \G_a^\sharp)\cong\G_a$ and two elements of $\G_a$ yield the same admissible $W$-module $M$ if and only if they differ by an element of $\G_a^\sharp$, see \cite[Prop.\ 5.2.1]{FGauges}. Thus, fpqc-locally, any admissible $W$-module $M$ gives rise to a section of $\G_a^\dR$ and this defines the map $u$.

\comment{
\begin{lem}
\label{lem:conjHT-jHT}
There is a pullback square
\begin{equation*}
\begin{tikzcd}
\Z_p^\prism\ar[r, "F"]\ar[d] & \Z_p^\prism\ar[d] \\
\A^1_-/\G_m\ar[r] & (\A^1_-/\G_m)^\dR\nospacepunct{\;.}
\end{tikzcd}
\end{equation*}
\end{lem}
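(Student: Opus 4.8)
The plan is to deduce the square from the Gardner--Madapusi presentation of $\Z_p^\N$ recalled just above, by base-changing that cartesian square along a single point of the de Rham stack $(\A^1_+/\G_m)^\dR$. First I would identify the unlabelled bottom arrow $\A^1_-/\G_m\to(\A^1_-/\G_m)^\dR$ as the canonical (``de Rham unit'') map, and observe that it is obtained from the Gardner--Madapusi bottom map (i) by restricting its second argument to the $\Z_p$-point $e\colon\Spf\Z_p\to(\A^1_+/\G_m)^\dR$ which is the de Rham image of the open substack $\G_m/\G_m=\Spf\Z_p\hookrightarrow\A^1_+/\G_m$ (the locus where the universal section $u$ is invertible): feeding a trivialised line $\G_a^\dR(S)\xrightarrow{\ \mathrm{id}\ }\G_a^\dR(S)$ into the recipe $(L_-\to S,\,\G_a^\dR(S)\to L_+)\mapsto(L_-\tensor_S L_+^{-1}\to\G_a^\dR(S))$ precisely sends a generalised Cartier divisor $L_-\to S$ on $S$ to its base change $L_-\tensor_S\G_a^\dR(S)\to\G_a^\dR(S)$ along $S\to\G_a^\dR(S)$. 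Since the Gardner--Madapusi square is cartesian, base change along $\A^1_-/\G_m\times\{e\}\hookrightarrow\A^1_-/\G_m\times(\A^1_+/\G_m)^\dR$ then produces a cartesian square
\begin{equation*}
\begin{tikzcd}
\Z_p^\N|_{u\text{-inv}}\ar[r, "\pi"]\ar[d] & \Z_p^\prism\ar[d] \\
\A^1_-/\G_m\ar[r] & (\A^1_-/\G_m)^\dR
\end{tikzcd}
\end{equation*}
whose right and bottom maps are already those in the statement and in which $\Z_p^\N|_{u\text{-inv}}\coloneqq u^{-1}(e)$ is the open substack of $\Z_p^\N$ on which $u$ is invertible.

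It therefore remains to exhibit an isomorphism $\Z_p^\N|_{u\text{-inv}}\cong\Z_p^\prism$ under which $\pi$ becomes the Frobenius $F$. I would do this by unwinding the moduli description of $\Z_p^\N$ through filtered Cartier--Witt divisors: invertibility of $u$ forces the admissible $W$-module $M$ underlying a filtered Cartier--Witt divisor $M\xrightarrow{d}W$ to be the pushout of the fundamental sequence $0\to\G_a^\sharp\to W\to F_*W\to 0$ of \cref{lem:admissible-fundamentalseq} along a \emph{unit} in $\sHom_W(\G_a^\sharp,\G_a^\sharp)\cong\G_a$, so that $M\cong W$; and the structure map $\pi$, which records the Cartier--Witt divisor underlying $d'\colon M'\to W$, then factors through the Frobenius twist carried by the $F_*$ in that sequence --- equivalently, by the identification $(F_*W/^{\mathbb{L}}p)(S)\cong\G_a^\dR(S)$ of \cite[Cor.\ 2.6.8]{FGauges} that enters the Gardner--Madapusi map (ii) --- which is exactly the assertion that $\pi|_{u\text{-inv}}=F$. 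Alternatively, one may recognise $\Z_p^\N|_{u\text{-inv}}$ as the ``second chart'' of the Rees-type stack $\Z_p^\N$, the first being $j_\dR\colon\Z_p^\prism\hookrightarrow\Z_p^\N$, with the comparison between the two charts implemented by the divided Frobenius exactly as in the Rees-algebra description of the Nygaard filtration; see \cite[Ch.\ 5]{FGauges} and \cite[§5.5]{APC}.

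I expect this last identification --- verifying that $\pi|_{u\text{-inv}}$ is genuinely the Frobenius, rather than some anonymous isomorphism --- to be the only real obstacle, since it requires faithfully tracking the Frobenius twists hidden in the $F_*$-notation for admissible $W$-modules and in the comparison $(F_*W/^{\mathbb{L}}p)(S)\cong\G_a^\dR(S)$. Once it is established, cartesianness of the square in the statement is immediate from the base-change step above.
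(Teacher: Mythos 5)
Your proposal is correct and follows essentially the same route as the paper: both deduce the square from the Gardner--Madapusi presentation of $\Z_p^\N$ by restricting to the locus where $u$ is invertible and then identifying that locus with $\Z_p^\prism$ sitting over $F$ via $\pi$, which is precisely the content of the open immersion $j_\HT\colon\Z_p^\prism\hookrightarrow\Z_p^\N$ living over the Frobenius --- the paper simply cites \cite[Constr.\ 5.3.2]{FGauges} and \cite[Def.\ 6.5.2]{GardnerMadapusi} for exactly the Frobenius-twist bookkeeping you flag as the remaining obstacle. The paper also records an alternative argument, which you do not mention, deducing the statement from the pullback square of group schemes $W\xrightarrow{F}F_*W$ over $\G_a\rightarrow\G_a^\dR$ induced by the exact sequence of \cref{lem:admissible-fundamentalseq}.
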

\begin{proof}
This is just a reformulation of the fact that there is an open immersion $j_\HT: \Z_p^\prism\hookrightarrow\Z_p^\N$ living over $F: \Z_p^\prism\rightarrow\Z_p^\prism$, see \cite[Constr.\ 5.3.2]{FGauges}, using the description of $j_\HT$ in the framework of Gardner--Madapusi, see \cite[Def.\ 6.5.2]{GardnerMadapusi}. Alternatively, the statement can also be obtained from the pullback square 
\begin{equation*}
\begin{tikzcd}
W\ar[r, "F"]\ar[d] & F_*W\ar[d] \\
\G_a\ar[r] & \G_a^\dR
\end{tikzcd}
\end{equation*}
induced by the exact sequence from \cref{lem:admissible-fundamentalseq}.
\end{proof}
}

With this description of $\Z_p^\N$ in place, we can define the stack $X^{\HT, c}$ as follows:

\begin{defi}
The \emph{conjugate-filtered Hodge--Tate stack} of $X$ is the stack $X^{\HT, c}$ defined by the pullback diagram
\begin{equation*}
\begin{tikzcd}
X^{\HT, c}\ar[r]\ar[d] & (X^\N)_{t=0}\ar[d, "u_X"] \\
\A^1_+/\G_m\ar[r] & (\A^1_+/\G_m)^\dR\nospacepunct{\;,}
\end{tikzcd}
\end{equation*}
where $u_X$ denotes the composition $(X^\N)_{t=0}\rightarrow (\Z_p^\N)_{t=0}\xrightarrow{u} (\A^1_+/\G_m)^\dR$.
\end{defi}

\begin{prop}
\label{prop:conjHT-zphtc}
We have 
\begin{equation*}
\Z_p^{\HT, c}\cong \A^1_+/\G_m\times B\G_m^\sharp\;.
\end{equation*}
\end{prop}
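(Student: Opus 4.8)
The plan is to unwind the definition of $\Z_p^{\HT,c}$ into a fibre product of quotient stacks and then compute it formally. Specialising the defining pullback square to $X=\Spf\Z_p$ (so that $(X^\N)_{t=0}=(\Z_p^\N)_{t=0}$ and $u_X=u$) gives
\[
\Z_p^{\HT,c}\;\cong\;(\A^1_+/\G_m)\times_{(\A^1_+/\G_m)^\dR}(\Z_p^\N)_{t=0},
\]
where the left-hand map is the canonical map $Y\to Y^\dR$ and the right-hand map is the Gardner--Madapusi map $u$. First I would feed in the descriptions $(\Z_p^\N)_{t=0}\cong\G_{a,+}^\dR/\G_m$ from \cref{prop:fildhod-zpntzero} and $(\A^1_+/\G_m)^\dR\cong\G_{a,+}^\dR/\G_m^\dR$ from the Gardner--Madapusi picture (see \cite[§6.4]{GardnerMadapusi} and \cite[§6.4]{FGauges}), where $\G_m^\dR$ denotes the de Rham stack of $\G_m$ and $q\colon\G_m\twoheadrightarrow\G_m^\dR$ is the quotient by the divided-power subgroup $\G_m^\sharp\cong\G_a^\sharp$. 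Under these identifications, $u$ is induced by the identity of $\G_{a,+}^\dR$ together with $q$, while the canonical map $\A^1_+/\G_m=\G_{a,+}/\G_m\to\G_{a,+}^\dR/\G_m^\dR$ is induced by $\G_{a,+}\xrightarrow{\can}\G_{a,+}^\dR$ together with $q$.

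The remaining computation is then formal and repeatedly uses the identity $[X/G]\cong[X/H]\times_{BH}BG$, valid for any homomorphism $G\to H$ with $G$ acting on $X$ through it (by induction of torsors). Taking $X=\G_{a,+}^\dR$, $G=\G_m$ and $H=\G_m^\dR$ gives $(\Z_p^\N)_{t=0}\cong(\A^1_+/\G_m)^\dR\times_{B\G_m^\dR}B\G_m$, with $u$ corresponding to the first projection; substituting this into the fibre product above and cancelling the two copies of $(\A^1_+/\G_m)^\dR$ yields $\Z_p^{\HT,c}\cong(\A^1_+/\G_m)\times_{B\G_m^\dR}B\G_m$, the structure map $\A^1_+/\G_m\to B\G_m^\dR$ being the canonical map followed by $(\A^1_+/\G_m)^\dR\to B\G_m^\dR$. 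By naturality of $Y\to Y^\dR$, this map factors as $\A^1_+/\G_m\to B\G_m\xrightarrow{Bq}B\G_m^\dR$, the first arrow being the structure map $[\G_{a,+}/\G_m]\to B\G_m$; hence $\Z_p^{\HT,c}\cong(\A^1_+/\G_m)\times_{B\G_m}\bigl(B\G_m\times_{B\G_m^\dR}B\G_m\bigr)$. Finally, since $\G_m^\sharp=\ker q$, the map $Bq$ is a $B\G_m^\sharp$-gerbe, so $B\G_m\times_{B\G_m^\dR}B\G_m$ is a $B\G_m^\sharp$-gerbe over $B\G_m$ admitting the diagonal as a section; as $\G_m^\sharp$ is abelian it is therefore trivial, equal to $B\G_m\times B\G_m^\sharp$ with the evident projection to $B\G_m$. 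Thus $\Z_p^{\HT,c}\cong(\A^1_+/\G_m)\times B\G_m^\sharp$; recall $B\G_m^\sharp\cong\Z_p^\HT$ by \cite[Thm.\ 3.4.13]{APC}.

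The step I expect to be the main obstacle is the explicit identification of the map $u$ under the presentation of \cref{prop:fildhod-zpntzero}: Gardner--Madapusi construct $u$ only fpqc-locally, via the classification of admissible $W$-modules and the resulting $\G_a$-valued endomorphism of $\G_a^\sharp$ (well-defined modulo $\G_a^\sharp$ and up to scaling), so matching it with the evident quotient map $\G_{a,+}^\dR/\G_m\to\G_{a,+}^\dR/\G_m^\dR$ requires carefully comparing moduli descriptions along the lines of the proof of \cite[Prop.\ 5.3.7]{FGauges}. Once this is pinned down, the rest is routine manipulation of quotient stacks.
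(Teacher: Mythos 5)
Your argument is correct and takes essentially the same route as the paper's proof: both identify the right-hand vertical map of the defining square as the $\G_m^\sharp$-gerbe $\G_{a,+}^\dR/\G_m\rightarrow\G_{a,+}^\dR/\G_m^\dR$ using \cref{prop:fildhod-zpntzero} together with the fact that de Rham stacks commute with quotients, and both then conclude by observing that the pulled-back gerbe is split by the canonical map $\A^1_+/\G_m\rightarrow\G_{a,+}^\dR/\G_m\cong(\Z_p^\N)_{t=0}$ and hence trivial, since the band $\G_m^\sharp$ is abelian. The paper is merely terser, leaving the torsor-induction bookkeeping and the moduli-theoretic identification of $u$ implicit — the latter being exactly the point you flag as the main remaining obstacle.
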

\begin{proof}
By definition, the stack $\Z_p^{\HT, c}$ fits into a pullback diagram
\begin{equation*}
\begin{tikzcd}
\Z_p^{\HT, c}\ar[r]\ar[d] & (\Z_p^\N)_{t=0}\ar[d, "u"] \\
\A^1_+/\G_m\ar[r] & (\A^1_+/\G_m)^\dR\nospacepunct{\;.}
\end{tikzcd}
\end{equation*}
As $\G_m^\dR\cong \G_m/\G_m^\sharp$ and taking de Rham stacks commutes with quotients, the map on the right is a $\G_m^\sharp$-gerbe by \cref{prop:fildhod-zpntzero} and hence the same is true for the map on the left. However, the gerbe on the left is split: using \cref{prop:fildhod-zpntzero} once again, the splitting is induced by the natural map $\A^1_+/\G_m\rightarrow \G_{a, +}^\dR/\G_m\cong (\Z_p^\N)_{t=0}$.
\end{proof}

\comment{
Now the desired map $X^{\HT, c}\rightarrow\A^1_+/\G_m$ is obtained as follows: by functoriality of the construction, there is a map $X^{\HT, c}\rightarrow\Z_p^{\HT, c}$ and then we postcompose this with the natural projection $\Z_p^{\HT, c}\cong \A^1_+/\G_m\times B\G_m^\sharp\rightarrow\A^1_+/\G_m$.
}

\begin{cor}
\label{cor:conjHT-complexeszphtc}
There is an equivalence of categories
\begin{equation*}
\D(\Z_p^{\HT, c})\cong \widehat{\DF}_{(\Theta^p-\Theta)-\nilp}(\Z_p[\Theta])\;,
\end{equation*}
where the right-hand side denotes the full subcategory of $\widehat{\DF}(\Z_p[\Theta])$ consisting of objects on which $\Theta^p-\Theta$ acts locally nilpotently mod $p$. Moreover, given $E\in\D(\Z_p^{\HT, c})$ which identifies with $\Fil_\bullet M\in\widehat{\DF}_{(\Theta^p-\Theta)-\nilp}(\Z_p[\Theta])$, the pushforward of $E$ to $\A^1_+/\G_m$ identifies with $(\Fil_\bullet M)^{\Theta=0}$ under the Rees equivalence (here, the kernel is to be taken in the derived sense).
\end{cor}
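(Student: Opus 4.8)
The plan is to derive both claims from the product decomposition $\Z_p^{\HT, c}\cong\A^1_+/\G_m\times B\G_m^\sharp$ of \cref{prop:conjHT-zphtc}, combined with the Rees equivalence $\D(\A^1_+/\G_m)\cong\widehat{\DF}(\Z_p)$ and the linear-algebraic description $\D(B\G_m^\sharp)\cong\widehat{\D}_{(\Theta^p-\Theta)-\nilp}(\Z_p[\Theta])$ from \cref{prop:fildhod-bgmsharp}. Writing the product as the quotient $(\A^1_+/\G_m)/\G_m^\sharp$ for the trivial $\G_m^\sharp$-action, descent along the quotient map $\A^1_+/\G_m\to\Z_p^{\HT, c}$ identifies $\D(\Z_p^{\HT, c})$ with the category of objects of $\D(\A^1_+/\G_m)$ equipped with an action of $\G_m^\sharp$. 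First I would apply the Rees equivalence to turn such an object into a filtered $p$-complete $\Z_p$-complex $\Fil_\bullet M$, and then run the Cartier-duality analysis from the proof of \cref{prop:fildhod-bgmsharp} in families over $\A^1_+/\G_m$ so as to turn the $\G_m^\sharp$-action into an operator $\Theta$ on $\Fil_\bullet M$ with $\Theta^p-\Theta$ locally nilpotent mod $p$; crucially, since the $\G_m^\sharp$-descent datum is a structure on the quasi-coherent complex over $\A^1_+/\G_m$ itself, the operator $\Theta$ automatically preserves the filtration, so one indeed lands in $\widehat{\DF}_{(\Theta^p-\Theta)-\nilp}(\Z_p[\Theta])$.

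For the pushforward, the key observation is that, under this description, the structure map $\Z_p^{\HT, c}\to\A^1_+/\G_m$ classifies $\G_m^\sharp$-invariants: its pushforward computes $R\Gamma(\G_m^\sharp, -)$ applied to an object of $\D(\A^1_+/\G_m)$ equipped with $\G_m^\sharp$-action. Translating through \cref{prop:fildhod-bgmsharp}, whose equivalence is given by pullback along the quotient map $\Spf\Z_p\to B\G_m^\sharp$: the pullback of a $p$-complete complex $N$ along $B\G_m^\sharp\to\Spf\Z_p$ corresponds to $N$ equipped with $\Theta=0$, and a short adjunction computation then shows that the right adjoint sends $(N,\Theta)$ to $\fib(\Theta\colon N\to N)$, since a $\Z_p[\Theta]$-linear map out of an object on which $\Theta$ vanishes is the same as a $\Z_p$-linear map into $\ker\Theta$ — derived, into $\fib(\Theta)$. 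Feeding in $N=\Rees(\Fil_\bullet M)$ with its filtered operator $\Theta$, and using that forming $\fib(\Theta)$ is compatible with the Rees equivalence, I conclude that the pushforward of $E$ to $\A^1_+/\G_m$ corresponds under the Rees equivalence to $\fib(\Theta\colon\Fil_\bullet M\to\Fil_\bullet M)=(\Fil_\bullet M)^{\Theta=0}$.

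The main obstacle is the technical point underlying the first paragraph: one must verify that the Cartier-duality description of $\D(B\G_m^\sharp)$ from \cref{prop:fildhod-bgmsharp} propagates to a description of $\G_m^\sharp$-equivariant objects of $\D(\A^1_+/\G_m)$, i.e.\ that the argument given there works verbatim in families; alternatively, if one prefers to phrase the category equivalence via a Künneth decomposition $\D(\Z_p^{\HT, c})\cong\D(\A^1_+/\G_m)\widehat{\tensor}_{\widehat{\D}(\Z_p)}\D(B\G_m^\sharp)$, one has to check that this is valid in the $p$-complete derived setting, which holds since both factors are quotients of (formal) affines by (formal) affine group schemes. Both are routine, and once they are in place the remaining steps, including the pushforward formula, are formal manipulations with adjunctions and the Rees equivalence.
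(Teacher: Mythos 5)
Your proposal is correct and follows essentially the same route as the paper, which deduces the statement from the product decomposition of \cref{prop:conjHT-zphtc} together with a relative version of \cref{prop:fildhod-bgmsharp} and the fixed-point formula $R\Gamma(B\G_m^\sharp,(N,\Theta))\cong\fib(\Theta\colon N\to N)$ of \cite[Prop.\ 3.5.11]{APC}. Your adjunction computation of the pushforward is precisely the content of that cited result, so the two arguments coincide in substance.
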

\begin{proof}
Using \cref{prop:conjHT-zphtc}, this is a variant of \cref{prop:fildhod-bgmsharp} and \cite[Prop.\ 3.5.11]{APC}.
\end{proof}

\begin{prop}
\label{prop:conjHT-main}
Let $X$ be a bounded $p$-adic formal schemes and assume that $X$ is $p$-quasisyntomic and qcqs. Then the pushforward of $\O_{X^{\HT, c}}$ along $\pi_{X^{\HT, c}}: X^{\HT, c}\rightarrow\Z_p^{\HT, c}$ identifies with the conjugate-filtered diffracted Hodge cohomology equipped with its Sen operator under the equivalence from \cref{cor:conjHT-complexeszphtc}.
\end{prop}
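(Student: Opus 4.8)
The plan is to mimic the proof of \cref{thm:fildhod-comparisonfiltered}: reduce to the quasiregular semiperfectoid case, then invoke \cref{prop:fildhod-comparisonsen}, all the while carrying along the extra $B\G_m^\sharp$-factor of $\Z_p^{\HT,c}$ that records the Sen operator. First I would observe that, pasting the defining pullback square of $X^{\HT,c}$ with that of $\Z_p^{\HT,c}$ and using that $(X^\N)_{t=0}\to(\A^1_+/\G_m)^\dR$ factors through $(\Z_p^\N)_{t=0}$, one obtains a cartesian square
\begin{equation*}
\begin{tikzcd}
X^{\HT,c}\ar[r]\ar[d, "\pi_{X^{\HT,c}}", swap] & (X^\N)_{t=0}\ar[d] \\
\Z_p^{\HT,c}\ar[r, "g"] & (\Z_p^\N)_{t=0}\;.
\end{tikzcd}
\end{equation*}
Since $(X^\N)_{t=0}$ is a base change of $X^\N$, which takes quasisyntomic covers to fpqc covers by \cite[Cor.\ 6.12.8]{GardnerMadapusi}, the same is true for $X\mapsto(X^\N)_{t=0}$, hence for $X\mapsto X^{\HT,c}$ by base change, and so $\pi_{X^{\HT,c},*}\O_{X^{\HT,c}}$ satisfies quasisyntomic descent. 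On the other hand, the object of $\D(\Z_p^{\HT,c})$ corresponding under \cref{cor:conjHT-complexeszphtc} to $\Fil^\conj_\bullet R\Gamma_\dHod(X)$ with its Sen operator satisfies quasisyntomic descent by \cite[Rem.\ 4.7.9]{APC} and functoriality of the Sen operator. We may therefore assume $X=\Spf R$ with $R$ quasiregular semiperfectoid.

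In this case, \cref{prop:fildhod-dhodconjqrsp} and \cref{rem:fildhod-pullback} show that $(R^\N)_{t=0}\to(\Z_p^\N)_{t=0}$ becomes affine after the fpqc base change $\A^1_+/\G_m\to(\Z_p^\N)_{t=0}$ (where it is identified with $\Spf\Rees(\Fil^\conj_\bullet\widehat\Omega^\dHod_R)/\G_m\to\A^1_+/\G_m$), and hence is itself affine, affineness being fpqc-local on the target. Since pushforward along an affine morphism commutes with base change, the square above gives
\begin{equation*}
\pi_{R^{\HT,c},*}\O_{R^{\HT,c}}\cong g^*\bigl(\pi_{(R^\N)_{t=0},*}\O_{(R^\N)_{t=0}}\bigr)\;,
\end{equation*}
and by \cref{prop:fildhod-comparisonsen} the object $\pi_{(R^\N)_{t=0},*}\O_{(R^\N)_{t=0}}\in\D((\Z_p^\N)_{t=0})$ is, under \cref{lem:fildhod-complexeszpntzero}, the filtered complex $\Fil^\conj_\bullet\widehat\Omega^\dHod_R$ equipped with an operator $D$ such that $uD-i$ is the Sen operator on $\Fil^\conj_i\widehat\Omega^\dHod_R$ for all $i$.

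It remains to compute the pullback $g^*\colon\D((\Z_p^\N)_{t=0})\to\D(\Z_p^{\HT,c})$ under the equivalences of \cref{lem:fildhod-complexeszpntzero} and \cref{cor:conjHT-complexeszphtc}, and this is where I expect the real work to lie. The claim is that $g^*$ carries a pair $(\Fil_\bullet V, D)$ to the filtered complex $\Fil_\bullet V$ together with the operator $\Theta$ whose restriction to $\Fil_i V$ is $uD-i$; using $D(uv)=uD(v)+v$ one checks, as in \cref{rem:fildhod-complexeszpntzeroexplicit}, that these restrictions are compatible with the filtration maps, so that such a $\Theta$ is well-defined, and $\Theta^p-\Theta$ is locally nilpotent mod $p$ because this may be checked on the graded pieces, where $\Theta$ acts by an integer. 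To prove that $g^*$ really is this functor I would unwind the identification $\Z_p^{\HT,c}\cong\A^1_+/\G_m\times B\G_m^\sharp$ of \cref{prop:conjHT-zphtc}: the composite of $g$ with the section $\A^1_+/\G_m\hookrightarrow\Z_p^{\HT,c}$ is the fpqc cover $\A^1_+/\G_m\to(\Z_p^\N)_{t=0}$, so restriction along it shows that the underlying filtered complex of $g^*E$ agrees with that of $E$, while the $B\G_m^\sharp$-factor, which records $\Theta$, is matched against the twisted $\G_a^\sharp$-action on $(\Z_p^\N)_{t=0}=\G_{a,+}/(\G_a^\sharp\rtimes\G_m)$ encoded by $D$ — the effect of passing through $g$ on the weight-$i$ part being exactly the shift $D\rightsquigarrow uD-i$, which is the same bookkeeping already carried out in the proof of \cref{prop:fildhod-comparisonsen}. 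Alternatively, since both $g^*$ and the candidate functor are exact and colimit-preserving and $\D((\Z_p^\N)_{t=0})$ is generated under colimits and shifts by the pushforwards along the cover $\A^1_+/\G_m\to(\Z_p^\N)_{t=0}$ of twists of $\O$, it suffices to compare the two functors on those generators, which comes down to a direct computation with the explicit filtered Cartier--Witt divisor defining the cover. Granting this, the three steps combine to identify $\pi_{R^{\HT,c},*}\O_{R^{\HT,c}}$ with $\Fil^\conj_\bullet R\Gamma_\dHod(R)$ and its Sen operator, proving the proposition.
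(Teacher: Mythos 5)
Your overall skeleton matches the paper's: base change along the cartesian square relating $X^{\HT,c}$ to $(X^\N)_{t=0}$ reduces everything to computing the pullback $g^*$ of $\pi_{X^\N,*}\O_{(X^\N)_{t=0}}$, \cref{prop:fildhod-comparisonsen} supplies the input, and restricting along the section $\A^1_+/\G_m\hookrightarrow\Z_p^{\HT,c}$ identifies the underlying filtered object with $\Fil^\conj_\bullet R\Gamma_\dHod(X)$. Your detour through the quasiregular semiperfectoid case is not needed: \cref{prop:fildhod-comparisonsen} is already stated for general $p$-quasisyntomic qcqs $X$, and the paper simply invokes base change for the square directly (as announced in the conventions); your affineness argument is a legitimate way to justify that base change, but it costs an extra descent step on both sides for no real gain.

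The genuine gap is the step you yourself flag: identifying the operator $\Theta$ produced by $g^*$ with $uD-i$ on $\Fil_i$. Neither of your two strategies is carried out, and the first ("the same bookkeeping as in \cref{prop:fildhod-comparisonsen}") is not a proof: that proposition identifies $uD-i$ with the Sen operator on the $(\Z_p^\N)_{t=0}$ side, but says nothing about how $g^*$ transforms the $\G_a^\sharp$-equivariance datum into a $\G_m^\sharp$-equivariance datum. (The generator argument is also delicate: two colimit-preserving functors agreeing objectwise on generators need not be naturally isomorphic.) The paper closes this point as follows: since the conjugate filtration is complete, it suffices to identify $\Theta$ on the associated graded, i.e.\ to show that $\Theta$ acts by $-i$ on $\gr^\conj_i R\Gamma_\dHod(X)$; for this one observes that the composite $B\G_m\times B\G_m^\sharp\rightarrow\A^1_+/\G_m\times B\G_m^\sharp\rightarrow\G_{a,+}^\dR/\G_m$ factors as $B\G_m\times B\G_m^\sharp\rightarrow B\G_m\times B\G_m\rightarrow B\G_m\rightarrow\G_{a,+}^\dR/\G_m$ with the middle arrow the tensor product of line bundles, so the degree-$i$ graded piece acquires a twist by the pullback of $\O(i)$ along $B\G_m^\sharp\rightarrow B\G_m$, which under \cref{prop:fildhod-bgmsharp} is the multiplication-by-$i$ operator; combined with the known action of the Sen operator on $\gr^\conj_i$, this finishes the proof. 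Your own remark that the candidate $\Theta$ is determined on graded pieces by completeness is exactly what legitimises this reduction, so the cleanest repair of your argument is to replace "compute $g^*$ as a functor" by that graded computation.
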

\begin{proof}
Using base change for the cartesian square
\begin{equation*}
\begin{tikzcd}
X^{\HT, c}\ar[r]\ar[d] & (X^\N)_{t=0}\ar[d] \\
\Z_p^{\HT, c}\ar[r] & (\Z_p^\N)_{t=0}\nospacepunct{\;,}
\end{tikzcd}
\end{equation*}
we know that $\pi_{X^{\HT, c}, *}\O_{X^{\HT, c}}$ is the pullback of $\pi_{X^\N, *}\O_{(X^\N)_{t=0}}$ along the map 
\begin{equation*}
\Z_p^{\HT, c}\cong \A^1_+/\G_m\times B\G_m^\sharp\rightarrow \G_{a, +}^\dR/\G_m\cong (\Z_p^\N)_{t=0}\;.
\end{equation*}
Moreover, recall from \cref{prop:fildhod-comparisonsen} that $\pi_{X^\N, *}\O_{(X^\N)_{t=0}}$ identifies with $\Fil^\conj_\bullet R\Gamma_\dHod(X)$ equipped with operators $D: \Fil^\conj_\bullet R\Gamma_\dHod(X)\rightarrow\Fil^\conj_{\bullet-1} R\Gamma_\dHod(X)$ such that $uD-i: \Fil^\conj_i R\Gamma_\dHod(X)\rightarrow\Fil^\conj_i R\Gamma_\dHod(X)$ agrees with the Sen operator, where $u: \Fil^\conj_\bullet R\Gamma_\dHod(X)\rightarrow\Fil^\conj_{\bullet+1} R\Gamma_\dHod(X)$ denotes the transition maps.

Using the commutative diagram
\begin{equation*}
\begin{tikzcd}
& \A^1_+/\G_m\ar[rd]\ar[ld] & \\
\A^1_+/\G_m\times B\G_m^\sharp\ar[rr] & & \G_{a, +}^\dR/\G_m\nospacepunct{\;,}
\end{tikzcd}
\end{equation*}
where all maps are the canonical ones, we thus first conclude that the pullback of $\pi_{X^{\HT, c}, *}\O_{X^{\HT, c}}$ to $\A^1_+/\G_m$ identifies with $\Fil^\conj_\bullet R\Gamma_\dHod(X)$ under the Rees equivalence. It thus only remains to identify the operator $\Theta$ coming from \cref{cor:conjHT-complexeszphtc} with the Sen operator. To do this, we may reduce to the associated graded using the fact that the filtration $\Fil^\conj_\bullet R\Gamma_\dHod(X)$ is complete. By the previous paragraph, we thus have to check that $\Theta$ acts by $-i$ on $\gr^\conj_i R\Gamma_\dHod(X)$.

However, for this, note that the composition
\begin{equation*}
B\G_m\times B\G_m^\sharp\rightarrow \A^1_+/\G_m\times B\G_m^\sharp\rightarrow \G_{a, +}^\dR/\G_m
\end{equation*}
may alternatively be factored as
\begin{equation*}
B\G_m\times B\G_m^\sharp\rightarrow B\G_m\times B\G_m\rightarrow B\G_m\rightarrow \G_{a, +}^\dR/\G_m\nospacepunct{\;,}
\end{equation*}
where the middle arrow takes a pair $(L, L')$ of invertible $S$-modules and sends it to $L\tensor_S L'$; in other words, the pullback of $\O(1)$ along the middle arrow is given by $\O(1)\boxtimes\O(1)$. As the pullback of $\pi_{X^\N, *}\O_{(X^\N)_{t=0}}$ to $B\G_m$ identifies with $\gr^\conj_\bullet R\Gamma_\dHod(X)$ by what we already know and the pullback of $\O(i)$ along $B\G_m^\sharp\rightarrow B\G_m$ corresponds to $\Z_p$ equipped with the multiplication-by-$i$-map under the equivalence from \cref{prop:fildhod-bgmsharp}, we are done.
\end{proof}

\begin{cor}
The pushforward of $\O_{X^{\HT, c}}$ to $\A^1_+/\G_m$ identifies with the conjugate-filtered Hodge--Tate cohomology $\Fil_\bullet^\conj R\Gamma_{\ol{\prism}}(X)$ of $X$ under the Rees equivalence.
\end{cor}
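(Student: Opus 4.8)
The plan is to combine \cref{prop:conjHT-main} and \cref{cor:conjHT-complexeszphtc} with the description of diffracted Hodge cohomology recalled in Section \ref{subsect:review-dhod}, namely that it is the linear-algebra datum over $\Z_p^\HT\cong B\G_m^\sharp$ underlying conjugate-filtered Hodge--Tate cohomology. First I would note that the map $X^{\HT, c}\to\A^1_+/\G_m$ factors through $\pi_{X^{\HT, c}}\colon X^{\HT, c}\to\Z_p^{\HT, c}$ followed by the structural projection $\Z_p^{\HT, c}\to\A^1_+/\G_m$, which by \cref{prop:conjHT-zphtc} is the projection onto the first factor of $\A^1_+/\G_m\times B\G_m^\sharp$; hence the pushforward of $\O_{X^{\HT, c}}$ to $\A^1_+/\G_m$ is obtained by first pushing forward to $\Z_p^{\HT, c}$ and then along this projection. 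By \cref{prop:conjHT-main}, the complex $\pi_{X^{\HT, c}, *}\O_{X^{\HT, c}}\in\D(\Z_p^{\HT, c})$ corresponds, under the equivalence $\D(\Z_p^{\HT, c})\cong\widehat{\DF}_{(\Theta^p-\Theta)-\nilp}(\Z_p[\Theta])$ of \cref{cor:conjHT-complexeszphtc}, to $\Fil_\bullet^\conj R\Gamma_\dHod(X)$ equipped with its Sen operator, so the ``moreover'' part of \cref{cor:conjHT-complexeszphtc} identifies its pushforward to $\A^1_+/\G_m$ with the derived $\Theta$-invariants $(\Fil_\bullet^\conj R\Gamma_\dHod(X))^{\Theta=0}=\fib\bigl(\Theta\colon\Fil_\bullet^\conj R\Gamma_\dHod(X)\to\Fil_\bullet^\conj R\Gamma_\dHod(X)\bigr)$ under the Rees equivalence.

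It then remains to identify $(\Fil_\bullet^\conj R\Gamma_\dHod(X))^{\Theta=0}$ with $\Fil_\bullet^\conj R\Gamma_{\ol{\prism}}(X)$. For $X=\Spf R$ affine this is essentially the construction recalled in Section \ref{subsect:review-dhod}: there, $\Fil_\bullet^\conj\widehat{\Omega}_R^\dHod$ together with its Sen operator was defined as the image of the filtered complex $\Fil_\bullet^\conj\H_{\ol{\prism}}(R)$ on $\Z_p^\HT\cong B\G_m^\sharp$ under the equivalence of \cref{prop:fildhod-bgmsharp}, which is pullback along the quotient map $\Spf\Z_p\to B\G_m^\sharp$; since the global sections functor $R\Gamma(B\G_m^\sharp, -)$ corresponds under this equivalence to taking derived $\Theta$-invariants (see \cite[Prop.\ 3.5.11]{APC}), we obtain $\Fil_\bullet^\conj R\Gamma_{\ol{\prism}}(R)=R\Gamma(\Z_p^\HT, \Fil_\bullet^\conj\H_{\ol{\prism}}(R))\cong(\Fil_\bullet^\conj R\Gamma_\dHod(R))^{\Theta=0}$. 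For general $X$ (which, as in \cref{prop:conjHT-main}, we take to be qcqs and $p$-quasisyntomic), I would pass to this case by quasisyntomic descent: both $\Fil_\bullet^\conj R\Gamma_{\ol{\prism}}(-)$ and $(\Fil_\bullet^\conj R\Gamma_\dHod(-))^{\Theta=0}$ satisfy quasisyntomic descent --- the latter because $\Fil_\bullet^\conj R\Gamma_\dHod(-)$ does, see \cite[Rem.\ 4.7.9]{APC}, and forming the fibre of an endomorphism commutes with limits --- and on a quasiregular semiperfectoid ring everything is concentrated in degree zero by \cref{rem:fildhod-degzero}, so the identification above applies there verbatim.

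The genuinely non-formal point is this last identification, and in particular checking that the conjugate filtration on absolute Hodge--Tate cohomology --- which for non-smooth $X$ is only accessible via quasisyntomic descent --- is compatible with the formation of derived $\Theta$-invariants. Everything else is a direct consequence of \cref{prop:conjHT-main} and \cref{cor:conjHT-complexeszphtc}; the descent reduction to quasiregular semiperfectoid rings, where the conjugate filtration is the one appearing explicitly in the recollections of Section \ref{subsect:review-dhod}, is what makes the compatibility transparent.
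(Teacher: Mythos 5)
Your proposal is correct and follows essentially the same route as the paper: combine \cref{prop:conjHT-main} with the ``moreover'' clause of \cref{cor:conjHT-complexeszphtc} to reduce everything to the identification $\Fil_\bullet^\conj R\Gamma_{\ol{\prism}}(X)\cong (\Fil_\bullet^\conj R\Gamma_\dHod(X))^{\Theta=0}$. The only difference is that the paper simply cites \cite[Rem.\ 4.7.5]{APC} for this last identity, whereas you re-derive it by unwinding the construction of $\Fil^\conj_\bullet\widehat{\Omega}_R^\dHod$ from $\Fil^\conj_\bullet\H_{\ol{\prism}}(R)$ via \cref{prop:fildhod-bgmsharp} and the fact that $R\Gamma(B\G_m^\sharp,-)$ computes derived $\Theta$-invariants --- which is a valid (if slightly longer) way to obtain the same input.
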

\begin{proof}
This now follows by combining \cref{cor:conjHT-complexeszphtc} and \cref{prop:conjHT-main} with the fact that
\begin{equation*}
\Fil_\bullet^\conj R\Gamma_{\ol{\prism}}(X)\cong (\Fil_\bullet^\conj R\Gamma_\dHod(X))^{\Theta=0}\;,
\end{equation*}
see \cite[Rem.\ 4.7.5]{APC}.
\end{proof}

\bibliographystyle{amsalpha}
\bibliography{References}
\end{document}